\theoremstyle{plain}
\newtheorem{theorem}{Theorem}[section]
\newtheorem{lemma}[theorem]{Lemma}
\newtheorem{proposition}[theorem]{Proposition}
\newtheorem{corollary}[theorem]{Corollary}
\newtheorem*{subfield-problem}{Subfield problem of a generic polynomial}
\newtheorem*{isomorphism-problem}{Field isomorphism problem of a generic polynomial}
\newtheorem*{theorem-nonumber}{Theorem}
\newtheorem*{corollary-nonumber}{Corollary}
\theoremstyle{definition}
\newtheorem*{definition}{Definition}
\newtheorem{example}[theorem]{Example}
\newtheorem{remark}[theorem]{Remark}
\newtheorem*{acknowledgment}{Acknowledgment}
\newcommand{\bs}{\mathbf{s}}\newcommand{\bt}{\mathbf{t}}
\newcommand{\ba}{\mathbf{a}}\newcommand{\bb}{\mathbf{b}}
\newcommand{\bx}{\mathbf{x}}\newcommand{\by}{\mathbf{y}}
\newcommand{\As}{A_\mathbf{s}}\newcommand{\Bs}{B_\mathbf{s}}
\newcommand{\Cs}{C_\mathbf{s}}\newcommand{\Ds}{D_\mathbf{s}}
\newcommand{\At}{A_\mathbf{t}}\newcommand{\Bt}{B_\mathbf{t}}
\newcommand{\Dt}{D_\mathbf{t}}
\newcommand{\Aa}{A_\mathbf{a}}\newcommand{\Ba}{B_\mathbf{a}}
\newcommand{\Ca}{C_\mathbf{a}}\newcommand{\Da}{D_\mathbf{a}}
\newcommand{\Ab}{A_\mathbf{b}}\newcommand{\Bb}{B_\mathbf{b}}
\newcommand{\Db}{D_\mathbf{b}}
\newcommand{\Dels}{\Delta_\mathbf{s}}\newcommand{\Delt}{\Delta_\mathbf{t}}
\newcommand{\Dela}{\Delta_\mathbf{a}}\newcommand{\Delb}{\Delta_\mathbf{b}}
\newcommand{\betas}{\beta_\mathbf{s}}\newcommand{\betat}{\beta_\mathbf{t}}
\newcommand{\betaa}{\beta_\mathbf{a}}\newcommand{\Es}{E_\mathbf{s}}
\newcommand{\Gs}{G_\mathbf{s}}\newcommand{\Gt}{G_\mathbf{t}}
\newcommand{\Gst}{G_{\mathbf{s},\mathbf{t}}}
\newcommand{\Gab}{G_{\mathbf{a},\mathbf{b}}}
\title[Geometric framework for the subfield problem]{A geometric framework for 
the subfield problem of generic polynomials via Tschirnhausen transformation}
\thanks{This work was partially supported by Grant-in-Aid for Scientific 
Research (C) 19540057 of Japan Society for the Promotion of Science}
\author{Akinari Hoshi and Katsuya Miyake}
\subjclass[2000]{Primary 11R16, 12E25, 12F10, 12F12, 12F20.}
\begin{document}
\maketitle
\begin{abstract}
Let $k$ be an arbitrary field. 
We study a general method to solve the subfield problem of generic polynomials for the 
symmetric groups over $k$ via Tschirnhausen transformation. 
Based on the general result in the former part, we give an explicit solution to 
the field isomorphism problem and the subfield problem of cubic generic polynomials 
for $\frak{S}_3$ and $C_3$ over $k$. 
As an application of the cubic case, we also give several sextic generic polynomials over $k$. 
\end{abstract}
\section{Introduction}\label{seIntro}
Let $k$ be a fixed base field of arbitrary characteristic and $G$ a finite group. 
Let $k(\bt)$ be the rational function field over $k$ with $m$ variables $\bt=(t_1,\ldots,t_m)$. 
A polynomial $F(t_1,\ldots,t_m;X)\in k(\bt)[X]$ is called $k$-generic for $G$ if the Galois 
group of $F(\bt;X)$ over $k(\bt)$ is isomorphic to $G$ and every $G$-Galois extension $L/M$ 
with $M\supset k$ and $\# M=\infty$ can be obtained as $L=\mathrm{Spl}_M F(\ba;X)$, 
the splitting field of $F(\ba;X)$ over $M$, for some $\ba=(a_1,\ldots,a_m)\in M^m$. 
By Kemper's Theorem \cite{Kem01}, furthermore, every $H$-Galois extension for a subgroup 
$H$ of $G$ over an infinite field $M$ is also given by a specialization of $F(\bt;X)$. 
Examples of generic polynomials for various $G$ are found, for example, in \cite{JLY02}. 
We also give several sextic $k$-generic polynomials in Section 6. 
The fact that a $k$-generic polynomial for $G$ covers all $H$-Galois extensions ($H\subset G$) 
over $M\supset k$ by specializing parameters naturally raises a problem; namely, 
\begin{subfield-problem}
Let $F(\bt;X)$ be a $k$-generic polynomial for $G$. 
For a field $M\supset k$ and $\ba,\bb\in M^m$, determine whether $\mathrm{Spl}_M F(\bb;X)$ 
is a subfield of $\mathrm{Spl}_M F(\ba;X)$ or not.
\end{subfield-problem}
When we restrict ourselves to $G$-Galois extensions over $M$ for a fixed group $G$, 
we are to consider a special case of the subfield problem: 
\begin{isomorphism-problem}
Determine whether $\mathrm{Spl}_M F(\ba;X)$ and 
$\mathrm{Spl}_M F(\bb;X)$ are isomorphic over $M$ or not 
for $\ba,\bb\in M^m$. 
\end{isomorphism-problem}

In this paper, we develop a method to solve the field isomorphism problem of 
$k$-generic polynomial for the symmetric group $\frak{S}_n$ of degree $n$ 
via Tschirnhausen transformation. 

In Section 2, we investigate a geometric interpretation of Tschirnhausen transformations. 
Our main idea, which we describe here briefly, is as follows: 
Let $f(X)$ and $g(X)$ be monic separable polynomials of degree $n$ in $k[X]$ for a field $k$ 
with roots $\alpha_1,\ldots,\alpha_n$ and $\beta_1,\ldots,\beta_n$, respectively, 
in a fixed algebraic closure of $k$. 
A polynomial $g(X)\in k[X]$ is called a Tschirnhausen transformation of $f(X)$ over 
$M$ $(\supset k)$ if $g(X)$ is of the form 
\begin{align}
g(X)=\prod_{i=1}^n 
\bigl(X-(c_0+c_1\alpha_i+\cdots+c_{n-1}\alpha_i^{n-1})\bigr),\quad c_i \in M.\label{eqci}
\end{align}

Two polynomials $f(X)$ and $g(X)$ in $k[X]$ are Tschirnhausen equivalent over $M$ 
if they are Tschirnhausen transformations over $M$ of each other. 
For two irreducible separable polynomials $f(X)$, $g(X)\in k[X]$, the following two 
conditions are equivalent: 
(i) $f(X)$ and $g(X)$ are Tschirnhausen equivalent over $M$; 
(ii) the quotient fields $M[X]/(f(X))$ and $M[X]/(g(X))$ are isomorphic over $M$. 

Now we replace the roots $\bm{\alpha}=(\alpha_1,\ldots,\alpha_n)$ and 
$\bm{\beta}=(\beta_1,\ldots,\beta_n)$ by independent variables 
$\bx=(x_1,\ldots,x_n)$ and $\by=(y_1,\ldots,y_n)$ respectively. 
If we take $u_i:=u_i(\bx,\by)\in M[\bx,\by,\Dels^{-1}]$ as 
\begin{align*}
u_i(\mathbf{x},\mathbf{y})=\Dels^{-1}\cdot\mathrm{det}
\left(\begin{array}{cccccccc}
1 & x_1 & \cdots & x_1^{i-1} & y_1 & x_1^{i+1} & \cdots & x_1^{n-1}\\ 
1 & x_2 & \cdots & x_2^{i-1} & y_2 & x_2^{i+1} & \cdots & x_2^{n-1}\\
\vdots & \vdots & & \vdots & \vdots & \vdots & & \vdots\\
1 & x_n & \cdots & x_n^{i-1} & y_n & x_n^{i+1} & \cdots & x_n^{n-1}
\end{array}\right)
\end{align*}
where $\Dels=\prod_{1\leq i<j\leq n} (x_j-x_i)$, then $u_0,\ldots,u_{n-1}$ 
correspond to the coefficients $c_0,\ldots,c_{n-1}$ of a Tschirnhausen transformation from 
$f(X)$ to $g(X)$ as in (\ref{eqci}). 
Let $\frak{S}_n\times \frak{S}_n$ act on $\{\bx,\by\}$ by permuting variables. 
There are $n!$ conjugates of $u_i$ under the action of $\frak{S}_n\times \frak{S}_n$ because 
the stabilizer $H:=\mathrm{Stab}_{\frak{S}_n\times \frak{S}_n}(u_i)$ of $u_i$ in 
$\frak{S}_n\times \frak{S}_n$ is the diagonal subgroup of $\frak{S}_n\times \frak{S}_n$ 
with $H\cong \frak{S}_n$. 
Let $s_i$ (resp. $t_i$) be the $i$-th elementary symmetric function in $n$ 
variables $\bx$ (resp. $\by$). We put $K:=k(\bs,\bt)=k(s_1,\ldots,s_n,t_1,\ldots,t_n)$, 
$L_\bs:=K(\bx)$ and $L_\bt:=K(\by)$. 
Then we have, for each $i$, $0\leq i\leq n-1$, the following basic properties:
\begin{align*}
&{\rm (i)}\quad \ \ 
(L_\bs L_\bt)^{g^{-1}Hg}=K(u_0^g,\ldots,u_{n-1}^g)=K(u_i^g)\ \ \textrm{for}\ \ g\in \Gst,\\
&{\rm (ii)}\quad \ \, L_\bs\cap K(u_i^g)=L_\bt\cap K(u_i^g)=K\ \ \textrm{for}\ \ g\in \Gst,\\
&{\rm (iii)}\quad \ L_\bs L_\bt=L_\bs(u_i^g)=L_\bt(u_i^g)\ \ \textrm{for}\ \ g\in \Gst,\\
&{\rm (iv)}\quad \ L_\bs L_\bt=K(u_i^g\ |\ \overline{g}\in H\backslash \Gst).
\end{align*}

In Section 3, we study a specialization of parameters 
$(\bs,\bt)\mapsto (\ba,\bb)\in M^n\times M^n$ of polynomials 
$f_n(\bs;X)=\prod_{i=1}^n(X-x_i)$ and $f_n(\bt;X)=\prod_{i=1}^n(X-y_i)$. 
We always assume that such a specialization $(\bs,\bt)\mapsto (\ba,\bb)\in M^n\times M^n$ 
satisfy the condition $\Dela\cdot\Delb\neq 0$, i.e. both of $f_n(\ba;X)\in M[X]$ and 
$f_n(\bb;X)\in M[X]$ are separable. 
For $i$, $0\leq i\leq n-1$, we take a polynomial 
\begin{align*}
F_i(\ba,\bb;X)\, 
&:=\, \prod_{\overline{g}\in H\backslash \frak{S}_n\times\frak{S}_n} 
(X-u_i^g(\bm{\alpha},\bm{\beta}))
\, =\, \prod_{\overline{g}\in H\backslash \frak{S}_n\times\frak{S}_n} 
(X-c_i^g)\in M[X],
\end{align*}
of degree $n!$. 
Therefore we obtain the following: 
\begin{theorem-nonumber}[Theorem \ref{throotf}]
For a fixed $j, 0\leq j \leq n-1$, and $\ba,\bb \in M^n$ with 
$\Delta_\ba\cdot\Delta_\bb\neq 0$, assume that the polynomial $F_j(\ba,\bb;X)$ has no 
multiple root. Then $F_j(\ba,\bb;X)$ has a root in $M$ if and only if $M[X]/(f_n(\ba;X))$ 
and $M[X]/(f_n(\bb;X))$ are $M$-isomorphic. 
\end{theorem-nonumber}
\begin{corollary-nonumber}[Corollary \ref{cor1}]
Let $j$ and $\ba,\bb \in M^n$ be as above. 
Assume that both of $\mathrm{Gal}(f_n(\ba;X)/M)$ and $\mathrm{Gal}(f_n(\bb;X)/M)$ 
are isomorphic to a transitive subgroup $G$ of $\frak{S}_n$ and that all subgroups of $G$ 
with index $n$ are conjugate in $G$. 
Then $F_j(\ba,\bb;X)$ has a root in $M$ if and only if 
$\mathrm{Spl}_M f_n(\ba;X)$ and $\mathrm{Spl}_M f_n(\bb;X)$ coincide. 
\end{corollary-nonumber}

In Sections 4 and 5, based on the general result in Sections 2 and 3, we give an explicit 
solution to the field isomorphism problem and the subfield problem of $k$-generic polynomials 
for $\frak{S}_3$ and for $C_3$. 
Here we display some results to $k$-generic polynomials $g^{S_3}(s;X):=X^3+sX+s$ for $S_3$ 
and $g^{C_3}(s;X):=X^3-sX^2-(s+3)X-1$ for $C_3$. 

For $g^{S_3}(s;X)$, we take coefficients $c_0,c_1,c_2$ of a Tschirnhausen transformation 
from $g^{S_3}(a;X)$ to $g^{S_3}(b;X)$ and we put $u:=3c_1/c_2$. 
Then we get $H(a,b;X):=(a-b)\cdot \prod_{\overline{g}\in H\backslash 
\frak{S}_n\times\frak{S}_n}(X-u^g)$ where 
\[
H(a,b;X)=a(X^2+9X-3a)^3-b(X^3-2aX^2-9aX-2a^2-27a)^2.
\]
\begin{theorem-nonumber}[Theorem \ref{thS3}]
Assume that char $k\neq 3$. 
For $a,b\in M$ with $a\neq b$, the decomposition type 
of irreducible factors $h_{\mu}(X)$ of $H(a,b;X)$ over $M$ gives 
an answer to the subfield problem of $X^3+sX+s$ as on Table $1$ in Section 4. 
In particular, two splitting fields of $X^3+aX+a$ and of $X^3+bX+b$ 
over $M$ coincide if and only if there exists $u\in M$ such that 
\[
b=\frac{a(u^2+9u-3a)^3}{(u^3-2au^2-9au-2a^2-27a)^2}.
\]
\end{theorem-nonumber}

For $g^{C_3}(s;X)$, we obtain the following theorem which is an analogue 
to the results of Morton \cite{Mor94} and Chapman \cite{Cha96}. 
\begin{theorem-nonumber}[Theorem \ref{thC3}]
Assume that char $k\neq 2$. 
For $m,n\in M$, two splitting fields of $X^3-mX^2-(m+3)X-1$ and of $X^3-nX^2-(n+3)X-1$ 
over $M$ coincide if and only if there exists $z\in M$ such that either 
\begin{align*}
n\,=\,\frac{m(z^3-3z-1)-9z(z+1)}{mz(z+1)+z^3+3z^2-1}\ \mathit{or}\ 
n\,=\,-\frac{m(z^3+3z^2-1)+3(z^3-3z-1)}{mz(z+1)+z^3+3z^2-1}.
\end{align*}
\end{theorem-nonumber}
By applying Hilbert's irreducibility theorem (cf. for example \cite[Chapter 3]{JLY02}) 
and Siegel's theorem for curves of genus $0$ (cf. \cite[Theorem 6.1]{Lan78}, 
\cite[Chapter 8, Section 5]{Lan83}, \cite[Theorem D.8.4]{HS00}) 
to the preceding theorems respectively, we get the following corollaries: 
\begin{corollary-nonumber}[Corollary \ref{Hil1} and Corollary \ref{Hil2}]
Let $g^G(a;X)=X^3+aX+a$ $($resp. $X^3-aX^2-(a+3)X-1)$ be as above 
with given $a\in M$, and suppose that $M\supset k$ is Hilbertian 
$($e.g. a number field\,$)$. 
Then there exist infinitely many $b\in M$ such that 
$\mathrm{Spl}_{M} g^G(a;X)=\mathrm{Spl}_M g^G(b;X)$. 
\end{corollary-nonumber}
\begin{corollary-nonumber}[Corollary \ref{Siegel1} and Corollary \ref{Siegel2}]
Let $M$ be a number field and $\mathcal{O}_M$ the ring of integers in $M$. 
For $g^G(a;X)=X^3+aX+a$ $($resp. $X^3-aX^2-(a+3)X-1)$ as above with a given integer 
$a\in \mathcal{O}_M$, there exist only finitely many integers $b\in\mathcal{O}_M$ such that 
$\mathrm{Spl}_{M} g^G(a;X)=\mathrm{Spl}_M g^G(b;X)$. 
\end{corollary-nonumber}
In Section 6, as an application of the cubic case, 
we also give several sextic $k$-generic polynomials. 

The calculations in this paper were carried out with Mathematica \cite{Wol03}.

\section{Tschirnhausen transformation (geometric interpretation)}
\label{seTschirn}

Let $n\geq 3$ be a positive integer. 
Let $\mathbf{x}=(x_1,\ldots,x_n)$ and $\mathbf{y}=(y_1,\ldots,y_n)$ be 
$2n$ independent variables over $k$. 
Let $f_n(\bs;X)=f_n(s_1,\ldots,s_n;X) \in k(\bs)[X]$ 
(resp. $f_n(\bt;X)=f_n(t_1,\ldots,t_n;X) \in k(\bt)[X]$) be a monic polynomial 
of degree $n$ whose roots are $x_1,\ldots,x_n$ (resp. $y_1,\ldots,y_n$). 
Then we have 
\begin{align*}
f_n(\bs;X)\, &=\, \prod_{i=1}^n(X-x_i)
\, =\, X^n-s_1X^{n-1}+s_2X^{n-2}+\cdots+(-1)^n s_n,\\
f_n(\bt;X)\, &=\, \prod_{i=1}^n(X-y_i)
\, =\, X^n-t_1X^{n-1}+t_2X^{n-2}+\cdots+(-1)^n t_n,
\end{align*}
where \,$s_i$\, (resp. $t_i$)\, is the $i$-th elementary symmetric function in 
$n$ variables $x_1,\ldots,x_n$ (resp. $y_1,\ldots,y_n$). 
We put 
\[
K\ :=\ k(\bs,\bt); 
\]
it is naturally regarded as the rational function field over $k$ with $2n$ 
variables. We put 
\begin{align*}
L_\bs\ &:=\ \mathrm{Spl}_K\, f_n(\bs;X)\, =\, K(x_1,\ldots,x_n),\\
L_\bt\ &:=\ \mathrm{Spl}_K\, f_n(\bt;X)\, =\, K(y_1,\ldots,y_n).
\end{align*}
Then we have $L_\bs\, \cap\, L_\bt=K$ and $L_\bs L_\bt=k(\mathbf{x},\mathbf{y})$. 
The field extension $k(\mathbf{x},\mathbf{y})/K$ is a Galois extension 
whose Galois group is isomorphic to $\frak{S}_n\times \frak{S}_n$, 
where $\frak{S}_n$ is the symmetric group of degree $n$. 
Put 
\begin{align*}
\Gs\ :=\ \mathrm{Gal}(L_\bs L_\bt/L_\bt),\ \ 
\Gt\ :=\ \mathrm{Gal}(L_\bs L_\bt/L_\bs) 
\end{align*}
and
\[
\Gst\ :=\ \Gs\times\Gt. 
\]
Then we have 
$\Gst\cong \mathrm{Gal}(L_\bs L_\bt/K)$ which acts on 
$k(\mathbf{x},\mathbf{y})$ from the right. 
For $g=(\sigma,\tau)\in \Gst$, we take anti-isomorphisms 
\begin{align*}
\varphi\ \ &{\rm :}\ \ \Gs\rightarrow \frak{S}_n,\ \ \sigma\mapsto 
\varphi(\sigma),\\ 
\psi\ \ &{\rm :}\ \ \Gt\rightarrow \frak{S}_n,\ \ \tau\mapsto \psi(\tau), 
\end{align*}
and we regard the group $\Gst$ as $\frak{S}_n\times \frak{S}_n$ 
by the rule 
\begin{align*}
x_i^\sigma=x_{\varphi(\sigma)(i)},\quad y_i^\sigma=y_i,\quad 
x_i^\tau=x_{i},\quad y_i^\tau=y_{\psi(\tau)(i)},\quad (i=1,\ldots,n). 
\end{align*}
In a fixed algebraic closure of $K$, there exist $n!$ Tschirnhausen 
transformations from $f_n(\bs;X)$ to $f_n(\bt;X)$. 
We first study the field of definition of Tschirnhausen transformations 
from $f_n(\bs;X)$ to $f_n(\bt;X)$. 
Let 
\begin{align*}
D:=
\left(
\begin{array}{ccccc}
1 & x_1 & x_1^2 & \cdots & x_1^{n-1}\\ 
1 & x_2 & x_2^2 & \cdots & x_2^{n-1}\\
\vdots & \vdots & \vdots & \ddots & \vdots\\
1 & x_n & x_n^2 & \cdots & x_n^{n-1}\end{array}\right)
\end{align*}
be a so-called Vandermonde matrix of size $n$. 
The matrix $D$ is invertible because 
\[
{\rm det}\, D\, =\, \Dels,\ \ 
\mathrm{where}\ \ \Dels\, :=\prod_{1\leq i<j\leq n} (x_j-x_i). 
\]
Note that $k(\bs)(\Dels)$ is a quadratic extension of $k(\bs)$ when char $k\neq 2$. 
We define the $n$-tuple $(u_0(\mathbf{x},\mathbf{y}),\ldots,
u_{n-1}(\mathbf{x},\mathbf{y}))\in k[\mathbf{x},\mathbf{y},\Dels^{-1}]^n$ by
\begin{align}
\left(\begin{array}{c}u_0(\mathbf{x},\mathbf{y})\\ u_1(\mathbf{x},
\mathbf{y})\\ \vdots \\ u_{n-1}(\mathbf{x},\mathbf{y})\end{array}\right)
:=D^{-1}\left(\begin{array}{c}y_1\\ y_2\\ \vdots \\ 
y_n\end{array}\right). \label{defu}
\end{align}
Cramer's rule shows us 
\begin{align*}
u_i(\mathbf{x},\mathbf{y})=\Dels^{-1}\cdot\mathrm{det}
\left(\begin{array}{cccccccc}
1 & x_1 & \cdots & x_1^{i-1} & y_1 & x_1^{i+1} & \cdots & x_1^{n-1}\\ 
1 & x_2 & \cdots & x_2^{i-1} & y_2 & x_2^{i+1} & \cdots & x_2^{n-1}\\
\vdots & \vdots & & \vdots & \vdots & \vdots & & \vdots\\
1 & x_n & \cdots & x_n^{i-1} & y_n & x_n^{i+1} & \cdots & x_n^{n-1}
\end{array}\right).
\end{align*}
In order to simplify the presentation, we frequently write 
\[
u_i:=u_i(\mathbf{x},\mathbf{y}),\quad (i=0,\ldots,n-1). 
\]
The Galois group $\Gst$ acts on the orbit $\{u_i^{(\sigma,\tau)}\ |\ 
(\sigma,\tau)\in \Gst \}$ via regular representation from the right. 
However this action is not faithful. 
We put 
\[
H:=\{(\sigma,\tau)\in \Gst\ |\ \varphi(\sigma)=\psi(\tau)\}\cong \frak{S}_n. 
\]
Let $\overline{g}=Hg$ be a right coset of $H$ in $\Gst$. 
If $(\sigma,\tau)\in H$ then we have $u_i^{(\sigma,\tau)}=u_i$ 
for $i=0,\ldots,n-1$ by the lemma below. 
Hence the group $\Gst$ acts on the set 
$\{ u_i^g\ |\ \overline{g}\in H\backslash \Gst\}$ transitively from the right 
through the action on the set $H\backslash \Gst$ of right cosets. 

We see that the set $\{ \overline{(1,\tau)}\ |\ (1,\tau)\in \Gst\}$ 
(resp. $\{ \overline{(\sigma,1)}\ |\ (\sigma,1)\in \Gst\}$) form a complete 
residue system of $H\backslash \Gst$. 
Indeed, for $g=(\sigma,\tau)\in \Gst$, 
$\overline{g}=H(\sigma,\tau')^{-1}g=H(1,(\tau')^{-1}\tau)$ if 
$\varphi(\sigma)=\psi(\tau')$. 

\begin{lemma}\label{stabil}
For $(\sigma,\tau)\in \Gst$ and $i$, $0\leq i\leq n-1$, we have 
$u_i^{(\sigma,\tau)}=u_i$ if and only if $\varphi(\sigma)=\psi(\tau)$; 
that is, $H=\mathrm{Stab}_{\Gst}(u_i)$, the stabilizer of $u_i$ in $\Gst$. 
\end{lemma}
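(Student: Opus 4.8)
The plan is to argue in both directions, exploiting the defining relation $(u_0,\ldots,u_{n-1})^{\mathrm{tr}} = D^{-1}(y_1,\ldots,y_n)^{\mathrm{tr}}$, equivalently $D\cdot(u_0,\ldots,u_{n-1})^{\mathrm{tr}} = (y_1,\ldots,y_n)^{\mathrm{tr}}$, which unpacks to the $n$ scalar identities $\sum_{j=0}^{n-1} u_j x_i^j = y_i$ for $i=1,\ldots,n$. First I would prove the easy direction: suppose $\varphi(\sigma)=\psi(\tau)$, and call this common permutation $\pi$. Applying $(\sigma,\tau)$ to the identity $\sum_j u_j x_i^j = y_i$ sends $x_i\mapsto x_{\pi(i)}$ and $y_i\mapsto y_{\pi(i)}$, so $\sum_j u_j^{(\sigma,\tau)} x_{\pi(i)}^j = y_{\pi(i)}$ for all $i$; since $\pi$ is a bijection of $\{1,\ldots,n\}$, this says $(u_0^{(\sigma,\tau)},\ldots,u_{n-1}^{(\sigma,\tau)})^{\mathrm{tr}}$ also solves $D\cdot(\cdot) = (y_1,\ldots,y_n)^{\mathrm{tr}}$. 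Because $D$ is invertible (its determinant is $\Dels\neq 0$ in $k(\mathbf{x},\mathbf{y})$), the solution is unique, so $u_j^{(\sigma,\tau)}=u_j$ for every $j$, which in particular covers the single index $i$ in the statement. This shows $H\subseteq\mathrm{Stab}_{\Gst}(u_i)$.

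For the converse I would assume $u_i^{(\sigma,\tau)}=u_i$ for some fixed $i$ and deduce $\varphi(\sigma)=\psi(\tau)$. The cleanest route is to use the explicit Cramer-rule determinant for $u_i$: it is a polynomial in $\mathbf{x},\mathbf{y}$ divided by $\Dels$, and its numerator is a Vandermonde-type determinant with the $i$-th column replaced by $(y_1,\ldots,y_n)^{\mathrm{tr}}$. Write $\pi:=\varphi(\sigma)$ and $\rho:=\psi(\tau)$; applying $(\sigma,\tau)$ permutes the rows of the numerator determinant by $\pi^{-1}$ (via the $x$-columns) while simultaneously permuting the entries of the $y$-column by $\rho$, and likewise transforms $\Dels$ by the sign of $\pi$. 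Equating $u_i^{(\sigma,\tau)}$ with $u_i$ and clearing denominators yields a polynomial identity in $k[\mathbf{x},\mathbf{y}]$ which, treating the $y_\ell$ as a fresh set of indeterminates, forces the coefficient of each $y_\ell$ to match on both sides; tracking which cofactor (a polynomial in $\mathbf{x}$ only, essentially a product of $(x_a-x_b)$ type factors) multiplies $y_\ell$ on each side shows that $\rho(\ell)$ and $\pi(\ell)$ must produce the same cofactor, and since distinct indices give distinct (nonzero, non-proportional) cofactors, $\rho=\pi$. This gives $\mathrm{Stab}_{\Gst}(u_i)\subseteq H$, completing the proof; combined with the first inclusion, $H=\mathrm{Stab}_{\Gst}(u_i)$.

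The main obstacle is the converse direction, specifically making rigorous the claim that matching coefficients of the $y_\ell$ pins down the permutation. The subtlety is that $u_i$ depends on $i$ and one must ensure that for \emph{every} $i$ (not just, say, $i=0$ where the argument is most transparent) the $x$-only cofactors attached to the various $y_\ell$ are pairwise distinct up to the universal factor $\Dels$, so that no cancellation or coincidence among them could allow $\pi\neq\rho$. An alternative, perhaps slicker, way to dispatch the converse is to observe that $\sigma$ fixes $L_\bt=K(\by)$ pointwise, so $u_i^{(\sigma,\tau)}=u_i^{(\sigma,1)}$ computed by permuting only the $x$-columns; then $u_i^{(\sigma,1)}=u_i$ means $\sigma$ fixes $u_i$, hence fixes $K(u_i)$, and one separately argues (using the relations $\sum_j u_j x_\ell^j=y_\ell$ and that the $y_\ell$ generate $L_\bt/K$) that the compositum $L_\bs L_\bt$ is generated over $L_\bs$ by $u_i$ together with its $H\backslash\Gst$-conjugates in a way that detects $\pi$; this reduces the problem to the fact that the natural action of $\Gst$ on $H\backslash\Gst$ is faithful, i.e. $H$ contains no nontrivial normal subgroup of $\Gst$, which for $H\cong\frak{S}_n$ diagonal in $\frak{S}_n\times\frak{S}_n$ is immediate. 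I would present the determinant argument as the primary proof and mention the Galois-theoretic reformulation as a remark if space permits.
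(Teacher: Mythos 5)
Your proposal follows essentially the same route as the paper: the paper likewise expands $u_i=\Dels^{-1}\sum_{j=1}^n A_{j,i+1}\,y_j$ via cofactors of the Vandermonde matrix, applies $(\sigma,1)$ and $(1,\tau^{-1})$ separately (the signs $\varepsilon(\sigma)$ cancelling), uses the linear independence of $y_1,\ldots,y_n$ over $L_\bs$ to match the coefficients of each $y_j$, and concludes from the distinctness of the cofactors that $\varphi(\sigma)=\psi(\tau)$. The one point you flag as needing care --- that for every $i$ the cofactors attached to distinct indices are genuinely distinct --- is settled in the paper by the observation that $A_{j,i+1}\in k[x_1,\ldots,x_{j-1},x_{j+1},\ldots,x_n]$, i.e.\ exactly the variable $x_j$ is absent from the $j$-th cofactor, so no two of them can coincide.
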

\begin{proof}
Let $A_{i,j}$ be the $(i,j)$-cofactor of the matrix $D$. 
Thus we have the cofactor expansion, 
\begin{align*}
u_i(\mathbf{x},\mathbf{y})=\Dels^{-1}\,\sum_{j=1}^n A_{j,i+1}\ y_j,\quad 
(i=0,\ldots,n-1). 
\end{align*}
Hence for each $i, 0 \leq i \leq n-1$, we see 
\begin{align*}
u_i(\mathbf{x},\mathbf{y})^{(\sigma,1)}
\,=\,\varepsilon(\sigma)\, \Dels^{-1}\,\sum_{j=1}^n A_{j,i+1}^{(\sigma,1)}\ y_j
\,=\,\varepsilon(\sigma)\, \Dels^{-1}\,\sum_{j=1}^n \varepsilon(\sigma)\, A_{\varphi(\sigma)(j),i+1}\ y_j
\end{align*}
where $\varepsilon(\sigma)$ is the signature of $\varphi(\sigma) \in \frak{S}_n$, and also 
\begin{align*}
u_i(\mathbf{x},\mathbf{y})^{(1,\tau^{-1})}
\,&=\,\Dels^{-1}\,\sum_{j=1}^n A_{j,i+1}\ y_j^{\tau^{-1}}\ 
\,=\,\Dels^{-1}\,\sum_{j=1}^n A_{j,i+1}\ y_{\psi(\tau^{-1})(j)}\\
\,&=\,\Dels^{-1}\,\sum_{j=1}^n A_{{\psi(\tau)(j),i+1}}\ y_j. 
\end{align*}
Therefore we have $u_i(\mathbf{x},\mathbf{y})^{(\sigma,\tau)}=u_i(\mathbf{x},\mathbf{y})$ 
if and only if $A_{\varphi(\sigma)(j),i+1}=A_{\psi(\tau)(j),i+1}$ for $j=1,\ldots,n$ 
because the variables $y_1,\ldots,y_n$ are linearly independent over $L_{\mathbf s}$. 
Since $A_{j,i+1}\in k[x_1,\ldots,x_{j-1},x_{j+1},\ldots,x_n]$, we finally have 
$u_i(\mathbf{x},\mathbf{y})^{(\sigma,\tau)}=u_i(\mathbf{x},\mathbf{y})$ 
if and only if $\sigma=\tau$. 
\end{proof}

Hence, in particular, we see that $\#H\backslash G_{\bs,\bt}=n!$ and 
that the subgroups $\Gs$ and $\Gt$ of $\Gst$ act on the set 
$\{ u_i^g\ |\ \overline{g}\in H\backslash \Gst\}$ transitively. 

For $\overline{g}=\overline{(1,\tau)}$, 
we obtain the following equality from the definition (\ref{defu}): 
\[
y_{\psi(\tau)(i)} = u_0^g+u_1^g x_i+\cdots
+u_{n-1}^gx_i^{n-1}\ \ \mathrm{for}\ \ i=1,\ldots,n. 
\]
This means that the set 
$\{(u_0^g,\ldots,u_{n-1}^g)\ |\ \overline{g}\in H\backslash \Gst\}$ 
gives coefficients of $n!$ different Tschirnhausen transformations from 
$f_n(\bs;X)$ to $f_n(\bt;X)$ each of which is respectively defined over 
$K(u_0^g,\ldots,u_{n-1}^g)$. 
\begin{definition}
For each $g\in \Gst$, we call $K(u_0^g,\ldots,u_{n-1}^g)$ a field of 
Tschirnhausen coefficients from $f_n(\bs;X)$ to $f_n(\bt;X)$. 
\end{definition}
We put 
\[
v_i(\mathbf{x},\mathbf{y}):=u_i(\mathbf{y},\mathbf{x}),\ \ \mathrm{for}\ \ i=0,\ldots,n-1. 
\]
We write $v_i=v_i(\mathbf{x},\mathbf{y})$ for simplicity. 
Then $K(v_0^g,\ldots,v_{n-1}^g)$ gives a field of Tschirnhausen 
coefficients from $f_n(\bt;X)$ to $f_n(\bs;X)$. 
We obtain 
\begin{proposition}\label{prop1}
We have $(L_\bs L_\bt)^{g^{-1}Hg}=K(u_0^g,\ldots,u_{n-1}^g)=K(v_0^g,\ldots,v_{n-1}^g)
=K(u_i^g)=K(v_i^g)$ and $[K(u_i^g) : K]=n!$ for each $i, 0\leq i\leq n-1$, and for each 
$g\in \Gst$. 
\end{proposition}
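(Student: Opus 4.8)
The plan is to read everything off the Galois correspondence for $L_\bs L_\bt/K$, which by the discussion above is a Galois extension with group $G:=\Gst\cong\frak{S}_n\times\frak{S}_n$ of order $(n!)^2$, using Lemma \ref{stabil} which says that $H$ is exactly the stabilizer of each $u_i$. First I would record the conjugate of that lemma: for $g\in\Gst$ and $0\le i\le n-1$, the stabilizer of $u_i^g$ in $G$ is $g^{-1}Hg$, since $(u_i^g)^h=u_i^g$ is equivalent to $u_i^{ghg^{-1}}=u_i$, i.e. $ghg^{-1}\in H$. Because $L_\bs L_\bt/K$ is Galois, the $G$-conjugates of $u_i^g$ are pairwise distinct and number $[G:g^{-1}Hg]=[G:H]=n!$, whence $[K(u_i^g):K]=n!$.

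Next, each $u_j^g$ ($0\le j\le n-1$) is fixed by $g^{-1}Hg$ because $u_j$ is fixed by $H$, so
\[
K(u_i^g)\ \subseteq\ K(u_0^g,\ldots,u_{n-1}^g)\ \subseteq\ (L_\bs L_\bt)^{g^{-1}Hg},
\]
and the Galois correspondence gives $[(L_\bs L_\bt)^{g^{-1}Hg}:K]=[G:g^{-1}Hg]=n!$. Comparing degrees forces equality throughout, so $K(u_i^g)=K(u_0^g,\ldots,u_{n-1}^g)=(L_\bs L_\bt)^{g^{-1}Hg}$ and $[K(u_i^g):K]=n!$. For the $v_i$ I would check that $H$ is also the full stabilizer of each $v_i$ in $G$, after which the identical argument yields $K(v_i^g)=K(v_0^g,\ldots,v_{n-1}^g)=(L_\bs L_\bt)^{g^{-1}Hg}$, completing the chain of equalities. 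To see $\mathrm{Stab}_G(v_i)=H$ one repeats the computation in the proof of Lemma \ref{stabil} with the roles of $\bx$ and $\by$ interchanged: writing $v_i=\Delt^{-1}\sum_{j=1}^n A'_{j,i+1}\,x_j$, where $A'_{j,i+1}$ is the corresponding cofactor of the Vandermonde matrix in $y_1,\ldots,y_n$, one has $A'_{j,i+1}\in k[y_1,\ldots,y_{j-1},y_{j+1},\ldots,y_n]$ and the $x_j$ are linearly independent over $L_\bt$, so the same linear-independence argument shows $v_i^{(\sigma,\tau)}=v_i$ precisely when $\varphi(\sigma)=\psi(\tau)$. (Alternatively, the substitution $x_j\leftrightarrow y_j$ defines a $k$-automorphism $\iota$ of $L_\bs L_\bt$ with $\iota(K)=K$ and $\iota H\iota^{-1}=H$ carrying $u_i$ to $v_i$, so $\mathrm{Stab}_G(v_i)=\iota\,\mathrm{Stab}_G(u_i)\,\iota^{-1}=H$.)

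Beyond this bookkeeping there is essentially no obstacle; the one point that genuinely needs verification rather than citation is $\mathrm{Stab}_G(v_i)=H$. Indeed $H$, the diagonal copy of $\frak{S}_n$, need not be maximal in $\frak{S}_n\times\frak{S}_n$ (for instance it sits inside $\{(\sigma,\tau):\varepsilon(\sigma)=\varepsilon(\tau)\}$ when char $k\ne 2$), so one cannot conclude $K(v_i^g)=(L_\bs L_\bt)^{g^{-1}Hg}$ purely formally from $v_i^g$ being $g^{-1}Hg$-fixed and lying outside $K$; the symmetric recomputation of Lemma \ref{stabil} supplies exactly what is missing, and then the degree comparison closes the argument.
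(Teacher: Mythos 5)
Your proof is correct and follows essentially the same route as the paper: the chain of inclusions $K(u_i^g)\subseteq K(u_0^g,\ldots,u_{n-1}^g)\subseteq (L_\bs L_\bt)^{g^{-1}Hg}$ (and its analogue for the $v_i^g$) collapsed by the single fact $\mathrm{Stab}_{\Gst}(u_i^g)=\mathrm{Stab}_{\Gst}(v_i^g)=g^{-1}Hg$ together with a degree count. The paper's proof is just a terser version of this; your explicit verification that $H$ stabilizes the $v_i$ (by symmetry of Lemma \ref{stabil}, or via the involution swapping $\bx$ and $\by$) fills in exactly the detail the paper leaves implicit.
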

\begin{proof}
We have 
\begin{align*}
&L_\bs L_\bt& &\supset& &
(L_\bs L_\bt)^{g^{-1}Hg}& &\supset& &
K(u_0^g,\ldots,u_{n-1}^g)& &\supset& &K(u_i^g)& &\supset& &K,\\
&\ \ || & & & &\ \ \ || & & & & & & & & & & & &\ \! ||\\
&L_\bs L_\bt& &\supset& &
(L_\bs L_\bt)^{g^{-1}Hg}& &\supset& &
K(v_0^g,\ldots,v_{n-1}^g)& &\supset& &K(v_i^g)& &\supset& &K. 
\end{align*}
Hence the assertion follows from $\mathrm{Stab}_{\Gst}(u_i^g)
=\mathrm{Stab}_{\Gst}(v_i^g)=g^{-1}Hg$. 
\end{proof}
\begin{corollary}
We have $\mathrm{Spl}_{K(u_i^g)} f_n(\bs;X)=\mathrm{Spl}_{K(u_i^g)} f_n(\bt;X)$ 
for every $g\in \Gst$.  
\end{corollary}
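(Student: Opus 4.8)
The plan is to read off the corollary directly from Proposition~\ref{prop1}. First I would recall that by Proposition~\ref{prop1} the field $K(u_i^g)$ is the fixed field $(L_\bs L_\bt)^{g^{-1}Hg}$ inside the Galois extension $L_\bs L_\bt/K$, whose Galois group is $\Gst\cong\frak{S}_n\times\frak{S}_n$. Since $L_\bs L_\bt=k(\bx,\by)$ already contains all the roots $x_1,\ldots,x_n$ of $f_n(\bs;X)$ and all the roots $y_1,\ldots,y_n$ of $f_n(\bt;X)$, it follows that $L_\bs L_\bt=\mathrm{Spl}_{K(u_i^g)}f_n(\bs;X)\cdot\mathrm{Spl}_{K(u_i^g)}f_n(\bt;X)$, so it suffices to show that each of these two splitting fields is already all of $L_\bs L_\bt$, i.e. that $L_\bs L_\bt/K(u_i^g)$ is generated by the $x_i$ alone and also by the $y_i$ alone.

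The key step is a Galois-correspondence argument. Over $K(u_i^g)=(L_\bs L_\bt)^{g^{-1}Hg}$, the subextension $\mathrm{Spl}_{K(u_i^g)}f_n(\bs;X)=K(u_i^g)(x_1,\ldots,x_n)=K(u_i^g)L_\bs$ corresponds to the subgroup $g^{-1}Hg\cap \Gt$ of $g^{-1}Hg$. But $\Gt=\mathrm{Gal}(L_\bs L_\bt/L_\bs)$ is precisely the second factor $\{1\}\times\frak{S}_n$ under the identification $\Gst\cong\frak{S}_n\times\frak{S}_n$, while $g^{-1}Hg$ is a conjugate of the diagonal $H\cong\frak{S}_n$; a conjugate of the diagonal subgroup of $\frak{S}_n\times\frak{S}_n$ intersects $\{1\}\times\frak{S}_n$ trivially (the diagonal meets each factor only in the identity, and conjugation by an element of $\frak{S}_n\times\frak{S}_n$ permutes coordinates within each factor, preserving this property). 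Hence $g^{-1}Hg\cap\Gt=\{1\}$, so $K(u_i^g)L_\bs=L_\bs L_\bt$, which gives $\mathrm{Spl}_{K(u_i^g)}f_n(\bs;X)=L_\bs L_\bt$. By the symmetric argument using $v_i^g=u_i^g(\by,\bx)$ and $\Gs=\mathrm{Gal}(L_\bs L_\bt/L_\bt)$ together with $K(u_i^g)=K(v_i^g)$ from Proposition~\ref{prop1}, we likewise get $\mathrm{Spl}_{K(u_i^g)}f_n(\bt;X)=L_\bs L_\bt$. Comparing the two equalities yields the claimed coincidence of splitting fields.

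The main obstacle, such as it is, is keeping the anti-isomorphism conventions straight: the identification of $\Gst$ with $\frak{S}_n\times\frak{S}_n$ is via the anti-isomorphisms $\varphi,\psi$ and the right action, so one must check that under this identification $\Gt$ really is the factor acting trivially on the $x_i$ and $H$ really is the diagonal $\{(\sigma,\tau):\varphi(\sigma)=\psi(\tau)\}$ — both of which are exactly how they were defined in the text, so no subtlety actually arises. The only genuine input beyond bookkeeping is the elementary group-theoretic fact that a conjugate of the diagonal in $\frak{S}_n\times\frak{S}_n$ meets either coordinate subgroup trivially, together with Proposition~\ref{prop1}, both of which I am allowed to invoke. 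Thus the corollary is essentially immediate and the proof will be only a few lines.
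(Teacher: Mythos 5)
Your proof is correct, but it takes a genuinely different route from the paper's. The paper's own proof is a one-line appeal to Tschirnhausen equivalence: by construction the $u_j^g$ (resp.\ the $v_j^g$) are the coefficients of a Tschirnhausen transformation from $f_n(\bs;X)$ to $f_n(\bt;X)$ (resp.\ the reverse), and all of them lie in $K(u_i^g)=K(v_i^g)$ by Proposition \ref{prop1}; hence the root fields $K(u_i^g)[X]/(f_n(\bs;X))$ and $K(u_i^g)[X]/(f_n(\bt;X))$ are isomorphic over $K(u_i^g)$, and normality of splitting fields then forces the two splitting fields to coincide. You instead run the Galois correspondence: $\mathrm{Spl}_{K(u_i^g)}f_n(\bs;X)=L_\bs(u_i^g)$ corresponds to $g^{-1}Hg\cap\Gt$, which is trivial because a conjugate of the diagonal in $\frak{S}_n\times\frak{S}_n$ still projects injectively onto each factor, so both splitting fields equal $L_\bs L_\bt$. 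This is precisely the content of part (ii) of the proposition that the paper states and proves immediately \emph{after} this corollary ($L_\bs L_\bt=L_\bs(u_i^g)=L_\bt(u_i^g)$, proved there by the same computation $g^{-1}Hg\cap\Gs=g^{-1}Hg\cap\Gt=\{1\}$); in effect you prove that later, stronger statement first and read the corollary off from it. Both arguments are sound. The paper's has the advantage of transferring verbatim to the specialized situation of Section 3, where the Galois group degenerates and one can no longer argue with the full group $\frak{S}_n\times\frak{S}_n$; yours yields the sharper conclusion that the common splitting field is all of $L_\bs L_\bt$, at the cost of front-loading the group-theoretic lemma. (Your invocation of $K(u_i^g)=K(v_i^g)$ in the symmetric step is not actually needed: the equality $K(u_i^g)L_\bt=L_\bs L_\bt$ already follows from $g^{-1}Hg\cap\Gs=\{1\}$.)
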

\begin{proof}
The polynomials $f_n(\bs;X)$ and $f_n(\bt;X)$ are Tschirnhausen equivalent over 
$K(u_0^g,\ldots,u_{n-1}^g)=K(u_i^g)=K(v_i^g)$. 
Hence the quotient fields $K(u_i^g)[X]/(f_n(\bs;X))$ and $K(u_i^g)[X]/(f_n(\bs;X))$ 
are isomorphic over $K(u_i^g)$. 
\end{proof}
\begin{proposition}
We have 
\begin{align*}
&{\rm (i)}\ L_\bs\cap K(u_i^g)=L_\bt\cap K(u_i^g)=K\ \ \textrm{for}\ \ g\in \Gst;\\
&{\rm (ii)}\ L_\bs L_\bt=L_\bs(u_i^g)=L_\bt(u_i^g)\ \ 
\textrm{for}\ \ g\in \Gst. 
\end{align*}
\end{proposition}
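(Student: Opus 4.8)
The plan is to translate both assertions into the Galois correspondence for $L_\bs L_\bt/K$, whose group is $\Gst=\Gs\times\Gt\cong\frak{S}_n\times\frak{S}_n$, and then to reduce everything to two elementary facts about how the conjugate $g^{-1}Hg$ sits relative to the two direct factors $\Gs$ and $\Gt$.

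First I would record the two inputs. By Proposition \ref{prop1}, $K(u_i^g)$ is the fixed field of $g^{-1}Hg$; and since $H=\mathrm{Stab}_{\Gst}(u_i)$ by Lemma \ref{stabil}, replacing $g$ by $hg$ with $h\in H$ replaces $g^{-1}Hg$ by $g^{-1}h^{-1}Hhg=g^{-1}Hg$, so $K(u_i^g)$ depends only on the coset $Hg$. Hence I may take $g$ in the complete residue system $\{\overline{(1,\tau)}\}$ of $H\backslash\Gst$ noted before Lemma \ref{stabil}. Under the identification $\Gst=\frak{S}_n\times\frak{S}_n$ we have $\Gs=\frak{S}_n\times\{1\}$, $\Gt=\{1\}\times\frak{S}_n$, $H=\{(\pi,\pi)\mid\pi\in\frak{S}_n\}$, and therefore, for $g=(1,\tau)$,
$$g^{-1}Hg=\{(\pi,\tau^{-1}\pi\tau)\mid\pi\in\frak{S}_n\},$$
a \emph{twisted diagonal} which projects isomorphically onto each of the two factors.

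For part (ii): by the Galois correspondence $\mathrm{Gal}(L_\bs L_\bt/L_\bs(u_i^g))=\Gt\cap g^{-1}Hg$, and an element $(\pi,\tau^{-1}\pi\tau)$ lies in $\Gt=\{1\}\times\frak{S}_n$ only when $\pi=1$; so the intersection is trivial and $L_\bs(u_i^g)=L_\bs L_\bt$. The computation $\Gs\cap g^{-1}Hg=\{1\}$ is identical and gives $L_\bt(u_i^g)=L_\bs L_\bt$. For part (i): $\mathrm{Gal}(L_\bs L_\bt/(L_\bs\cap K(u_i^g)))$ is the subgroup $\langle\Gt,g^{-1}Hg\rangle$; since $(\pi,\tau^{-1}\pi\tau)\cdot(1,(\tau^{-1}\pi\tau)^{-1})=(\pi,1)$ already lies in it for every $\pi$, this subgroup contains $\Gs$, hence equals $\Gs\times\Gt=\Gst$, so $L_\bs\cap K(u_i^g)=K$; symmetrically $\langle\Gs,g^{-1}Hg\rangle=\Gst$ yields $L_\bt\cap K(u_i^g)=K$.

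I expect no real obstacle beyond bookkeeping: the entire content is the observation that a (twisted) diagonal subgroup of $\frak{S}_n\times\frak{S}_n$ meets each factor trivially and, together with either factor, generates the whole product. The only point needing care is the reduction to a representative of the form $(1,\tau)$, which is precisely what Lemma \ref{stabil} (together with $K(u_i^g)=K(u_0^g,\ldots,u_{n-1}^g)$ from Proposition \ref{prop1}) licenses.
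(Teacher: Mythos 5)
Your proof is correct and is essentially the paper's own argument: the paper likewise reduces to the representative $g=(1,\tau)$, proves (i) by showing $(g^{-1}Hg)\Gs=(g^{-1}Hg)\Gt=\Gst$ (your generated subgroup computation), and proves (ii) by checking $g^{-1}Hg\cap\Gs=g^{-1}Hg\cap\Gt=\{1\}$ for the twisted diagonal. No substantive difference.
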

\begin{proof}
(i) We should show that $(g^{-1}Hg)\Gt=(g^{-1}Hg)\Gs=\Gst$. 
We may suppose $g=(1,\tau)$ without loss of generality. 
Then, for any $(\sigma',\tau')\in H$, there exists an element 
$(\sigma',\tau^{-1}\tau'\tau)({\sigma'}^{-1},1)=(1,\tau^{-1}\tau'\tau)$ in 
$(g^{-1}Hg)\Gs$. 
Hence the equality $(g^{-1}Hg)\Gs=\Gst$ follows from $\{ 1\}\times \Gt\subset (g^{-1}Hg)\Gs$. 
The assertion $(g^{-1}Hg)\Gt=\Gst$ is obtained by a similar way because 
we may replace $g$ by an element of the form $(\sigma,1)$.

(ii) We check that $g^{-1}Hg\cap \Gs=g^{-1}Hg\cap \Gt=\{1\}$. 
An element $c$ in $g^{-1}Hg$ can be described as 
$c=(\sigma',\tau^{-1}\tau'\tau)$ where $\varphi(\sigma')=\psi(\tau')$. 
If $c\in \Gs$, then we obtain 
$c=(\sigma',\tau^{-1}\tau'\tau)=(\sigma',1)\in \Gs\times \{ 1\}$. 
Then $\varphi(\sigma')=\psi(\tau')=1$, and hence $c=(1,1)$ in $\Gst$. 
If $c\in \Gt$, then we have $\sigma'=1$. 
Hence follows $c=(1,1)\in \Gst$. 
\end{proof}

Moreover we obtain

\begin{proposition}\label{propLL}
$L_\bs L_\bt=K(u_i^g\ |\ \overline{g}\in H\backslash \Gst)$ for every 
$i, 0\leq i\leq n-1$. 
\end{proposition}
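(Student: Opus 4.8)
The plan is to realize $L_\bs L_\bt$ as the fixed field of the intersection $\bigcap_{\overline{g}} g^{-1}Hg$ over all right cosets $\overline{g}\in H\backslash\Gst$, and then to show this intersection is trivial. By Galois theory, $K(u_i^g\mid \overline{g}\in H\backslash\Gst)$ is the fixed field of $\bigcap_{\overline{g}}\mathrm{Stab}_{\Gst}(u_i^g)$, and Proposition \ref{prop1} (via Lemma \ref{stabil}) identifies $\mathrm{Stab}_{\Gst}(u_i^g)=g^{-1}Hg$. So the assertion $L_\bs L_\bt=K(u_i^g\mid\overline{g}\in H\backslash\Gst)$ is equivalent to $\bigcap_{g\in\Gst} g^{-1}Hg=\{1\}$, i.e. $H$ contains no nontrivial normal subgroup of $\Gst$ — equivalently, $H$ is \emph{core-free} in $\Gst=\frak{S}_n\times\frak{S}_n$.

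First I would set up the group-theoretic translation carefully: note $H=\{(\sigma,\tau)\mid\varphi(\sigma)=\psi(\tau)\}$ is the diagonal copy of $\frak{S}_n$, so $K(u_i^g\mid\overline{g})$ corresponds to the fixed field of the normal core $\mathrm{Core}_{\Gst}(H)=\bigcap_{g}g^{-1}Hg$. Then I would compute this core directly. If $N=\bigcap_g g^{-1}Hg$ is normal in $\Gst=\frak{S}_n\times\frak{S}_n$, then in particular it is normalized by $\Gs=\frak{S}_n\times\{1\}$ and by $\Gt=\{1\}\times\frak{S}_n$. An element of $H$ has the form $(\sigma,\tau)$ with $\varphi(\sigma)=\psi(\tau)$; conjugating by $(\rho,1)\in\Gs$ sends it to $(\rho^{-1}\sigma\rho,\tau)$, and for this to remain in $H$ for \emph{all} $\rho$ forces (reading off the condition $\varphi(\rho^{-1}\sigma\rho)=\psi(\tau)=\varphi(\sigma)$) that $\varphi(\sigma)$ commutes with every element of $\frak{S}_n$, hence $\varphi(\sigma)=1$ since $n\geq 3$ makes $\frak{S}_n$ centerless; but then $\psi(\tau)=1$ too, so $(\sigma,\tau)=(1,1)$. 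Thus $N=\{1\}$.

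Alternatively — and this may read more cleanly in the paper's idiom — I would argue directly with the fields rather than the cores: by Proposition \ref{prop1} we already have $K(u_i^g)=(L_\bs L_\bt)^{g^{-1}Hg}$, so the compositum $K(u_i^g\mid\overline{g}\in H\backslash\Gst)$ is fixed exactly by $\bigcap_g g^{-1}Hg$; since $\{\overline{(1,\tau)}\}$ is a complete residue system (shown in the text before Lemma \ref{stabil}), it suffices to intersect over $g=(1,\tau)$, and an element $(\sigma,\tau^{-1}\tau'\tau)$ of $(1,\tau)^{-1}H(1,\tau)$ lying in \emph{every} such conjugate (take $\tau$ ranging over $\Gt$) again forces $\varphi(\sigma)$ central, hence trivial. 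Either way one concludes $\bigcap_g g^{-1}Hg=\{1\}$, so the fixed field is the whole of $L_\bs L_\bt$.

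The only genuine subtlety — the step I would be most careful about — is the centerless property of $\frak{S}_n$ for $n\geq 3$, which is exactly why the hypothesis $n\geq 3$ appears at the start of Section \ref{seTschirn}; for $n=2$ the diagonal $H$ would already be normal in $\frak{S}_2\times\frak{S}_2$ and the proposition would fail. Beyond that the argument is a routine unwinding of Galois correspondence together with the stabilizer computation already carried out in Lemma \ref{stabil}, so I do not anticipate any real obstacle; I would simply take care to phrase the core-triviality cleanly and to invoke Proposition \ref{prop1} for the identification of the relevant fixed fields.
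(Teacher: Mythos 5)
Your proof is correct and takes essentially the same approach as the paper: both reduce, via the Galois correspondence and the identification $\mathrm{Stab}_{\Gst}(u_i^g)=g^{-1}Hg$ from Lemma \ref{stabil} and Proposition \ref{prop1}, to showing that the normal core $\bigcap_{\overline{g}\in H\backslash \Gst}g^{-1}Hg$ is trivial, and both conclude by observing that a common element would force $\varphi(\sigma')$ to lie in the center of $\frak{S}_n$, which is trivial for $n\geq 3$. Your second, field-theoretic variant is essentially the paper's argument verbatim; your explicit remark that $n\geq 3$ is exactly where centerlessness enters is a point the paper leaves implicit.
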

\begin{proof}
We should show that 
$\bigcap_{\overline{g}\in H\backslash \Gst} g^{-1}Hg=\{ 1\}$ 
because $\mathrm{Stab}_{\Gst}(u_i^g)=g^{-1}Hg$. Suppose 
$(\sigma',\tau')\in \bigcap_{\overline{g}\in H\backslash \Gst} g^{-1}Hg$. 
From $H\backslash \Gst=\{\overline{(1,\tau)}\ |\ (1,\tau)\in \Gst\}$, 
we have $\varphi(\sigma')=\psi(\tau^{-1} \tau' \tau)$ 
for every $\tau\in \Gt$. 
Since $\varphi(\sigma')=\psi(\tau)\psi(\tau')\psi(\tau)^{-1}=\psi(\tau)
\varphi(\sigma')\psi(\tau)^{-1}$, 
we see that $\varphi(\sigma')$ is in the center of the symmetric group 
$\frak{S}_n$. 
Thus we have $(\sigma',\tau')=(1,1)$ because the center of $\frak{S}_n$ is trivial. 
\end{proof}
We define a polynomial of degree $n!$ by 
\begin{align*}
F_i(\bs,\bt;X):=\prod_{\overline{g}\in H\backslash \Gst} (X-u_i^g)\in K[X], 
\quad (i=0,\ldots,n-1). 
\end{align*}
We see from Proposition \ref{prop1} that $F_i(\bs,\bt;X), (i=0,\ldots,n-1)$, 
is irreducible over $k(\bs,\bt)$. 
From Proposition \ref{propLL}, furthermore, we have the following theorem: 
\begin{theorem}\label{th-gen}
The polynomial $F_i(\bs,\bt;X) \in k(\bs,\bt)[X]$ is $k$-generic for 
$\frak{S}_n\times \frak{S}_n$. 
\end{theorem}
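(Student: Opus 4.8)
The plan is to show that $F_i(\bs,\bt;X)$ is $k$-generic for $\frak{S}_n\times\frak{S}_n$ by verifying the two defining properties: that its Galois group over $k(\bs,\bt)$ is $\frak{S}_n\times\frak{S}_n$, and that every $\frak{S}_n\times\frak{S}_n$-Galois extension of an infinite field $M\supset k$ arises by specialization. For the first property, recall that $L_\bs L_\bt = k(\bx,\by)$ is Galois over $K=k(\bs,\bt)$ with group $\Gst\cong\frak{S}_n\times\frak{S}_n$, and by Proposition~\ref{propLL} we have $L_\bs L_\bt = K(u_i^g\mid\overline{g}\in H\backslash\Gst)$, which is precisely the splitting field of $F_i(\bs,\bt;X)$ over $K$ (since the roots $u_i^g$ are the full $\Gst$-orbit and they generate $L_\bs L_\bt$). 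Hence $\mathrm{Gal}(F_i(\bs,\bt;X)/K)\cong\frak{S}_n\times\frak{S}_n$, acting on the $n!$ roots via the transitive action on $H\backslash\Gst$; note the action on the root set is by the right regular-coset representation, which is faithful exactly by Proposition~\ref{propLL}.

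For the genericity (specialization) property, the key observation is that $K=k(\bs,\bt)$ is itself a rational function field over $k$ in the $2n$ variables $\bs,\bt$, and $L_\bs L_\bt = k(\bx,\by)$ is also rational over $k$ in the $2n$ variables $\bx,\by$, with $\Gst$ acting on $k(\bx,\by)$ by permuting the two blocks of variables. In other words, the extension $k(\bx,\by)/k(\bs,\bt)$ is the ``generic'' $\frak{S}_n\times\frak{S}_n$-extension obtained as the compositum of two copies of the generic $\frak{S}_n$-extension $k(x_1,\dots,x_n)/k(s_1,\dots,s_n)$. I would invoke the standard fact (see e.g. \cite{JLY02}) that this compositum extension is a versal/generic $\frak{S}_n\times\frak{S}_n$-extension — equivalently, that $f_n(\bs;X)\cdot f_n(\bt;X)$ (or the pair of generic polynomials) is $k$-generic for $\frak{S}_n\times\frak{S}_n$, since the generic polynomial $f_n(\bs;X)=X^n-s_1X^{n-1}+\cdots+(-1)^ns_n$ is $k$-generic for $\frak{S}_n$ and a direct product of groups admits a generic polynomial given by ``juxtaposing'' generic polynomials for the factors. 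The remaining point is to transfer genericity from the extension $k(\bx,\by)/K$ to the single polynomial $F_i(\bs,\bt;X)$: since $F_i(\bs,\bt;X)$ has splitting field exactly $k(\bx,\by)$ over $K$ with the correct Galois group, any $\frak{S}_n\times\frak{S}_n$-extension $N/M$ with $M\supset k$ infinite is, by genericity of the pair $(f_n(\bs;X),f_n(\bt;X))$, the splitting field of $f_n(\ba;X)f_n(\bb;X)$ for suitable $\ba,\bb\in M^n$ with $\Delta_\ba\Delta_\bb\neq 0$; and the same specialization $(\bs,\bt)\mapsto(\ba,\bb)$ applied to $F_i$ — whose roots are rational functions $u_i^g$ in $\bx,\by$ with coefficients in $K$ — produces a polynomial over $M$ whose splitting field is generated by the specialized $u_i^g(\bm\alpha,\bm\beta)$, and these generate $N$ because the $u_i^g$ generate $k(\bx,\by)$ over $K$. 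One must check the specialization is well-defined (denominators $\Delta_\bs$ do not vanish, guaranteed by $\Delta_\ba\Delta_\bb\neq 0$) and that no collapse of the Galois group occurs; the latter follows because the splitting field of the specialized $f_n(\ba;X)f_n(\bb;X)$ already has group $\frak{S}_n\times\frak{S}_n$, so the specialized $F_i$ cannot have a larger splitting field and its roots lie in $N$.

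The step I expect to be the main obstacle is the clean passage from ``the extension $k(\bx,\by)/K$ is generic for $\frak{S}_n\times\frak{S}_n$'' to ``the single polynomial $F_i(\bs,\bt;X)$ is generic'': one needs a lemma saying that if $L/K$ is a generic $G$-extension with $K$ rational over $k$, and $\theta\in L$ is an element whose $G$-orbit generates $L/K$ with stabilizer trivial (so that the minimal polynomial of $\theta$ over $K$ is a $G$-polynomial), then that minimal polynomial is $k$-generic for $G$ — and crucially that specializing parameters commutes with forming this minimal polynomial, up to controlling the bad locus where $\Delta_\bs$ vanishes. Here $\theta=u_i$, $G=\frak{S}_n\times\frak{S}_n$, the orbit is $\{u_i^g\}$, triviality of $\bigcap g^{-1}Hg$ is Proposition~\ref{propLL}, and the bad locus is exactly $\Delta_\ba\Delta_\bb=0$ which we have excluded by hypothesis throughout Section~3. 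Assembling these pieces — genericity of the product extension, Proposition~\ref{propLL}, Proposition~\ref{prop1} for irreducibility, and the specialization bookkeeping — completes the proof. I would also remark that Theorem~\ref{throotf} and Corollary~\ref{cor1} are then the arithmetic payoff: $F_i(\ba,\bb;X)$ having a root in $M$ detects exactly when the specialized $\frak{S}_n$-data on the two sides become ``aligned'', i.e. when a Tschirnhausen transformation defined over $M$ exists.
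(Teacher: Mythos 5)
Your proposal is correct and follows essentially the same route as the paper, whose entire proof is the one-line observation that $\mathrm{Spl}_K F_i(\bs,\bt;X)=K(u_i^g\mid\overline{g}\in H\backslash\Gst)=L_\bs L_\bt$ (Proposition~\ref{propLL}) combined with the $\frak{S}_n$-genericness of $f_n(\bs;X)$ and $f_n(\bt;X)$. You simply spell out the details the paper leaves implicit; the only small correction is that the bad locus for the specialization is not just $\Delta_\ba\Delta_\bb=0$ but also includes the vanishing of the discriminant of $F_i$ itself (cf.\ the hypothesis ``$F_j(\ba,\bb;X)$ has no multiple root'' in Theorem~\ref{thgen} and Proposition~\ref{prop12}), which is harmless since the specializations realizing a given extension are Zariski-dense.
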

\begin{proof}
The assertion follows from 
$\mathrm{Spl}_K\, F_i(\bs,\bt;X)=K(u_i^g\ |\ \overline{g}\in H\backslash \Gst)
=L_\bs L_\bt$ and the $\frak{S}_n$-genericness of $f_n(\bs;X)$ and $f_n(\bt;X)$. 
\end{proof}

In case of char $k=2$, we have $k(\bs)(\Dels)=k(\bs)$ because 
$\Dels\in k(\bs)$. 
Hence, we use the results of Berlekamp \cite{Ber76} and take 
the Berlekamp discriminant 
\[
\betas:=\sum_{i<j}\frac{x_i}{x_i+x_j}, 
\]
instead of $\Dels$. 
The field $k(\bs)(\betas)$ is a quadratic extension of $k(\bs)$. 

Let $\mathfrak{A}_n$ be the alternating group of degree $n$, and put 
\begin{align*}
(H\backslash \Gst)^+\, &:=\, \{\overline{g}=\overline{(1,\tau)} 
\in H\backslash \Gst\ |\ \psi(\tau)\in \mathfrak{A}_n\},\\
(H\backslash \Gst)^-\, &:=\, \{\overline{g}=\overline{(1,\tau)} 
\in H\backslash \Gst\ |\ \psi(\tau)\not\in \mathfrak{A}_n\} 
\end{align*}
and 
\begin{align}
F_i^{\pm}(X)\, :=\prod_{\overline{g}\in (H\backslash \Gst)^\pm} 
(X-u_i^g), \quad (i=0,\ldots,n-1).\label{defFpm}
\end{align}
\begin{proposition}\label{prop-decom}
If char $k\neq 2$ $($resp. char $k=2$$)$, the polynomial 
$F_i(\bs,\bt;X)$ splits into two irreducible factors $F_i^+(X)$ and 
$F_i^-(X)$ of degree $n!/2$ over the quadratic extension $K(\Dels/\Delt)$ 
$($resp. $K(\betas+\betat))$ of $K$. 
\end{proposition}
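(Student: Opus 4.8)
The plan is to identify, for the polynomial $F_i(\bs,\bt;X)=\prod_{\overline g\in H\backslash\Gst}(X-u_i^g)$, the subgroup of $\Gst$ that fixes the product $F_i^+(X)$ (equivalently the set $(H\backslash\Gst)^+$), and to show that this subgroup has index $2$ and that the corresponding fixed field is exactly the claimed quadratic extension. First I would recall from the discussion preceding the statement that $H\backslash\Gst$ is in bijection with $\frak{S}_n$ via $\overline{(1,\tau)}\mapsto\psi(\tau)$, and that $\Gst$ acts on these cosets; under this identification the partition into $(H\backslash\Gst)^\pm$ is precisely the partition of $\frak{S}_n$ into $\mathfrak{A}_n$ and its nontrivial coset. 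So the stabilizer of the \emph{set} $(H\backslash\Gst)^+$ under the $\Gst$-action is the preimage of $\mathfrak{A}_n$ under the natural surjection $\Gst\to\frak{S}_n$ that records how $g$ permutes the cosets — explicitly, $(\sigma,\tau)$ sends $\overline{(1,\rho)}$ to $\overline{(1,\psi^{-1}(\psi(\rho)\psi(\tau))\cdot\text{(twist by }\sigma))}$, and one computes that this permutation of $\frak{S}_n$ lies in $\mathfrak{A}_n$ iff $\varepsilon(\sigma)\varepsilon(\tau)=1$. Hence $\mathrm{Stab}_{\Gst}(F_i^+)$ is the index-$2$ subgroup $N:=\{(\sigma,\tau): \varphi(\sigma)\text{ and }\psi(\tau)\text{ have the same sign}\}$, and $F_i^\pm$ are conjugate under any element outside $N$, so each is irreducible of degree $n!/2$ over the fixed field $(L_\bs L_\bt)^N$.

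Next I would identify $(L_\bs L_\bt)^N$ concretely. In characteristic $\neq 2$, we have $\Dels^\sigma=\varepsilon(\sigma)\Dels$ (for $\sigma\in\Gs$) and $\Delt^\tau=\varepsilon(\tau)\Delt$, and both are fixed by the other factor, so $\Dels/\Delt$ is fixed exactly by those $(\sigma,\tau)$ with $\varepsilon(\sigma)\varepsilon(\tau)=1$, i.e.\ by $N$; since $K(\Dels/\Delt)/K$ is a genuine quadratic extension inside $L_\bs L_\bt$ (note $(\Dels/\Delt)^2=\Dels^2/\Delt^2\in K$ is not a square in $K$) and $[L_\bs L_\bt:(L_\bs L_\bt)^N]=[\frak{S}_n\times\frak{S}_n:N]^{-1}\cdot|\Gst|$ gives $[(L_\bs L_\bt)^N:K]=2$, Galois theory forces $(L_\bs L_\bt)^N=K(\Dels/\Delt)$. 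In characteristic $2$ the same argument runs with the Berlekamp discriminant: one uses the standard fact (Berlekamp \cite{Ber76}) that $\betas$ transforms under $\sigma\in\Gs$ by $\betas^\sigma=\betas$ or $\betas+1$ according as $\varphi(\sigma)\in\mathfrak{A}_n$ or not, so $\betas+\betat$ is fixed precisely by $N$, generates a quadratic (Artin--Schreier) extension of $K$, and hence equals $(L_\bs L_\bt)^N$ by the same counting.

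Finally I would assemble the pieces: $F_i(\bs,\bt;X)$ is irreducible over $K$ of degree $n!$ by Theorem \ref{th-gen} (or Proposition \ref{prop1}), it factors over the quadratic extension $(L_\bs L_\bt)^N=K(\Dels/\Delt)$ (resp.\ $K(\betas+\betat)$) as $F_i^+(X)F_i^-(X)$ because the roots $\{u_i^g\}$ split into the two $N$-orbits $(H\backslash\Gst)^\pm$, and each factor is irreducible of degree $n!/2$ over that field because $\mathrm{Stab}_{\Gst}(u_i^g)\cdot N$ acts transitively on each orbit — equivalently because an outside element swaps the two factors, so they are conjugate and hence equi-irreducible. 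The main obstacle I anticipate is the careful bookkeeping in the first paragraph: pinning down exactly how a general $g=(\sigma,\tau)\in\Gst$ acts on the coset space $H\backslash\Gst\cong\frak{S}_n$ and verifying that the induced permutation of $\frak{S}_n$ has sign $\varepsilon(\sigma)\varepsilon(\tau)$ — this is where the $\sigma$-twist in Lemma \ref{stabil} (the factor $\varepsilon(\sigma)$ appearing in $u_i^{(\sigma,1)}$) must be tracked precisely, rather than in the subsequent field-theoretic counting, which is routine once $N$ is correctly identified.
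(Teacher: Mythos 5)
Your proposal is correct and follows essentially the same route as the paper: the paper's (much terser) proof likewise identifies the index-two subgroup $\mathrm{Stab}_{\Gst}(\Dels/\Delt)$ (your $N$), notes that it contains $H$ (and $\Gs^+\times\Gt^+$), and deduces $F_i^+(X)\mid F_i(\bs,\bt;X)$ with $F_i^+(X)\in K(\Dels/\Delt)[X]$, irreducibility of the two conjugate factors then following from the irreducibility of $F_i(\bs,\bt;X)$ over $K$. You merely make explicit the coset-action bookkeeping (the sign computation showing $g=(\sigma,\tau)$ preserves the partition $(H\backslash\Gst)^\pm$ iff $\varepsilon(\sigma)\varepsilon(\tau)=1$) that the paper leaves implicit.
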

\begin{proof}
Let $\Gs^+$ (resp. $\Gt^+$) be the subgroup of $\Gs$ 
(resp. $\Gt$) with index two which is isomorphic to the alternating 
group $\frak{A}_n$ of degree $n$. 
If char $k\neq 2$, we have a sequence of subgroups $\Gs^+\times 
\Gt^+\subset \mathrm{Stab}_{\Gs\times \Gt}(\Dels/\Delt)
\subset \Gst$ each of whose indices is two. 
It follows from $H\subset \mathrm{Stab}_{\Gst}(\Dels/\Delt)$ that $F_i^+(X)\ |\ F_i(X)$ 
and $F_i^+(X)\in K(\Dels/\Delt)$. 
The case of char $k=2$ may easily be obtained by a similar manner. 
\end{proof}

\section{Specialization of parameters into an infinite field}\label{seSpe}

Let $M\ (\supset k)$ be an infinite field. 
We assume that we always take a specialization of parameters 
$(\bs,\bt) \mapsto (\ba,\bb)\in M^n\times M^n$ so that both of 
the polynomials $f_n(\ba;X)$ and $f_n(\bb;X)$ are separable 
over $M$, (i.e. $\Delta_\ba\neq 0$ and $\Delta_\bb\neq 0$). 
Put $L_{\ba}=\mathrm{Spl}_M\, f_n(\ba;X)$ and $L_{\bb}=\mathrm{Spl}_M\, f_n(\bb;X)$; 
these splitting fields are taken in a fixed algebraic closure of $M$. 
We denote the Galois groups of $f_n(\ba;X)$ and $f_n(\bb;X)$ over $M$ 
by $G_\ba$ and $G_\bb$ respectively, that is, $G_\ba=\mathrm{Gal}(L_{\ba}/M)$ and 
$G_\bb=\mathrm{Gal}(L_{\bb}/M)$. 
We put $G_{\ba,\bb}:=\mathrm{Gal}(L_{\ba}L_{\bb}/M)$. 
Let $\bm{\alpha}:=(\alpha_1,\ldots,\alpha_n)$ (resp. $\bm{\beta}:=
(\beta_1,\ldots,\beta_n)$) be the roots of $f_n(\ba;X)$ (resp. $f_n(\bb;X)$) 
in the algebraic closure of $M$. 
Once we fix the orders of the roots as $\bm{\alpha}$ and $\bm{\beta}$, then each element 
of $G_{\ba,\bb}$ induces substitutions on the two sets of indices.  Then we may identify 
$G_{\ba,\bb}$ as a subgroup of $\Gst$. 
More precisely, we express each element $h \in G_{\ba,\bb}$ as 
$h=(\sigma, \tau) \in \Gst$ through the conditions, 
$\alpha_i^h=\alpha_{\varphi(\sigma)(i)}$ and 
$\beta_i^h=\beta_{\psi(\tau)(i)}$ for $i=1, \ldots, n$. 
We put for $g = (\sigma, \tau) \in \Gst$ 
\begin{align}
(c_0^g,\ldots,c_{n-1}^g)\,&:=\,(u_0^g(\bm{\alpha},\bm{\beta}),\ldots,
u_{n-1}^g(\bm{\alpha},\bm{\beta})),\label{utoc}\\
(d_0^g,\ldots,d_{n-1}^g)\,&:=\,(u_0^g(\bm{\beta},\bm{\alpha}),\ldots,
u_{n-1}^g(\bm{\beta},\bm{\alpha})). \nonumber
\end{align}
Then we have 
\begin{align}
\beta_{\psi(\tau)(i)}\,&=\, c_0^g + c_1^g\alpha_{\varphi(\sigma)(i)} + \cdots + 
c_{n-1}^g\alpha_{\varphi(\sigma)(i)}^{n-1}, \label{b=a's}\\
\alpha_{\varphi(\sigma)(i)}\,&=\, d_0^g + d_1^g\beta_{\psi(\tau)(i)} + \cdots + 
d_{n-1}^g\beta_{\psi(\tau)(i)}^{n-1} \label{a=b's}
\end{align}
for each $i = 1, \ldots, n$. 
For each $g\in G_{\bs,\bt}$, there exists a Tschirnhausen transformation 
from $f_n(\ba;X)$ to $f_n(\bb;X)$ over its field of Tschirnhausen 
coefficients $M(c_0^g,\ldots,c_{n-1}^g)$, and the $n$-tuple 
$(d_0^g,\ldots,d_{n-1}^g)$ gives the coefficients of a transformation 
of the inverse direction. 

From the assumption $\Delta_\ba\cdot \Delta_\bb\neq 0$, we first see 
\begin{lemma}\label{lemM}
Let $M'/M$ be a field extension. 
If $f_n(\bb;X)$ is a Tschirnhausen transformation of $f_n(\ba;X)$ over 
$M'$, then $f_n(\ba;X)$ is a Tschirnhausen transformation of $f_n(\bb;X)$ 
over $M'$. 
In particular, we have $M(c_0^g,\ldots,c_{n-1}^g)=M(d_0^g,\ldots,d_{n-1}^g)$ 
for every $g=(\sigma, \tau) \in G_{\bs,\bt}$. 
\end{lemma}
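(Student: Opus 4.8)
The plan is to reduce the asserted symmetry to the invertibility of a Vandermonde matrix and then to pin down the resulting coefficients inside $M'$ by Galois descent. Suppose $f_n(\bb;X)$ is a Tschirnhausen transformation of $f_n(\ba;X)$ over $M'$; after relabelling the roots $\bm{\beta}$ we may assume $\beta_i=p(\alpha_i)$ for $i=1,\ldots,n$, where $p(X)=c_0+c_1X+\cdots+c_{n-1}X^{n-1}\in M'[X]$. Because $f_n(\ba;X)$ and $f_n(\bb;X)$ are separable over $M$ (this is where $\Delta_\ba\cdot\Delta_\bb\neq 0$ enters), the field $N:=M'(\bm{\alpha})$ is a finite separable extension of $M'$, and $N$ contains each $\beta_i=p(\alpha_i)$, hence $M'(\bm{\beta})\subseteq N$. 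Let $E$ be the Galois closure of $N/M'$ and put $\Gamma:=\mathrm{Gal}(E/M')$, so that $E^{\Gamma}=M'$ and all of the $\alpha_i,\beta_i$ lie in $E$.

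Next I would build the inverse transformation over $E$. Separability of $f_n(\bb;X)$ makes the $\beta_i$ pairwise distinct, so the Vandermonde matrix $\bigl(\beta_i^{\,j}\bigr)_{1\le i\le n,\ 0\le j\le n-1}$ is invertible over $E$, and the linear system
\begin{align*}
d_0+d_1\beta_i+\cdots+d_{n-1}\beta_i^{\,n-1}\,=\,\alpha_i,\qquad i=1,\ldots,n,
\end{align*}
has a unique solution, giving a polynomial $q(X):=d_0+d_1X+\cdots+d_{n-1}X^{n-1}\in E[X]$ with $q(\beta_i)=\alpha_i$, and hence $f_n(\ba;X)=\prod_{i=1}^{n}\bigl(X-q(\beta_i)\bigr)$. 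It remains to show that $q\in M'[X]$.

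The key observation is that every $\gamma\in\Gamma$ acts by one and the same permutation on $\{\alpha_1,\ldots,\alpha_n\}$ and on $\{\beta_1,\ldots,\beta_n\}$: writing $\gamma(\alpha_i)=\alpha_{\rho(i)}$ and $\gamma(\beta_i)=\beta_{\pi(i)}$ (legitimate since $f_n(\ba;X),f_n(\bb;X)\in M'[X]$) and applying $\gamma$ to $\beta_i=p(\alpha_i)$ with $\gamma(c_j)=c_j$ yields $\beta_{\pi(i)}=p(\alpha_{\rho(i)})=\beta_{\rho(i)}$, so $\pi=\rho$ because the $\beta_i$ are distinct. Now apply $\gamma$ to $\alpha_i=q(\beta_i)$: on one side $\gamma(\alpha_i)=\alpha_{\pi(i)}=q(\beta_{\pi(i)})$, on the other side $\gamma(\alpha_i)=q^{\gamma}\bigl(\gamma(\beta_i)\bigr)=q^{\gamma}(\beta_{\pi(i)})$, where $q^{\gamma}$ denotes $q$ with $\gamma$ applied to its coefficients. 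Thus $q^{\gamma}$ and $q$ agree at the $n$ distinct points $\beta_1,\ldots,\beta_n$ while both have degree $<n$, so $q^{\gamma}=q$; as $\gamma\in\Gamma$ was arbitrary and $E^{\Gamma}=M'$, we conclude $q\in M'[X]$, i.e. $f_n(\ba;X)$ is a Tschirnhausen transformation of $f_n(\bb;X)$ over $M'$.

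For the last assertion I would apply the above with $M'=M(c_0^g,\ldots,c_{n-1}^g)$ and with the relabelling of roots prescribed by $g=(\sigma,\tau)$, that is, $\alpha_i':=\alpha_{\varphi(\sigma)(i)}$ and $\beta_i':=\beta_{\psi(\tau)(i)}$: relation (\ref{b=a's}) reads $\beta_i'=p(\alpha_i')$ with $p(X)=\sum_j c_j^g X^j$, and relation (\ref{a=b's}) together with the uniqueness of the Vandermonde solution identifies the coefficients of the $q$ produced above with $(d_0^g,\ldots,d_{n-1}^g)$. Hence $d_j^g\in M(c_0^g,\ldots,c_{n-1}^g)$ for all $j$, so $M(d_0^g,\ldots,d_{n-1}^g)\subseteq M(c_0^g,\ldots,c_{n-1}^g)$; since the hypotheses and the two relations (\ref{b=a's}), (\ref{a=b's}) are symmetric in $\ba$ and $\bb$, the same argument with the roles reversed gives the opposite inclusion, whence equality. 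The only place where the argument could fail is the descent step $q^{\gamma}=q$, and the observation that $\gamma$ permutes the two root sets in the same way is exactly what rescues it; everything else is linear algebra plus the separability hypothesis $\Delta_\ba\cdot\Delta_\bb\neq 0$. (One could instead run the whole argument inside the étale $M'$-algebra $M'[X]/(f_n(\ba;X))$, viewing $f_n(\bb;X)$ as the characteristic polynomial of an element there, but the Galois-descent formulation seems cleanest.)
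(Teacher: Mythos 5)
Your proof is correct, but it takes a genuinely different route from the paper's. The paper works inside the \'etale algebras $M'[X]/(f_n(\ba;X))$ and $M'[X]/(f_n(\bb;X))$: it decomposes them into direct sums of fields according to the irreducible factors, checks on each simple component (via embeddings into the compositum of splitting fields) that $X\mapsto c_0^g+c_1^gX+\cdots+c_{n-1}^gX^{n-1}$ gives a well-defined injective $M'$-algebra homomorphism, concludes it is an isomorphism by comparing dimensions, and only then inverts it and matches the inverse's coefficients with $(d_0^g,\ldots,d_{n-1}^g)$ using $\Delta_\bb\neq 0$. You instead never leave the level of roots: you solve the Vandermonde system over the Galois closure $E$ to produce $q$ with $q(\beta_i)=\alpha_i$, observe that each $\gamma\in\mathrm{Gal}(E/M')$ permutes $\{\alpha_i\}$ and $\{\beta_i\}$ by the \emph{same} permutation (because $\beta_i=p(\alpha_i)$ with $p$ defined over $M'$ and the $\beta_i$ distinct), and descend $q$ to $M'$ since $q^{\gamma}$ and $q$ agree at $n$ distinct points while having degree $<n$. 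Your descent argument is shorter and cleaner for the lemma as stated; what the paper's algebra-level argument buys is a collection of byproducts established along the way --- the one-to-one correspondence between irreducible factors of $f_n(\ba;X)$ and $f_n(\bb;X)$, the $M'$-isomorphisms of the corresponding root fields, and the equality $m=m'$ of the numbers of factors --- which are reused verbatim in the proof of Theorem \ref{throotf} and in the factorization analysis behind Table 1. Your handling of the ``in particular'' clause (specializing $M'=M(c_0^g,\ldots,c_{n-1}^g)$, identifying $q$'s coefficients with $(d_0^g,\ldots,d_{n-1}^g)$ via (\ref{a=b's}) and Vandermonde uniqueness, then symmetrizing) matches the paper's.
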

\begin{proof}
Suppose $M'$ contains $M(c_0^g,\ldots,c_{n-1}^g)$. 
We will take a ring homomorphism 
\[
\rho\,:\,M'[X]/(f_n(\bb;X))\longrightarrow M'[X]/(f_n(\ba;X)),\ X\longmapsto 
c_0^g + c_1^gX + \cdots + c_{n-1}^gX^{n-1}
\]
and show that the map $\rho$ is an isomorphism over $M'$. 

We first assume that $f_n(\ba;X)$ (resp. $f_n(\bb;X))$ splits into irreducible factors 
$g_{\mu}(X)$ (resp. $h_{\nu}(X)$) for $1\leq \mu\leq m$ 
(resp. $1\leq \nu\leq m'$) over $M'$. 
Since $f_n(\ba;X)$ and $f_n(\bb;X)$ have no multiple root, the quotient algebras 
$M'[X]/(f_n(\ba;X))$ and $M'[X]/(f_n(\bb;X))$ are semi-simple and direct sums of the fields 
$M'[X]/(g_{\mu}(X))$ and $M'[X]/(h_{\nu}(X))$, respectively. 
We put $L' = \mathrm{Spl}_{M'} f_n(\ba;X)\, \mathrm{Spl}_{M'} f_n(\bb;X)$. 
Fix $i, 0\leq i \leq n-1$, and suppose that $\alpha_{\varphi(\sigma)(i)}$ and 
$\beta_{\psi(\tau)(i)}$ are roots of the irreducible factors $g_{\mu}(X)$ 
and $h_{\nu}(X)$, respectively. Then we have embeddings $\Phi_i : M'[X]/(g_{\mu}(X))\to L'$ 
with $\Phi_i(X)=\alpha_{\varphi(\sigma)(i)}$ and $\Psi_i : M'[X]/(h_{\nu}(X)) \to L'$ with 
$\Psi_i(X)=\beta_{\psi(\tau)(i)}$. 
By the equality (\ref{b=a's}) we see 
\[
\Psi_i(X\ \mathrm{mod} (h_{\nu}(X)))=\Phi_i(c_0^g + c_1^gX + \cdots + c_{n-1}^gX^{n-1}\ 
\mathrm{mod} (g_{\mu}(X))).
\]

Thus we get an injective homomorphism from $M'[X]/(h_{\nu}(X))$ to $M'[X]/(g_{\mu}(X))$ 
by assigning $c_0^g + c_1^gX + \cdots + c_{n-1}^gX^{n-1}\ \mathrm{mod} (g_{\mu}(X))$ to 
$X\ \mathrm{mod} (h_{\nu}(X))$. 
Since the quotient algebras $M'[X]/(f_n(\ba;X))$ and $M'[X]/(f_n(\bb;X))$ are direct sums 
of the fields $M'[X]/(g_{\mu}(X))$ and $M'[X]/(h_{\nu}(X))$, respectively, the homomorphism 
$\rho$ as above is well defined and injective. 
Then this has to be an isomorphism over $M'$ because the dimensions of the two algebras 
over $M'$ coincide. Therefore, in particular, the corresponding fields $M'[X]/(g_{\mu}(X))$ 
and $M'[X]/(h_{\nu}(X))$ are isomorphic over $M'$. 
In particular, each irreducible factor $g_{\mu}(X)$ of $f_n(\ba;X)$ corresponds to an 
irreducible factor $h_{\nu}(X)$ of $f_n(\bb;X)$ in a one-to-one manner. 
The degrees of the two factors are equal.  
Hence we also see that the number of irreducible factors of $f_n(\ba;X)$ is same as that of 
$f_n(\bb;X)$, that is, $m=m'$. 

We take the inverse isomorphism from $M'[X]/(f_n(\ba;X))$ 
to $M'[X]/(f_n(\bb;X))$ over $M'$.  Then the image of $X\ \mathrm{mod} (f_n(\ba;X))$ is 
expressed as $e_0 + e_1X + \cdots + e_{n-1}X^{n-1}\ \mathrm{mod} (f_n(\bb;X))$ with 
$e_0, \ldots, e_{n-1} \in M'$. 
The homomorphisms $\Phi_i$ and $\Psi_i$ now give us 
\[
\alpha_{\varphi(\sigma)(i)} = e_0 + e_1\beta_{\psi(\tau)(i)} + \cdots + 
e_{n-1}\beta_{\psi(\tau)(i)}^{n-1} 
\]
for $i = 1, \ldots, n$. Since $\Delta_{\bb} \neq 0$, therefore, these equalities 
together with $(\ref{a=b's})$ show $d_j^g = e_j \in M'$ for $j = 0, \ldots, n-1$. 

In particular if we take $M'=M(c_0^g,\ldots,c_{n-1}^g)$ then we see 
$M(c_0^g,\ldots,c_{n-1}^g)$ $\supset$ $M(d_0^g,\ldots,d_{n-1}^g)$. 
Conversely if we take $M'=M(d_0^g,\ldots,d_{n-1}^g)$ then we have 
$M(d_0^g,\ldots,d_{n-1}^g)$ $\supset$ $M(c_0^g,\ldots,c_{n-1}^g)$. 
\end{proof}

Our main idea of this paper is to study the behavior of the field 
$M(c_0^g,\ldots,c_{n-1}^g)$ of Tschirnhausen coefficients 
from $f_n(\ba;X)$ to $f_n(\bb;X)$. 
\begin{proposition}\label{propc}
Under the assumption, $\Delta_\ba\cdot\Delta_\bb\neq 0$, we have 
the following two assertions\,{\rm :} 
\begin{align*}
&{\rm (i)}\ \ \mathrm{Spl}_{M(c_0^g,\ldots,c_{n-1}^g)} f_n(\ba;X)
=\mathrm{Spl}_{M(c_0^g,\ldots,c_{n-1}^g)} f_n(\bb;X)\ \ \textit{for each}\ 
\ g\in \Gst\, {\rm ;}\\
&{\rm (ii)}\ L_\ba L_\bb=L_\ba\, M(c_0^g,\ldots,c_{n-1}^g)=L_\bb\, 
M(c_0^g,\ldots,c_{n-1}^g)\ \ \textit{for each}\ \ g\in \Gst.
\end{align*}
\end{proposition}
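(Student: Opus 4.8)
The plan is to derive both assertions as formal consequences of Lemma~\ref{lemM} and of the two Tschirnhausen relations (\ref{b=a's}) and (\ref{a=b's}), after pinning down three elementary facts. Fix $g=(\sigma,\tau)\in\Gst$ and abbreviate $M':=M(c_0^g,\ldots,c_{n-1}^g)$. First I would record that the specialization $(\bs,\bt)\mapsto(\ba,\bb)$ is legitimate on each conjugate $u_i^g$: indeed $u_i^g\in k[\bx,\by,\Dels^{-1}]$, and the standing hypothesis $\Dela\ne 0$ guarantees that the denominator does not vanish at $(\bm{\alpha},\bm{\beta})$, so $c_i^g=u_i^g(\bm{\alpha},\bm{\beta})$ is a well-defined element of $M(\bm{\alpha},\bm{\beta})\subseteq L_{\ba}L_{\bb}$; in particular $M'\subseteq L_{\ba}L_{\bb}$. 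Second, Lemma~\ref{lemM} gives $M'=M(d_0^g,\ldots,d_{n-1}^g)$, so each $d_i^g$ also lies in $M'$ and $(d_0^g,\ldots,d_{n-1}^g)$ supplies a Tschirnhausen transformation from $f_n(\bb;X)$ to $f_n(\ba;X)$ over $M'$. Third, since $f_n(\ba;X),f_n(\bb;X)\in M[X]$ are separable we have $\mathrm{Spl}_{M'}f_n(\ba;X)=M'(\bm{\alpha})$ and $\mathrm{Spl}_{M'}f_n(\bb;X)=M'(\bm{\beta})$ inside the fixed algebraic closure of $M$.

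For (i) I would then observe that $\varphi(\sigma)$ and $\psi(\tau)$ are permutations of $\{1,\ldots,n\}$, so as $i$ ranges over $1,\ldots,n$ the equality (\ref{b=a's}) expresses every root $\beta_j$ as a polynomial in a suitable root $\alpha_k$ with coefficients among $c_0^g,\ldots,c_{n-1}^g\in M'$; hence $M'(\bm{\beta})\subseteq M'(\bm{\alpha})$. The symmetric argument applied to (\ref{a=b's}), using that $d_0^g,\ldots,d_{n-1}^g\in M'$ by the second fact above, gives $M'(\bm{\alpha})\subseteq M'(\bm{\beta})$, and (i) follows.

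For (ii) I would argue, for the equality $L_{\ba}L_{\bb}=L_{\ba}M'$, that $L_{\ba}M'=M(\bm{\alpha},c_0^g,\ldots,c_{n-1}^g)$ is contained in $L_{\ba}L_{\bb}$ by the first fact above, while conversely $L_{\ba}L_{\bb}=M(\bm{\alpha},\bm{\beta})$ is contained in $L_{\ba}M'$ because (\ref{b=a's}) places every $\beta_j$ in $M'(\bm{\alpha})\subseteq L_{\ba}M'$. Replacing $(c_i^g)$ by $(d_i^g)$, which still generates $M'$, and (\ref{b=a's}) by (\ref{a=b's}), yields $L_{\ba}L_{\bb}=L_{\bb}M'$ in exactly the same way.

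The only points that genuinely need justification are the legitimacy of the specialization $u_i^g\mapsto c_i^g$ and the identity $M'=M(d_0^g,\ldots,d_{n-1}^g)$; both are precisely where the hypothesis $\Dela\cdot\Delb\ne 0$ is used, and both are already in hand — the former from the construction in Section~\ref{seSpe} (in particular the definition (\ref{utoc})), the latter from Lemma~\ref{lemM}. Granting these, there is no genuinely hard step: the proposition reduces to bookkeeping with the relations (\ref{b=a's}) and (\ref{a=b's}) and the fact that $\varphi(\sigma),\psi(\tau)$ are bijections of $\{1,\ldots,n\}$.
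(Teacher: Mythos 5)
Your proof is correct and follows essentially the same route as the paper: both rest on Lemma~\ref{lemM} (which supplies $M(c_0^g,\ldots,c_{n-1}^g)=M(d_0^g,\ldots,d_{n-1}^g)$, i.e.\ the Tschirnhausen transformation in both directions is defined over $M'$) and then deduce (ii) from (i) together with $M'\subseteq L_{\ba}L_{\bb}$. The only cosmetic difference is that you obtain (i) directly from the root relations (\ref{b=a's}) and (\ref{a=b's}), whereas the paper phrases the same content as an $M'$-isomorphism of the quotient algebras $M'[X]/(f_n(\ba;X))$ and $M'[X]/(f_n(\bb;X))$.
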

\begin{proof}
Put $M'=M(c_0^g,\ldots,c_{n-1}^g)$. 
Then, by Lemma \ref{lemM}, $M'[X]/(f_n(\ba;X))$ and $M'[X]/(f_n(\bb;X))$ 
are isomorphic over $M'$. Hence follows the assertion (i). 
By (i) we see that 
$L_\ba\, M(c_0^g,\ldots,c_{n-1}^g)=L_\bb\, M(c_0^g,\ldots,c_{n-1}^g)$. 
Thus the assertion (ii) also holds. 
\end{proof}

It follows from Proposition \ref{prop1} and Proposition \ref{propLL} that, 
for a fixed $j, (0\leq j\leq n-1)$, we have 
\begin{align}
K(u_0^g,\ldots,u_{n-1}^g)\, &=\, K(u_j^g)\quad \mathrm{for}\quad g\in \Gst \label{equ},\\
L_\bs L_\bt\, &=\, K(u_j^g\ |\ \overline{g}\in H\backslash \Gst).\nonumber
\end{align}
and $[K(u_j^g) : K]=n!$. 
After the specialization as in (\ref{utoc}) we may only have inclusion relations 
\begin{align*}
M(c_0^g,\ldots,c_{n-1}^g)\, &\supset\, M(c_j^g)\quad \mathrm{for}\quad g\in \Gst,\\
L_\ba L_\bb\, &\supset \, M(c_j^g\ |\ \overline{g}\in H\backslash \Gst). 
\end{align*}
Whether the equality $M(c_0^g,\ldots,c_{n-1}^g)=M(c_j^g)$ holds or not depends on 
the specialization $(\bs,\bt)\mapsto (\ba,\bb)\in M^n\times M^n$. 
By (\ref{equ}) there exists $P_{i,j}(\bs,\bt;X)\in K[X]$ such that 
\begin{align*}
u_i=P_{i,j}(\bs,\bt;u_j)\quad \mathrm{and}\quad \mathrm{deg}_X(P_{i,j}(\bs,\bt;X))<n!. 
\end{align*}
Then we take polynomials $P_{i,j}^0(\bs,\bt;X)\in k[\bs,\bt][X]$ and 
$D_{i,j}^0(\bs,\bt)\in k[\bs,\bt]$ which satisfy 
\begin{align}
u_i=\frac{1}{D_{i,j}^0(\bs,\bt)}\, P_{i,j}^0(\bs,\bt;u_j),\quad \mathrm{and}\quad 
\mathrm{deg}_X(P_{i,j}^0(\bs,\bt;X))<n! \label{eqD0}
\end{align}
by extracting the minimal multiple of the denominators of the coefficients 
of the polynomial $P_{i,j}(\bs,\bt;X)$ in $X$. 
Then the following lemma is clear. 
\begin{lemma}\label{lemD0}
For a fixed $j, 0\leq j \leq n-1$, and $\ba,\bb\in M^n$, the condition
$D_{i,j}^0(\ba,\bb)\neq 0$ for $i=0,\ldots,n-1$ implies 
$M(c_0^g,\ldots,c_{n-1}^g)=M(c_j^g)$ for each $g\in \Gst$. 
\end{lemma}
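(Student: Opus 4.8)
The statement is essentially a direct consequence of the way the polynomials $P_{i,j}^0(\bs,\bt;X)$ and $D_{i,j}^0(\bs,\bt)$ were constructed, so the plan is mostly to trace through the specialization carefully. First I would recall the identity $(\ref{eqD0})$, namely $u_i = D_{i,j}^0(\bs,\bt)^{-1}\, P_{i,j}^0(\bs,\bt;u_j)$ with $P_{i,j}^0\in k[\bs,\bt][X]$ of $X$-degree less than $n!$ and $D_{i,j}^0\in k[\bs,\bt]$. Applying any $g\in\Gst$ to this identity (the coefficients $D_{i,j}^0(\bs,\bt)$ and the coefficients of $P_{i,j}^0(\bs,\bt;X)$ lie in $K$ and so are fixed by $g$) gives $u_i^g = D_{i,j}^0(\bs,\bt)^{-1}\, P_{i,j}^0(\bs,\bt;u_j^g)$ in $L_\bs L_\bt$.

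Next I would specialize $(\bs,\bt)\mapsto(\ba,\bb)$ via $(\ref{utoc})$, i.e. evaluate $u_\ell^g$ at $(\bm\alpha,\bm\beta)$ to obtain $c_\ell^g$. The hypothesis $\Dela\cdot\Delb\neq 0$ guarantees $\Dels$ and $\Delt$ are invertible after specialization, so every $u_\ell^g(\bm\alpha,\bm\beta)=c_\ell^g$ is a well-defined element of $L_\ba L_\bb$; one should note this is exactly the reason the separability assumption is imposed throughout Section 3. Under this specialization the displayed identity becomes $D_{i,j}^0(\ba,\bb)\cdot c_i^g = P_{i,j}^0(\ba,\bb;c_j^g)$, an identity in $L_\ba L_\bb$, for each $i=0,\ldots,n-1$ and each $g\in\Gst$. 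If $D_{i,j}^0(\ba,\bb)\neq 0$ for all $i$, we may divide and obtain $c_i^g = D_{i,j}^0(\ba,\bb)^{-1}\,P_{i,j}^0(\ba,\bb;c_j^g)$, which expresses each $c_i^g$ as a polynomial in $c_j^g$ with coefficients in $M$ (since $\ba,\bb\in M^n$ and $P_{i,j}^0, D_{i,j}^0$ have coefficients in $k\subset M$). Hence $c_i^g\in M(c_j^g)$ for every $i$, giving the inclusion $M(c_0^g,\ldots,c_{n-1}^g)\subset M(c_j^g)$; the reverse inclusion is trivial, so the two fields are equal for each $g\in\Gst$.

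There is essentially no genuine obstacle here — the lemma is flagged as ``clear'' in the text — but the one point that requires a word of care is the legitimacy of specializing the identity $(\ref{eqD0})$: one must observe that the ring homomorphism $k[\bx,\by,\Dels^{-1}]\to L_\ba L_\bb$ sending $x_i\mapsto\alpha_i$, $y_i\mapsto\beta_i$ (well defined because $\Dela\neq 0$) carries $u_\ell^g$ to $c_\ell^g$ and is compatible with the polynomial relation over $K$, so that an identity valid in $L_\bs L_\bt$ descends to one in $L_\ba L_\bb$. Once this is granted, the argument is the short division computation above, and I would keep the proof to two or three sentences, emphasizing only that $D_{i,j}^0(\ba,\bb)\neq 0$ is precisely what licenses solving for $c_i^g$ in terms of $c_j^g$.
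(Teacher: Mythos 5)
Your argument is correct and is exactly the routine verification the paper has in mind when it says the lemma ``is clear'': apply $g$ to the identity $u_i = D_{i,j}^0(\bs,\bt)^{-1}P_{i,j}^0(\bs,\bt;u_j)$, specialize via the evaluation map (well defined since $\Dela\Delb\neq 0$), and divide by the nonzero $D_{i,j}^0(\ba,\bb)$ to get $c_i^g\in M(c_j^g)$. No gaps; your remark on the legitimacy of the specialization is the only point worth recording.
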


After the specialization as in (\ref{utoc}), we also utilize the polynomial
\begin{align*}
F_i(\ba,\bb;X)\, =\, \prod_{\overline{g}\in H\backslash \Gst} (X-c_i^g)\in M[X], \quad 
(i=0,\ldots,n-1)
\end{align*}
of degree $n!$ which is not necessary irreducible. 

\begin{lemma}\label{lemnoMult}
If $F_j(\ba,\bb;X)$ has no multiple root for a fixed $j, 0\leq j \leq n-1$ 
and $\ba,\bb\in M^n$ with $\Delta_\ba\cdot\Delta_\bb\neq 0$ then 
$D_{i,j}^0(\ba,\bb)\neq 0$ for $i=0,\ldots,n-1$. 
\end{lemma}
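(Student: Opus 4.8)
The plan is to argue by contraposition: assume $D_{i,j}^{0}(\ba,\bb)=0$ for some index $i$, and produce a multiple root of $F_j(\ba,\bb;X)$. Recall from (\ref{eqD0}) that over the generic field $K=k(\bs,\bt)$ we have the polynomial identity $D_{i,j}^{0}(\bs,\bt)\,u_i = P_{i,j}^{0}(\bs,\bt;u_j)$, and that this identity is inherited by every conjugate: since $u_i^g$ and $u_j^g$ are obtained by applying $g\in\Gst$ to $u_i$ and $u_j$, and $D_{i,j}^{0}$, $P_{i,j}^{0}$ have coefficients in the fixed field $k[\bs,\bt]$, we get $D_{i,j}^{0}(\bs,\bt)\,u_i^g = P_{i,j}^{0}(\bs,\bt;u_j^g)$ for all $\overline g\in H\backslash\Gst$. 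Specializing $(\bs,\bt)\mapsto(\ba,\bb)$ via (\ref{utoc}) yields
\begin{align}
D_{i,j}^{0}(\ba,\bb)\,c_i^g = P_{i,j}^{0}(\ba,\bb;c_j^g)\qquad\text{for all }\overline g\in H\backslash\Gst.\label{speceq}
\end{align}

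First I would observe what (\ref{speceq}) forces when $D_{i,j}^{0}(\ba,\bb)=0$: the right-hand side vanishes, so $c_j^g$ is a root of the nonzero polynomial $P_{i,j}^{0}(\ba,\bb;X)\in M[X]$ for \emph{every} coset $\overline g$. But $P_{i,j}^{0}(\ba,\bb;X)$ has degree $<n!$ (the degree bound in (\ref{eqD0}) is preserved under specialization, since leading terms in $X$ can only drop), while the $c_j^g$ range over the $n!$ roots of $F_j(\ba,\bb;X)=\prod_{\overline g}(X-c_j^g)$. A polynomial of degree $<n!$ cannot have $n!$ \emph{distinct} roots; hence the multiset $\{c_j^g\}$ has a repetition, i.e. $c_j^{g_1}=c_j^{g_2}$ for two distinct cosets $\overline{g_1}\neq\overline{g_2}$, and that common value is a multiple root of $F_j(\ba,\bb;X)$. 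This contradicts the hypothesis that $F_j(\ba,\bb;X)$ has no multiple root, so $D_{i,j}^{0}(\ba,\bb)\neq 0$ for every $i$.

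The one point that needs care — and the place I expect the only genuine subtlety — is the claim that $P_{i,j}^{0}(\ba,\bb;X)$ is \emph{not the zero polynomial} after specialization. If every coefficient of $P_{i,j}^{0}(\bs,\bt;X)$ in $X$ happened to lie in the ideal vanishing at $(\ba,\bb)$, the argument above would be vacuous. To handle this I would invoke the minimality built into the construction of $D_{i,j}^{0}$ and $P_{i,j}^{0}$ in (\ref{eqD0}): by construction $\gcd\bigl(D_{i,j}^{0}(\bs,\bt),\,\text{content of }P_{i,j}^{0}(\bs,\bt;X)\bigr)$ is a unit in $k[\bs,\bt]$ (otherwise we could cancel a common factor, contradicting minimality of the denominator). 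Thus $D_{i,j}^{0}(\ba,\bb)=0$ cannot force all coefficients of $P_{i,j}^{0}(\ba,\bb;X)$ to vanish simultaneously — at $(\ba,\bb)$ at least one coefficient survives — so $P_{i,j}^{0}(\ba,\bb;X)\ne 0$, and the degree of this surviving polynomial is still $<n!$. Once this is pinned down, the counting step is immediate and the proof closes. (One should also note $P_{i,j}^{0}(\ba,\bb;X)$ is nonconstant, or rather: if it were a nonzero constant then $c_j^g$ would be a root of a nonzero constant, already absurd, so in fact the conclusion follows a fortiori; the clean uniform statement is simply that a nonzero polynomial of degree $<n!$ has fewer than $n!$ roots.)
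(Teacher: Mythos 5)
Your core argument coincides with the paper's: from $D_{i,j}^{0}(\ba,\bb)=0$ and the specialized identity $D_{i,j}^{0}(\ba,\bb)\,c_i^g=P_{i,j}^{0}(\ba,\bb;c_j^g)$ one concludes that all $n!$ values $c_j^g$ --- pairwise distinct by the no-multiple-root hypothesis --- are roots of $P_{i,j}^{0}(\ba,\bb;X)$, a polynomial of degree less than $n!$ in $X$. Whether this is packaged as a contradiction (the paper) or a contraposition (you) is immaterial.

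Where you go beyond the paper is precisely the subtlety you single out, and there your patch does not work. Minimality of the denominator does give that $D_{i,j}^{0}(\bs,\bt)$ is coprime in $k[\bs,\bt]$ to the content of $P_{i,j}^{0}(\bs,\bt;X)$. But $k[\bs,\bt]$ is a polynomial ring in $2n\geq 6$ variables, and coprimality of polynomials in more than one variable does not prevent simultaneous vanishing at a point: $s_1$ and $t_1$ are coprime, yet both vanish wherever $s_1=t_1=0$. Worse, the common zero locus of \emph{all} the coefficients of $P_{i,j}^{0}(\bs,\bt;X)$ may strictly contain the zero locus of their gcd (it can have components of codimension at least two on which that gcd is a unit), so ``the gcd with $D_{i,j}^{0}$ is a unit'' does not yield ``some coefficient of $P_{i,j}^{0}$ survives at $(\ba,\bb)$.'' Your argument therefore does not exclude the degenerate case $P_{i,j}^{0}(\ba,\bb;X)\equiv 0$, which is exactly the case in which the degree count gives nothing. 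To be fair, the paper's own proof passes over this same nonvanishing in silence; but since you explicitly flagged it as the one point needing care, you should know that the justification you supply for it is not valid as stated, and a correct treatment would need a different argument (for instance, in the cubic case one can see the nonvanishing directly from the explicit formulas for $D_{1,2}^{0}$ and the $h_i$).
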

\begin{proof}
By (\ref{eqD0}), if $D_{i,j}^0(\ba,\bb)=0$ for an $i$, then we should have 
$P_{i,j}^0(\ba,\bb;c_j^g)=0$ for $g\in\Gst$. 
From the assumption $c_j^g\neq c_j^h$ $(\overline{g}\neq\overline{h})$, 
$n!$ different $c_j^g$ satisfy the equality $P_{i,j}^0(\ba,\bb;c_j^g)=0$ of 
degree less than $n!$. This is a contradiction. 
\end{proof}

\begin{proposition}
For a fixed $j, 0\leq j \leq n-1$, and $\ba,\bb \in M^n$ with 
$\Delta_\ba\cdot\Delta_\bb\neq 0$, suppose $D_{i,j}^0(\ba,\bb)\neq 0$ 
for $i= 0,\ldots, n-1$.  Then $F_j(\ba,\bb;X)$ has no multiple root. 
\end{proposition}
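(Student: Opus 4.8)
The statement to prove is the converse of Lemma~\ref{lemnoMult}: assuming $D_{i,j}^0(\ba,\bb)\neq 0$ for all $i$, conclude that $F_j(\ba,\bb;X)$ has no multiple root. Equivalently, the $n!$ specialized values $c_j^g$, for $\overline{g}\in H\backslash\Gst$, are pairwise distinct. The plan is to argue by contradiction: suppose $c_j^g=c_j^h$ for two cosets $\overline{g}\neq\overline{h}$, and derive that one of the $D_{i,j}^0(\ba,\bb)$ must vanish — or, more directly, that this forces the full tuples to coincide and then that the corresponding permutations coincide, contradicting $\overline{g}\neq\overline{h}$.

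Here is how I would carry this out. First, from the hypothesis $D_{i,j}^0(\ba,\bb)\neq 0$ together with the identity (\ref{eqD0}), the relation $u_i=\tfrac{1}{D_{i,j}^0(\bs,\bt)}P_{i,j}^0(\bs,\bt;u_j)$ specializes cleanly under $(\bs,\bt)\mapsto(\ba,\bb)$ to give $c_i^g=\tfrac{1}{D_{i,j}^0(\ba,\bb)}P_{i,j}^0(\ba,\bb;c_j^g)$ for every $g\in\Gst$ and every $i$; this is exactly the content of Lemma~\ref{lemD0}, which gives $M(c_0^g,\ldots,c_{n-1}^g)=M(c_j^g)$, but I want the sharper pointwise consequence. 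Thus if $c_j^g=c_j^h$ for some $\overline{g}\neq\overline{h}$, applying these $n-1$ specialized polynomial identities (one for each $i\neq j$) yields $c_i^g=c_i^h$ for all $i=0,\ldots,n-1$ as well, i.e. the two Tschirnhausen coefficient tuples agree: $(c_0^g,\ldots,c_{n-1}^g)=(c_0^h,\ldots,c_{n-1}^h)$.

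Now I use the defining relation (\ref{b=a's}). Write $g=(\sigma,\tau)$ and $h=(\sigma',\tau')$. The equality of tuples means the single polynomial $c_0+c_1X+\cdots+c_{n-1}X^{n-1}$ (common value) sends, via (\ref{b=a's}), the ordered root vector attached to $g$ to the ordered root vector attached to $g$, and likewise for $h$. Concretely, for each index $i$ we have $c_0+c_1\alpha_{\varphi(\sigma)(i)}+\cdots=\beta_{\psi(\tau)(i)}$ and $c_0+c_1\alpha_{\varphi(\sigma')(i)}+\cdots=\beta_{\psi(\tau')(i)}$. Since $H\backslash\Gst$ is represented by elements of the form $(1,\tau)$, I may normalize $\sigma=\sigma'=1$, so $\varphi(\sigma)=\varphi(\sigma')=1$; then the two displays read $T(\alpha_i)=\beta_{\psi(\tau)(i)}$ and $T(\alpha_i)=\beta_{\psi(\tau')(i)}$ for the common Tschirnhausen map $T$, hence $\beta_{\psi(\tau)(i)}=\beta_{\psi(\tau')(i)}$ for all $i$. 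Because $\Delta_\bb\neq 0$ the $\beta$'s are distinct, so $\psi(\tau)=\psi(\tau')$, hence $\tau=\tau'$ (as $\psi$ is an anti-isomorphism), hence $\overline{g}=\overline{(1,\tau)}=\overline{(1,\tau')}=\overline{h}$, a contradiction. Therefore the $c_j^g$ are pairwise distinct and $F_j(\ba,\bb;X)$ is separable.

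The only genuinely delicate point is making sure the specialized identity $c_i^g=\tfrac{1}{D_{i,j}^0(\ba,\bb)}P_{i,j}^0(\ba,\bb;c_j^g)$ is legitimate: the generic identity (\ref{eqD0}) holds in $K[\bx,\by,\Dels^{-1}]$ (or after clearing denominators in $k[\bs,\bt][X]$), and one must check that evaluation at $(\bs,\bt)=(\ba,\bb)$ together with $\bx\mapsto\bm\alpha$, $\by\mapsto\bm\beta$ commutes with all the operations involved. This is where $\Delta_\ba\neq 0$ enters a second time: it guarantees $\Dels$ specializes to the nonzero $\Delta_\ba$, so $u_j$ and the $u_i$ (which a priori live in $k[\bx,\by,\Dels^{-1}]$) have well-defined specializations, and the polynomial relation (\ref{eqD0}), being an identity of regular functions on the locus $\Dels\neq 0$, persists under the specialization provided $D_{i,j}^0(\ba,\bb)\neq 0$. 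I expect this bookkeeping — rather than the group-theoretic endgame — to be the main obstacle, but it is routine once the right ring $k[\bs,\bt,\Delta_\bs^{-1},\ldots][X]$ is fixed.
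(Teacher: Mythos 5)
Your proof is correct and follows essentially the same route as the paper's: both use the nonvanishing of $D_{i,j}^0(\ba,\bb)$ to upgrade $c_j^{(1,\tau)}=c_j^{(1,\tau')}$ to equality of the full coefficient tuples via the specialized identity $c_i^g=\tfrac{1}{D_{i,j}^0(\ba,\bb)}P_{i,j}^0(\ba,\bb;c_j^g)$, and then recover $\beta_{\psi(\tau)(i)}=\beta_{\psi(\tau')(i)}$ for all $i$ (you via the forward relation (\ref{b=a's}), the paper via the inverse Vandermonde matrix — the same computation) and conclude $\tau=\tau'$ from $\Delta_\bb\neq 0$. The specialization bookkeeping you flag is handled implicitly in the paper exactly as you describe, by clearing denominators so the identity lives in $k[\bx,\by,\Dels^{-1}]$ and specializes under $\Delta_\ba\neq 0$.
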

\begin{proof}
We first note that $\{c_j^g\ |\ \overline{g}\in H\backslash \Gst\}
=\{c_j^{(1,\tau)}\ |\ \tau\in \Gt\}$. 
It follows from the condition, $\Delta_\ba\cdot\Delta_\bb\neq 0$, 
the roots $\beta_1,\ldots,\beta_n$ of $f_n(\bb;X)$ are distinct. 
For $\tau, \tau' \in \Gt$, therefore, $(\beta_{\psi(\tau)(1)},\ldots,\beta_{\psi(\tau)(n)})
=(\beta_{\psi(\tau')(1)},\ldots,\beta_{\psi(\tau')(n)})$ if and only if $\tau=\tau'$. 
Since $\mathbf{y}^{\tau}=(y_{\psi(\tau)(1)},\ldots,y_{\psi(\tau)(n)})$, 
we have, by the definition, 
\begin{align*}
\left(\begin{array}{c}
c_0^{(1,\tau)}\\ 
c_1^{(1,\tau)}\\ 
\vdots \\ 
c_{n-1}^{(1,\tau)}
\end{array}\right)
\,=\,\left(
\begin{array}{ccccc}
1 & \alpha_1 & \alpha_1^2 & \cdots & \alpha_1^{n-1}\\ 
1 &\alpha_2 & \alpha_2^2 & \cdots & \alpha_2^{n-1}\\
\vdots & \vdots & \vdots & \ddots & \vdots\\
1 & \alpha_n & \alpha_n^2 & \cdots & \alpha_n^{n-1}
\end{array}\right)^{-1}
\left(\begin{array}{c}\beta_{\psi(\tau)(1)}\\ 
\beta_{\psi(\tau)(2)}\\ 
\vdots \\ 
\beta_{\psi(\tau)(n)}
\end{array}\right). 
\end{align*}
We also have the equation for $\tau'$ if we replace $\tau$ by $\tau'$. 
On the other hand, we have 
\begin{align*}
c_i^{(1,\tau)}\,&=\,\frac{1}{D_{i,j}^0(\ba,\bb)}\, P_{i,j}^0(\ba,\bb;c_j^{(1,\tau)}),\\
c_i^{(1,\tau')}\,&=\,\frac{1}{D_{i,j}^0(\ba,\bb)}\, P_{i,j}^0(\ba,\bb;c_j^{(1,\tau')}),
\end{align*}
from the condition $D_{i,j}^0(\ba,\bb)\neq 0$ for $i=0,\ldots,n-1$. 
If $c_j^{(1,\tau)}=c_j^{(1,\tau')}$, therefore, we have 
$(c_0^{(1,\tau)},\ldots,c_{n-1}^{(1,\tau)})
=(c_0^{(1,\tau')},\ldots,c_{n-1}^{(1,\tau')})$. 
Hence from the above equalities, we have $(\beta_{\psi(\tau)(1)},\ldots,\beta_{\psi(\tau)(n)})
=(\beta_{\psi(\tau')(1)},\ldots,\beta_{\psi(\tau')(n)})$. 
Thus we see $\tau=\tau'$ and the assertion of the proposition. 
\end{proof}
Before we go on farther analysis, we explain the actions of $\Gst$ and $\Gab$ on the set 
of values $\{c_i^g\ |\ \overline{g}\in H\backslash \Gst\}, 0\leq i \leq n-1$, where 
$g \in \Gst$ runs over a set of representatives of the cosets $H\backslash \Gst$. 
We defined $c_i^g$ by $c_i^g = u_i^g(\bm{\alpha}, \bm{\beta})$. 
Since the rational function $u_i(\mathbf{x}, \mathbf{y})$ of $2n$ variables 
$\mathbf{x}=(x_1,\ldots, x_n),\, \mathbf{y}=(y_1,\ldots, y_n)$ belongs to 
$L_\bs L_\bt=k(\mathbf{x}, \mathbf{y})$, the Galois group 
$\Gst = \mathrm{Gal}(L_\bs L_\bt/k(\mathbf{s}, \mathbf{t}))$ naturally acts on 
$u_i(\mathbf{x}, \mathbf{y})$; since $g$ induces substitutions on the sets 
$\{x_1,\ldots, x_n\}$ and $\{y_1,\ldots, y_n\}$, we express $g=(\sigma, \tau)$ and 
$u_i^g(\mathbf{x}, \mathbf{y})=u_i(\mathbf{x}^g, \mathbf{y}^g)$ with 
$\mathbf{x}^g=(x_{\varphi(\sigma)(1)},\ldots, x_{\varphi(\sigma)(n)})$ and 
$\mathbf{y}^g=(y_{\psi(\tau)(1)},\ldots, x_{\psi(\tau)(n)})$. 
However, $\Gst$ does not directly act on the set of values $\{c_i^g\ |\ 
\overline{g}\in H\backslash \Gst\}$ of $u_i^g$.  
We consider the collection of the values $c_i^g$, $\overline{g}\in H\backslash \Gst$, as 
a function $\mathbf{c}_i(g):=c_i^g$ defined on the cosets $H\backslash \Gst$ because $u_i^g$ 
and hence $c_i^g$ depend only on the coset $Hg$. Then for $h \in \Gst$, 
$\mathbf{c}_i^h(g):=c_i^{gh}$ is the composition of $\mathbf{c}_i$ with the translation on 
$H\backslash \Gst$ by $h$. Therefore each $h \in \Gst$ induces a substitution of the set 
of values $\{c_i^g\ |\ \overline{g}\in H\backslash \Gst\}$. This is the way by which 
$\Gst$ acts on the set. 
Hence, in particular, its action is transitive. 
Thus if $\#\{c_i^g\ |\ \overline{g}\in H\backslash \Gst\}=n!$ then 
we have $\mathrm{Stab}_{\Gst}(c_i^g)=\mathrm{Stab}_{\Gst}(u_i^g(\mathbf{x}, \mathbf{y}))
=g^{-1}Hg$ because $\#(H\backslash \Gst)=n!$. 

As for $\Gab$, the situation is different. It acts on the set of values 
$\{c_i^g\ |\ \overline{g}\in H\backslash \Gst\}$ as the Galois group 
$\mathrm{Gal}(L_\ba L_\bb/M)$ because every $c_i^g$ is contained in the field $L_\ba L_\bb$. 
Under the assumption, $\Delta_\ba\cdot\Delta_\bb\neq 0$, furthermore, we regarded $\Gab$ 
as a subgroup of $\Gst$. 
\begin{lemma}
The action of $\Gab$ on the set
$\{c_i^g\ |\ \overline{g}\in H\backslash \Gst\}$ as the Galois group 
$\mathrm{Gal}(L_\ba\,L_\bb/M)$ coincides with the one as a subgroup of $\Gst$ through 
the action on the set of cosets $H\backslash \Gst$. 
\end{lemma}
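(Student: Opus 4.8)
The plan is to show that the two \emph{a priori} different actions of $\Gab$ on the set $\{c_i^g\ |\ \overline{g}\in H\backslash \Gst\}$ agree element by element. On the one hand, $\Gab=\mathrm{Gal}(L_\ba L_\bb/M)$ acts on this set simply because each $c_i^g\in L_\ba L_\bb$, so $h\in\Gab$ sends $c_i^g$ to $(c_i^g)^h$ (the image under the field automorphism $h$). On the other hand, having fixed the orders of the roots $\bm{\alpha},\bm{\beta}$, we have identified $\Gab$ with a subgroup of $\Gst$ via $\alpha_i^h=\alpha_{\varphi(\sigma)(i)}$, $\beta_i^h=\beta_{\psi(\tau)(i)}$ for $h=(\sigma,\tau)$; under this identification $\Gst$ acts on the index set $H\backslash\Gst$ by right translation, and the discussion preceding the lemma packages the values as a function $\mathbf{c}_i(g):=c_i^g$ on $H\backslash\Gst$, with $h$ acting by $\mathbf{c}_i^h(g):=c_i^{gh}$. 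So what must be proved is the identity
\begin{align*}
(c_i^g)^h \;=\; c_i^{gh}\qquad\text{for all }g\in\Gst,\ h\in\Gab\subset\Gst.
\end{align*}

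First I would unwind the definition $c_i^g=u_i^g(\bm{\alpha},\bm{\beta})$. Write $g=(\sigma_1,\tau_1)$ and $h=(\sigma_2,\tau_2)$. The rational function $u_i(\mathbf{x},\mathbf{y})\in k(\mathbf{x},\mathbf{y})=L_\bs L_\bt$ is evaluated at the specialization $\mathbf{x}\mapsto\bm{\alpha}$, $\mathbf{y}\mapsto\bm{\beta}$; more precisely $c_i^g=u_i^g(\bm\alpha,\bm\beta)$ is obtained by specializing the polynomial/rational expression $u_i^g(\mathbf{x},\mathbf{y})=u_i(\mathbf{x}^g,\mathbf{y}^g)$, which by definition of the $\Gst$-action involves only the entries $x_{\varphi(\sigma_1)(\cdot)}$, $y_{\psi(\tau_1)(\cdot)}$, and the denominator $\Dels$; note $\Dela=\prod_{i<j}(\alpha_j-\alpha_i)\neq 0$ by hypothesis, so the specialization is legitimate. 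The key step is then: applying the field automorphism $h\in\Gab$ to the value $c_i^g$ is the same as first permuting the roots appearing in $u_i^g$ according to $h$ and then specializing — because $h$ is a ring homomorphism fixing $M\supset k$, it commutes with every polynomial operation, and $h$ sends $\alpha_j\mapsto\alpha_{\varphi(\sigma_2)(j)}$, $\beta_j\mapsto\beta_{\psi(\tau_2)(j)}$. Thus $(c_i^g)^h$ equals $u_i(\mathbf{x}^g,\mathbf{y}^g)$ evaluated at $\alpha_j\mapsto\alpha_{\varphi(\sigma_2)(j)}$, $\beta_j\mapsto\beta_{\psi(\tau_2)(j)}$; tracking the index composition, $\varphi$ and $\psi$ being anti-homomorphisms gives $\varphi(\sigma_1)\circ\varphi(\sigma_2)=\varphi(\sigma_2\sigma_1)=\varphi((\sigma_1,\tau_1)(\sigma_2,\tau_2)$'s first component$)$, and similarly for $\psi$, so the result is exactly $u_i^{gh}(\bm\alpha,\bm\beta)=c_i^{gh}$. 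One should also check the degenerate possibility that distinct cosets give equal values: even then the two actions still agree, because the identity $(c_i^g)^h=c_i^{gh}$ is an identity of values, independent of whether the $c_i^g$ are distinct; this is why the statement is phrased for the set of values and needs no separability hypothesis on $F_i(\ba,\bb;X)$.

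The main obstacle is bookkeeping with the anti-isomorphisms $\varphi,\psi$ and the side on which $\Gst$ acts: one must be careful that ``$h$ acts on the right'' on $k(\mathbf{x},\mathbf{y})$, so that $u_i^{gh}$ means $(u_i^g)^h$ with the composition $\varphi(\sigma_1)\varphi(\sigma_2)$ taken in the correct order, and that the specialization $\mathbf{x}\mapsto\bm\alpha$ is compatible with the identification of $\Gab$ inside $\Gst$ via the \emph{same} $\varphi,\psi$. Concretely I would verify the index identity $\bigl(\varphi(\sigma_1)(j)\bigr)\text{ then apply }\varphi(\sigma_2)$ against what the coset $H(\sigma_1\sigma_2$-type product$)$ prescribes, on one representative, and then note that everything in sight (the value $c_i^g$, the group law, the $\varphi,\psi$ rules) depends on $g$ only through its coset $Hg$, by Lemma \ref{stabil}. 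Once the identity $(c_i^g)^h=c_i^{gh}$ is established, the lemma is immediate: the orbit-and-stabilizer structure of the two actions on $\{c_i^g\mid\overline g\in H\backslash\Gst\}$ is literally the same function, so the two actions of $\Gab$ coincide.
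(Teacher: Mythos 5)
Your proof is correct and follows essentially the same route as the paper: both reduce the lemma to the identity $(c_i^g)^h=c_i^{gh}$, obtained by applying the field automorphism $h$ to the defining expression of $c_i^g$ in the roots $\bm{\alpha},\bm{\beta}$ (the paper uses the explicit Vandermonde matrix equation, you evaluate $u_i^g(\bm{\alpha},\bm{\beta})$ directly) and then invoking the anti-isomorphism property of $\varphi,\psi$. The only blemish is the displayed chain $\varphi(\sigma_1)\circ\varphi(\sigma_2)=\varphi(\sigma_2\sigma_1)=\varphi(\sigma_1\sigma_2)$; the composition that actually arises when $h$ acts after $g$ is $\varphi(\sigma_2)\circ\varphi(\sigma_1)=\varphi(\sigma_1\sigma_2)$, which is exactly what the conclusion needs, so this is a transcription slip in the intermediate formula rather than a gap.
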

\begin{proof}
For $g=(\sigma, \tau) \in \Gst$, we have by the definition 
\begin{align*}
\left(\begin{array}{c}c_0^g\\ c_1^g\\ \vdots \\ c_{n-1}^g \end{array}\right)
=\left(
\begin{array}{ccccc}
1 & \alpha_{\varphi(\sigma)(1)} & \alpha_{\varphi(\sigma)(1)}^2 & \cdots & 
\alpha_{\varphi(\sigma)(1)}^{n-1}\\ 
1 & \alpha_{\varphi(\sigma)(2)} & \alpha_{\varphi(\sigma)(2)}^2 & \cdots & 
\alpha_{\varphi(\sigma)(2)}^{n-1}\\
\vdots & \vdots & \vdots & \ddots & \vdots\\
1 & \alpha_{\varphi(\sigma)(n)} & \alpha_{\varphi(\sigma)(n)}^2 & \cdots & 
\alpha_{\varphi(\sigma)(n)}^{n-1}\end{array}\right)^{-1}
\left(\begin{array}{c}\beta_{\psi(\tau)(1)}\\ \beta_{\psi(\tau)(2)}\\ \vdots \\ 
\beta_{\psi(\tau)(n)}\end{array}\right). 
\end{align*}
An element $h \in \Gab$ is identified as an element $h=(\sigma', \tau')\in\Gst$ through 
$(\alpha_i)^h = \alpha_{\varphi(\sigma')(i)}$ and $(\beta_i)^h = \beta_{\psi(\tau')(i)}$ 
for $i=1,\ldots, n$. 
Since $\varphi$ and $\psi$ are anti-isomorphisms, we have 
$\varphi(\sigma')\varphi(\sigma)=\varphi(\sigma\sigma')$ and 
$\psi(\tau')\psi(\tau)=\psi(\tau\tau')$. 
Hence the action of $h$ on $c_i^g, i=0,\ldots, n-1$, as an element of 
$\mathrm{Gal}(L_\ba\,L_\bb/M)$ is given by 
\begin{align*}
\left(\begin{array}{c}(c_0^g)^h\\ (c_1^g)^h\\ \vdots \\ (c_{n-1}^g)^h \end{array}\right)
=\left(
\begin{array}{ccccc}
1 & \alpha_{\varphi(\sigma \sigma')(1)} & \alpha_{\varphi(\sigma \sigma')(1)}^2 
& \cdots & \alpha_{\varphi(\sigma \sigma')(1)}^{n-1}\\ 
1 & \alpha_{\varphi(\sigma \sigma')(2)} & \alpha_{\varphi(\sigma \sigma')(2)}^2 
& \cdots & \alpha_{\varphi(\sigma \sigma')(2)}^{n-1}\\
\vdots & \vdots & \vdots & \ddots & \vdots\\
1 & \alpha_{\varphi(\sigma \sigma')(n)} & \alpha_{\varphi(\sigma \sigma')(n)}^2 
& \cdots & \alpha_{\varphi(\sigma \sigma')(n)}^{n-1}\end{array}\right)^{-1}
\left(\begin{array}{c}\beta_{\psi(\tau \tau')(1)}\\ \beta_{\psi(\tau \tau')(2)}\\ 
\vdots \\ \beta_{\psi(\tau \tau')(n)}\end{array}\right). 
\end{align*}
Therefore we see $(c_i^g)^h=c_i^{gh}, i=0,\ldots, n-1$. 
\end{proof}
\begin{proposition}\label{prop12}
Assume as above that $\Delta_\ba\cdot\Delta_\bb\neq 0$ for $\ba,\bb \in M^n$. 
Suppose that the polynomial $F_j(\ba,\bb;X)$ has no multiple root for some 
$j, 0\leq j \leq n-1$. Then the following two assertions hold\,{\rm :}\\
$(\mathrm{i})$\ $M(c_0^g,\ldots,c_{n-1}^g)=M(c_j^g)$ for each $g \in \Gst$\,{\rm ;}\\
$(\mathrm{ii})$ $L_\ba L_\bb=M(c_j^g\ |\ \overline{g}\in H\backslash \Gst)$. 
\end{proposition}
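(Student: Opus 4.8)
The plan is to deduce Proposition \ref{prop12} from the results already assembled in Section \ref{seSpe}, essentially by chaining Lemma \ref{lemnoMult}, Lemma \ref{lemD0}, and Proposition \ref{propLL}. The hypothesis is that $F_j(\ba,\bb;X)=\prod_{\overline{g}\in H\backslash\Gst}(X-c_j^g)$ has no multiple root, so in particular $c_j^g\neq c_j^h$ whenever $\overline{g}\neq\overline{h}$, which is exactly the nondegeneracy condition that makes the earlier lemmas applicable.

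For assertion $(\mathrm{i})$, first I would invoke Lemma \ref{lemnoMult}: since $F_j(\ba,\bb;X)$ has no multiple root and $\Delta_\ba\cdot\Delta_\bb\neq 0$, we get $D_{i,j}^0(\ba,\bb)\neq 0$ for every $i=0,\ldots,n-1$. Then Lemma \ref{lemD0} immediately yields $M(c_0^g,\ldots,c_{n-1}^g)=M(c_j^g)$ for each $g\in\Gst$, which is $(\mathrm{i})$. No further work is needed here; the content has been front-loaded into those two lemmas.

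For assertion $(\mathrm{ii})$, the inclusion $L_\ba L_\bb\supset M(c_j^g\mid\overline{g}\in H\backslash\Gst)$ was already observed in the text (it follows from specializing the inclusion $L_\bs L_\bt\supset M(c_j^g\mid\cdots)$, since each $c_j^g\in L_\ba L_\bb$). For the reverse inclusion, I would use $(\mathrm{i})$ to rewrite $M(c_j^g\mid\overline{g}\in H\backslash\Gst)=M(c_0^g,\ldots,c_{n-1}^g\mid\overline{g}\in H\backslash\Gst)$, and then apply Proposition \ref{propc}(ii), which gives $L_\ba L_\bb=L_\ba\,M(c_0^g,\ldots,c_{n-1}^g)=L_\bb\,M(c_0^g,\ldots,c_{n-1}^g)$ for each single $g$. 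The remaining point is to see that adjoining \emph{all} the Tschirnhausen-coefficient tuples, as $\overline{g}$ ranges over $H\backslash\Gst$, already produces $L_\ba$ (equivalently $L_\bb$) over $M$, and hence the full compositum $L_\ba L_\bb$. The cleanest route is the Galois-theoretic one used in the proof of Proposition \ref{propLL}: the stabilizer of $c_j^g$ in $\Gst$ (and hence, via the identification $\Gab\subset\Gst$, its stabilizer in $\Gab$) is $g^{-1}Hg\cap\Gab$, so the subfield of $L_\ba L_\bb$ fixed by $\Gab$ acting on all the $c_j^g$ is fixed by $\bigcap_{\overline{g}}g^{-1}Hg\cap\Gab=\{1\}\cap\Gab$, using that $\bigcap_{\overline{g}\in H\backslash\Gst}g^{-1}Hg=\{1\}$ (the center of $\frak{S}_n$ is trivial), together with the preceding lemma identifying the two actions of $\Gab$ on the set $\{c_i^g\}$. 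Therefore $M(c_j^g\mid\overline{g}\in H\backslash\Gst)$ is fixed only by the identity of $\Gab$, so it equals $L_\ba L_\bb$.

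The main obstacle, such as it is, is purely bookkeeping: one must make sure the descent from $\Gst$ to the subgroup $\Gab$ is handled correctly, i.e. that the stabilizer computation $\mathrm{Stab}_{\Gab}(c_j^g)=g^{-1}Hg\cap\Gab$ is legitimate. This requires precisely two inputs already established: that $\#\{c_j^g\mid\overline{g}\in H\backslash\Gst\}=n!$ (so that $\mathrm{Stab}_{\Gst}(c_j^g)=g^{-1}Hg$, which holds here because $F_j(\ba,\bb;X)$ has no multiple root), and the lemma just above the proposition stating that the action of $\Gab$ on $\{c_i^g\}$ as $\mathrm{Gal}(L_\ba L_\bb/M)$ agrees with its action as a subgroup of $\Gst$ on $H\backslash\Gst$. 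With those in hand, the intersection-of-conjugates argument goes through verbatim, and $(\mathrm{ii})$ follows. I expect the whole proof to be three or four lines citing Lemma \ref{lemnoMult}, Lemma \ref{lemD0}, Proposition \ref{propc}, and the structure of the proof of Proposition \ref{propLL}.
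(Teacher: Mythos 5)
Your proposal is correct and follows essentially the same route as the paper: assertion (i) via Lemma \ref{lemnoMult} combined with Lemma \ref{lemD0} (the paper's primary proof of (i)), and assertion (ii) via the stabilizer computation $\mathrm{Stab}_{\Gab}(c_j^g)=g^{-1}Hg\cap\Gab$ together with $\bigcap_{\overline{g}}g^{-1}Hg=\{1\}$ from the proof of Proposition \ref{propLL} and the lemma identifying the two actions of $\Gab$. The only difference is cosmetic: the paper additionally records a second, stabilizer-based proof of (i), which you effectively reproduce inside your argument for (ii).
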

\begin{proof}
The assertion (i) follows from Lemma \ref{lemD0} and Lemma \ref{lemnoMult}. 
Here we give an alternative proof of (i) and a proof of (ii) from the viewpoint of 
above discussion.

(i) If $F_j(\ba,\bb;X)$ has no multiple root, then 
$\#\{c_j^g\ |\ \overline{g}\in H\backslash \Gst\}=n!$. 
Therefore, we have $\mathrm{Stab}_{\Gab}(c_j^g)=g^{-1}Hg\,\cap\,\Gab$ 
for each $g \in \Gst$. 
It follows from Lemma \ref{stabil} that $g^{-1}Hg\,\cap\,\Gab$ is contained 
in $\mathrm{Stab}_{\Gab}(c_i^g)$ for every $i, 0\leq i\leq n-1$. Therefore we 
see $\mathrm{Stab}_{\Gab}(c_j^g) \subset \mathrm{Stab}_{\Gab}(c_i^g)$ for every 
$i, 0\leq i \leq n-1$, and $M(c_j^g) \supset M(c_i^g)$ for every $i, 0\leq i \leq n-1$. 
This shows (i). 

(ii) By the definition of $c_j^g$, we have 
$L_\ba L_\bb\supset M(c_j^g\ |\ \overline{g}\in H\backslash G_{\bs,\bt})\supset M
={L_\ba L_\bb}^{G_{\ba,\bb}}$. 
Let $h$ be an element of $\Gab$ and suppose that $h$ is trivial on 
$M(c_j^g\ |\ \overline{g}\in H\backslash \Gst)$. 
As we saw above, we have  $\mathrm{Stab}_{\Gab}(c_j^g)=g^{-1}Hg\,\cap\,\Gab$ 
for each $g \in \Gst$. 
Therefore $h$ belongs to $\bigcap_{\overline{g}\in H\backslash \Gst} g^{-1}Hg$ which 
is equal to $\{ 1 \}$ as we showed it in the proof of Proposition \ref{propLL}. 
Hence we have $h=1$ and then $\mathrm{Gal}(L_\ba L_\bb/M(c_j^g\ |\ \overline{g}\in 
H\backslash \Gst))=\{ 1 \}$. 
This shows (ii). 
\end{proof}
\begin{theorem}\label{throotf}
For a fixed $j, 0\leq j \leq n-1$, and $\ba,\bb \in M^n$ with 
$\Delta_\ba\cdot\Delta_\bb\neq 0$, assume that the polynomial $F_j(\ba,\bb;X)$ has no 
multiple root. Then $F_j(\ba,\bb;X)$ has a root in $M$ if and only if $M[X]/(f_n(\ba;X))$ 
and $M[X]/(f_n(\bb;X))$ are $M$-isomorphic. 
\end{theorem}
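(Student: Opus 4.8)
The plan is to recast the statement in terms of Tschirnhausen transformations and then prove the two implications separately, leaning on the structure theory already set up. \emph{Dictionary.} First I would record that, by the specialized identity (\ref{b=a's}), for each coset $\overline{g}=\overline{(\sigma,\tau)}\in H\backslash\Gst$ the vector $(c_0^g,\ldots,c_{n-1}^g)$ is the coefficient vector of the unique polynomial of degree $<n$ carrying $\alpha_{\varphi(\sigma)(i)}$ to $\beta_{\psi(\tau)(i)}$ for $i=1,\ldots,n$; equivalently, it is the Lagrange interpolation polynomial taking the roots of $f_n(\ba;X)$ to those of $f_n(\bb;X)$ according to the permutation $\pi_g:=\psi(\tau)\circ\varphi(\sigma)^{-1}\in\frak{S}_n$. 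Since the representatives $\{(1,\tau)\}$ exhaust $H\backslash\Gst$ and $\psi$ is surjective, $\overline{g}\mapsto\pi_g$ is a bijection $H\backslash\Gst\to\frak{S}_n$; hence $\{(c_0^g,\ldots,c_{n-1}^g)\mid\overline{g}\in H\backslash\Gst\}$ is exactly the set of coefficient vectors of all Tschirnhausen transformations from $f_n(\ba;X)$ to $f_n(\bb;X)$, the one indexed by $\overline{g}$ being defined over $M(c_0^g,\ldots,c_{n-1}^g)$.

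\emph{Root in $M$ $\Rightarrow$ $M$-isomorphism.} Suppose $c_j^g\in M$ for some $g$. Here I would apply Proposition \ref{prop12}(i) --- this is where the hypothesis that $F_j(\ba,\bb;X)$ has no multiple root enters --- to obtain $M(c_0^g,\ldots,c_{n-1}^g)=M(c_j^g)=M$, so the whole vector lies in $M$. Then
\[
f_n(\bb;X)=\prod_{i=1}^n\bigl(X-(c_0^g+c_1^g\alpha_i+\cdots+c_{n-1}^g\alpha_i^{n-1})\bigr)
\]
exhibits $f_n(\bb;X)$ as a Tschirnhausen transformation of $f_n(\ba;X)$ over $M$, and the ring homomorphism $\rho$ constructed in the proof of Lemma \ref{lemM}, taken with $M'=M$, is an $M$-algebra isomorphism between $M[X]/(f_n(\bb;X))$ and $M[X]/(f_n(\ba;X))$.

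\emph{$M$-isomorphism $\Rightarrow$ root in $M$.} Conversely, choose an $M$-isomorphism $\Theta\colon M[X]/(f_n(\bb;X))\to M[X]/(f_n(\ba;X))$ and write $\Theta(\overline{X})=\overline{p(X)}$ with $p\in M[X]$, $\deg p<n$; applying $\Theta$ to $\overline{f_n(\bb;X)}=0$ gives $f_n(\ba;X)\mid f_n(\bb;p(X))$, so $f_n(\bb;p(\alpha_i))=0$ for every root $\alpha_i$ of $f_n(\ba;X)$. Decomposing $M[X]/(f_n(\ba;X))$ and $M[X]/(f_n(\bb;X))$ into products of fields as in the proof of Lemma \ref{lemM} (valid because $\Delta_\ba\cdot\Delta_\bb\neq0$), and using that $\Theta$ matches those fields up and sends each generator to an element of the correct degree, I would conclude that $i\mapsto p(\alpha_i)$ is in fact a bijection onto $\{\beta_1,\ldots,\beta_n\}$, whence $f_n(\bb;X)=\prod_i(X-p(\alpha_i))$; thus $p$ is a Tschirnhausen transformation from $f_n(\ba;X)$ to $f_n(\bb;X)$ with coefficients in $M$. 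By the dictionary above, the coefficient vector of $p$ equals $(c_0^g,\ldots,c_{n-1}^g)$ for the coset $\overline{g}$ whose permutation is $\pi_g\colon\alpha_i\mapsto p(\alpha_i)=\beta_{\pi_g(i)}$; in particular $c_j^g\in M$ is a root of $F_j(\ba,\bb;X)$.

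\emph{Main obstacle.} The one genuinely load-bearing step --- and the sole use of the hypothesis --- is the passage $c_j^g\in M\Rightarrow(c_0^g,\ldots,c_{n-1}^g)\in M^n$: a single Tschirnhausen coefficient lying in $M$ need not determine the others, and it is precisely the no-multiple-root condition on $F_j(\ba,\bb;X)$ (equivalently $\#\{c_j^g\}=n!$, equivalently $D_{i,j}^0(\ba,\bb)\neq0$ for all $i$, cf. Lemmas \ref{lemD0} and \ref{lemnoMult}) that makes Proposition \ref{prop12}(i) available. The converse implication requires no hypothesis beyond separability; its only delicate point is checking, via the semisimple decomposition, that the interpolating polynomial $p$ permutes the two root sets rather than merely mapping one into the other, exactly as in the proof of Lemma \ref{lemM}.
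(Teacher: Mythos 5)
Your proposal is correct and follows essentially the same route as the paper's proof: the forward implication invokes Proposition \ref{prop12}(i) (via the no-multiple-root hypothesis) to promote $c_j^g\in M$ to $(c_0^g,\ldots,c_{n-1}^g)\in M^n$ and then reuses the isomorphism $\rho$ from Lemma \ref{lemM}, while the converse writes out the image of $\overline{X}$ under a given $M$-isomorphism and identifies its coefficient vector with some $(c_0^g,\ldots,c_{n-1}^g)$ by the uniqueness of the interpolating polynomial. Your explicit verification that $i\mapsto p(\alpha_i)$ permutes the root sets is a point the paper compresses into ``we find $\tau\in\Gt$,'' but the underlying argument is the same.
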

\begin{proof}
First suppose that one of the roots of $F_j(\ba,\bb;X)$, $c_j^g$ for some $g \in \Gst$, 
is in $M$. By the preceding proposition, we see $c_0^g, \ldots , c_{n-1}^g \in M$. 
Express $g=(\sigma, \tau)$ as usual. 
Then for the root $\alpha_{\varphi(\sigma)(i)}$ of $f_n(\ba;X)$, $\beta_{\psi(\tau)(i)} 
:= c_0^g + c_1^g\alpha_{\varphi(\sigma)(i)} + \cdots + 
c_{n-1}^g\alpha_{\varphi(\sigma)(i)}^{n-1}$ is a root of $f_n(\bb;X)$ in $M(\alpha)$ 
for $i=1,\ldots, n$. It is now easy to adopt the argument in the proof of Lemma \ref{lemM} 
and to obtain an isomorphism from $M[X]/(f_n(\bb;X))$ to $M[X]/(f_n(\ba;X))$ over $M$ by 
assigning $c_0^g + c_1^gX + \cdots + c_{n-1}^gX^{n-1}\ \mathrm{mod} (f_n(\ba;X))$ to 
$X\ \mathrm{mod} (f_n(\bb;X))$. 

Now suppose conversely that $M[X]/(f_n(\ba;X))$ 
and $M[X]/(f_n(\bb;X))$ are $M$-isomorphic. 
Then as in the proof of Lemma \ref{lemM}, the irreducible factors of $f_n(\ba;X)$ and those 
of $f_n(\bb;X)$ perfectly correspond. 
Moreover, each pair of corresponding simple components are isomorphic over $M$. 
Denote the image of $X\ \mathrm{mod} (f_n(\bb;X))$ by the isomorphism 
from $M[X]/(f_n(\bb;X))$ to $M[X]/(f_n(\ba;X))$ by 
$\eta := e_0 + e_1X + \cdots + e_{n-1}X^{n-1}\ \mathrm{mod} (f_n(\ba;X))$ 
with $e_0, \ldots , e_{n-1} \in M$. Then we find $\tau \in \Gt$ so that we 
have $\beta_{\psi(\tau)(i)} = e_0 + e_1\alpha_i + \cdots + e_{n-1}\alpha_i^{n-1}$ for 
$i=1,\ldots, n$. Hence for $g=(1, \tau) \in \Gst$, 
we must have $e_{\nu}=c_{\nu}^g$ for $\nu = 0,\ldots, n-1$. In particular, 
we see that $c_j^g$ is a root of $F_j(\ba,\bb;X)$ in $M$. 
\end{proof}
In the case where $G_\ba$ and $G_\bb$ are isomorphic to a transitive subgroup $G$ of 
$\frak{S}_n$ and every subgroups of $G$ with index $n$ are conjugate in $G$, 
the condition that $M[X]/(f_n(\ba;X))$ and $M[X]/(f_n(\bb;X))$ are isomorphic over $M$ is 
equivalent to the condition that $\mathrm{Spl}_M f_n(\ba;X)$ and 
$\mathrm{Spl}_M f_n(\bb;X)$ coincide. 
Hence we get an answer to the field isomorphism problem via $F_j(\ba,\bb;X)$. 
\begin{corollary}\label{cor1}
Let $j$ and $\ba,\bb \in M^n$ be as in Theorem \ref{throotf}. 
Assume that both of $G_\ba$ and $G_\bb$ are isomorphic to a transitive subgroup $G$ 
of $\frak{S}_n$ and that all subgroups of $G$ with index $n$ are conjugate in $G$. 
Then $F_j(\ba,\bb;X)$ has a root in $M$ if and only if 
$\mathrm{Spl}_M f_n(\ba;X)$ and $\mathrm{Spl}_M f_n(\bb;X)$ 
coincide. 
\end{corollary}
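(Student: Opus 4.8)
The plan is to derive this corollary from Theorem~\ref{throotf} by reducing the assertion ``$M[X]/(f_n(\ba;X))$ and $M[X]/(f_n(\bb;X))$ are $M$-isomorphic'' to the assertion ``$\mathrm{Spl}_M f_n(\ba;X)=\mathrm{Spl}_M f_n(\bb;X)$''. Since Theorem~\ref{throotf} already tells us that $F_j(\ba,\bb;X)$ has a root in $M$ iff the two quotient algebras are $M$-isomorphic, it suffices to prove that, under the hypothesis that $G_\ba\cong G_\bb\cong G$ with all index-$n$ subgroups of $G$ conjugate, the $M$-isomorphism of the quotient algebras is equivalent to the coincidence of the splitting fields. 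First I would observe that, since $G_\ba$ is transitive on the $n$ roots, $f_n(\ba;X)$ is irreducible over $M$, so $M[X]/(f_n(\ba;X))$ is a field, namely $M(\alpha_1)$ for a root $\alpha_1$; likewise $M[X]/(f_n(\bb;X))=M(\beta_1)$.

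Next I would translate the two conditions into subgroup data inside $G_{\ba,\bb}=\mathrm{Gal}(L_\ba L_\bb/M)$, viewed as a subgroup of $\Gst$ as in Section~3. The stabilizer of $\alpha_1$ in $G_\ba$ is an index-$n$ subgroup $G_\ba^{(1)}\le G_\ba$, and similarly $G_\bb^{(1)}\le G_\bb$. The field $M(\alpha_1)$ corresponds to $G_\ba^{(1)}$ and $M(\beta_1)$ to $G_\bb^{(1)}$ under the Galois correspondence for $L_\ba L_\bb/M$ (after pulling back along the projections $G_{\ba,\bb}\to G_\ba$, $G_{\ba,\bb}\to G_\bb$). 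Now $M(\alpha_1)\cong M(\beta_1)$ over $M$ iff these two conjugacy-class-of-subgroups coincide up to conjugacy in $G_{\ba,\bb}$; and $\mathrm{Spl}_M f_n(\ba;X)=\mathrm{Spl}_M f_n(\bb;X)$ iff the two \emph{normal cores} (equivalently, the kernels of $G_{\ba,\bb}\to G_\ba$ and $G_{\ba,\bb}\to G_\bb$) coincide, which, since $\mathrm{Spl}_M f_n(\ba;X)$ is the compositum of all $M(\alpha_i)$, is the same as saying the \emph{families} of conjugates of $G_\ba^{(1)}$ and of $G_\bb^{(1)}$ generate the same intersection. I would then invoke the hypothesis: if all index-$n$ subgroups of $G$ are conjugate, then knowing one member of the conjugacy class (up to conjugacy) determines the whole family, hence determines the normal core. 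This gives the equivalence.

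Concretely, the forward direction is the easy one: $\mathrm{Spl}_M f_n(\ba;X)=\mathrm{Spl}_M f_n(\bb;X)$ forces $L_\ba=L_\bb$, hence $M(\alpha_1)$ and $M(\beta_1)$ are both subfields of the same Galois extension $L_\ba/M$ with group $G$; since they have the same degree $n$ over $M$ and correspond to index-$n$ subgroups of $G\cong\mathrm{Gal}(L_\ba/M)$ which are all conjugate, they are conjugate subfields, hence $M$-isomorphic. For the converse, suppose $M(\alpha_1)\cong_M M(\beta_1)$. Inside $L_\ba L_\bb$ this isomorphism is realized by some $h\in G_{\ba,\bb}$ carrying $\alpha_1$ to a root $\beta_{i}$ of $f_n(\bb;X)$; thus $M(\beta_i)\subset L_\ba$, so $L_\bb=\mathrm{Spl}_M f_n(\bb;X)$, being generated by the conjugates of $\beta_i$, which are already conjugates over $M$ of an element of $L_\ba$, lies in $L_\ba$; by symmetry (using that $G_\bb$ also has all index-$n$ subgroups conjugate) $L_\ba\subseteq L_\bb$, so the splitting fields coincide. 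Combining with Theorem~\ref{throotf} gives the corollary.

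The step I expect to be the main obstacle is making the converse direction airtight without a gap: from a single $M$-embedding $M(\alpha_1)\hookrightarrow L_\bb$ one must conclude $L_\ba\subseteq L_\bb$, and this genuinely uses that \emph{every} root $\alpha_j$ of $f_n(\ba;X)$ lies in $L_\bb$, which in turn needs the conjugacy hypothesis to ensure that all the point-stabilizers are accounted for; a careless argument only places one conjugate of $\alpha_1$ in $L_\bb$. I would handle this by noting that $L_\ba/M$ is Galois with group $G$, the subfield $M(\alpha_1)$ corresponds to an index-$n$ subgroup $U\le G$, the other roots $\alpha_j$ correspond to the $G$-conjugates of $U$, and an $M$-embedding $M(\alpha_1)\hookrightarrow L_\bb$ together with the normality and the conjugacy of all index-$n$ subgroups forces the normal core $\bigcap_{\sigma}\sigma U\sigma^{-1}$ — which cuts out $L_\ba$ — to be contained in the corresponding subgroup for $L_\bb$, giving $L_\ba\subseteq L_\bb$; symmetry finishes it. Everything else is a routine unwinding of the Galois correspondence already set up in Sections~2 and~3.
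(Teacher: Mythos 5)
Your proposal is correct and follows essentially the route the paper intends: the paper derives Corollary~\ref{cor1} from Theorem~\ref{throotf} by the single remark that, under the transitivity and conjugacy hypotheses, $M$-isomorphism of the root fields is equivalent to coincidence of the splitting fields, and your argument supplies exactly the Galois-theoretic details of that equivalence. One small correction to your own diagnosis: the conjugacy-of-index-$n$-subgroups hypothesis is needed only for the direction ``splitting fields coincide $\Rightarrow$ root fields $M$-isomorphic''; the converse, which you single out as the main obstacle, follows already from normality of $L_\ba$ and $L_\bb$ over $M$ (an irreducible polynomial with one root in a normal extension splits there), and note also that an element $h\in\Gab$ extending the isomorphism sends $\alpha_1$ to some root $\alpha_j$ of $f_n(\ba;X)$ lying in $M(\beta_1)$, not to a root of $f_n(\bb;X)$.
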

We note that if $G$ is one of the symmetric group 
$\frak{S}_n$, ($n\neq 6$), the alternating group $\frak{A}_n$ of degree 
$n$, ($n\neq 6$), and solvable transitive subgroups of $\mathfrak{S}_p$ 
of prime degree $p$, then all subgroups of $G$ with index $n$ or $p$, respectively, 
are conjugate in $G$ (cf. \cite{Hup67}, \cite{BJY86}). 

Let $H_1$ and $H_2$ be subgroups of $\mathfrak{S}_n$. 
As an analogue to Theorem \ref{th-gen}, we obtain a $k$-generic polynomial 
for $H_1\times H_2$, the direct product of groups $H_1$ and $H_2$. 
\begin{theorem}\label{thgen}
Let $M=k(q_1,\ldots,q_l,r_1,\ldots,r_m)$, $(1\leq l,\, m\leq n-1)$ be the rational function 
field over $k$ with $(l+m)$ variables. 
For $\ba\in {k(q_1,\ldots,q_l)}^n, \bb\in {k(r_1,\ldots,r_m)}^n$, we assume that 
$f_n(\ba;X)\in M[X]$ and $f_n(\bb;X)\in M[X]$ be 
$k$-generic polynomials for $H_1$ and $H_2$ respectively. 
For a fixed $j, 0\leq j \leq n-1$, assume that $F_j(\ba,\bb;X)\in M[X]$ 
has no multiple root. 
Then $F_j(\ba,\bb;X)$ is a $k$-generic polynomial for $H_1\times H_2$ which is not 
necessary irreducible. 
\end{theorem}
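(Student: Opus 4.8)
The plan is to verify the two defining properties of a $k$-generic polynomial for $H_1\times H_2$: namely, that the Galois group of $F_j(\ba,\bb;X)$ over $M$ is (a quotient realizing) $H_1\times H_2$ in the appropriate sense, and that every $(H_1\times H_2)$-Galois extension (indeed every Galois extension with group a subgroup of $H_1\times H_2$, via Kemper's theorem) over an infinite field $N\supset k$ arises by specialization of the parameters $q_1,\ldots,q_l,r_1,\ldots,r_m$. First I would compute the splitting field of $F_j(\ba,\bb;X)$ over $M$. Since $F_j(\ba,\bb;X)$ has no multiple root by hypothesis, Proposition \ref{prop12}(ii) applies and gives $\mathrm{Spl}_M F_j(\ba,\bb;X)=M(c_j^g\mid \overline{g}\in H\backslash\Gst)=L_\ba L_\bb$, where $L_\ba=\mathrm{Spl}_M f_n(\ba;X)$ and $L_\bb=\mathrm{Spl}_M f_n(\bb;X)$. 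Because $\ba$ has entries in $k(q_1,\ldots,q_l)$ and $\bb$ has entries in $k(r_1,\ldots,r_m)$, and $M$ is the rational function field in the disjoint variable sets, the extensions $L_\ba$ and $L_\bb$ are linearly disjoint over $M$; hence $\mathrm{Gal}(L_\ba L_\bb/M)\cong \mathrm{Gal}(L_\ba/M)\times\mathrm{Gal}(L_\bb/M)\cong H_1\times H_2$, using the genericity (in particular, the correct Galois group) of $f_n(\ba;X)$ and $f_n(\bb;X)$.

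Next I would establish the specialization property. Let $N\supset k$ be an infinite field and let $E/N$ be a Galois extension with $\mathrm{Gal}(E/N)$ a subgroup of $H_1\times H_2$; by Galois theory this subgroup projects onto subgroups $J_1\le H_1$ and $J_2\le H_2$, and $E$ is the compositum of its fixed fields $E_1$ (an $J_1$- or more precisely a subextension with group a subquotient of $H_1$) and $E_2$. More directly: any $(H_1\times H_2)$-extension $E/N$ decomposes as $E=E_1E_2$ with $E_1/N$ an $H_1$-extension, $E_2/N$ an $H_2$-extension, and $E_1\cap E_2=N$. By the $k$-genericity of $f_n(\ba;X)$ for $H_1$ there is a specialization $\mathbf{q}\mapsto \mathbf{q}_0\in N^l$ with $E_1=\mathrm{Spl}_N f_n(\ba(\mathbf{q}_0);X)$, and similarly $\mathbf{r}\mapsto\mathbf{r}_0\in N^m$ with $E_2=\mathrm{Spl}_N f_n(\bb(\mathbf{r}_0);X)$; one must choose these so that $\Delta_{\ba(\mathbf{q}_0)}\cdot\Delta_{\bb(\mathbf{r}_0)}\ne 0$, which is automatic since the splitting fields are separable of the correct degree. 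Then $\mathrm{Spl}_N F_j(\ba(\mathbf{q}_0),\bb(\mathbf{r}_0);X)$ contains $E_1E_2=E$ provided $F_j$ specialized at this point still has no multiple root; more carefully, I would invoke the identity $\mathrm{Spl}_N F_j(\ba(\mathbf{q}_0),\bb(\mathbf{r}_0);X)=\mathrm{Spl}_N f_n(\ba(\mathbf{q}_0);X)\cdot\mathrm{Spl}_N f_n(\bb(\mathbf{r}_0);X)$ from Proposition \ref{prop12}(ii) applied over $N$, so the splitting field is exactly $E$ and $E$ is obtained as $\mathrm{Spl}_N F_j$ at a specialization of the parameters.

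The main obstacle I anticipate is the multiple-root issue at the specialized point: Proposition \ref{prop12}(ii) requires $F_j(\ba(\mathbf{q}_0),\bb(\mathbf{r}_0);X)$ to have no multiple root over $N$, and the hypothesis of the theorem only guarantees this generically over $M$. To handle this I would argue as follows: the discriminant of $F_j(\ba,\bb;X)$ as a polynomial in $X$ is a nonzero element of $M=k(\mathbf{q},\mathbf{r})$ (nonzero because $F_j$ has no multiple root over $M$), so it is a nonzero rational function in the parameters. Since $N$ is infinite and $E_1/N$, $E_2/N$ can be realized by specializations lying outside the proper closed subset where this discriminant vanishes or is undefined — using that the set of valid specializations realizing a given generic extension is Zariski-dense, a standard consequence of genericity and Hilbert-irreducibility-type density, or by directly perturbing within the parameter space while fixing the isomorphism classes of $E_1,E_2$ — we may assume the discriminant does not vanish at $(\mathbf{q}_0,\mathbf{r}_0)$. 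Alternatively, and perhaps more cleanly, one observes that $F_j(\ba(\mathbf{q}_0),\bb(\mathbf{r}_0);X)$ always has degree $n!$ with roots the $c_j^g$, and the no-multiple-root condition holds precisely when the $n!$ values $c_j^{(1,\tau)}$, $\tau\in\Gt$, are distinct; by the Proposition preceding Proposition \ref{prop12} this is equivalent to $D_{i,j}^0(\ba(\mathbf{q}_0),\bb(\mathbf{r}_0))\ne 0$ for all $i$, again a nonvanishing-of-a-fixed-nonzero-polynomial condition that can be arranged. Once this is secured, the two generic-polynomial axioms are both verified and the proof concludes; I would also remark explicitly, as the statement does, that $F_j$ need not be irreducible — its factorization corresponds to the orbit decomposition of $H\backslash\Gst$ under $\Gab\cong H_1\times H_2$, which does not affect genericity since only the splitting field matters.
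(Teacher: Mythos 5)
Your proof is correct and follows essentially the same route as the paper's, which is a two-line appeal to Proposition \ref{prop12} (to identify $\mathrm{Spl}_M F_j(\ba,\bb;X)$ with $L_\ba L_\bb$) followed by the genericity of the two cubic... rather, degree-$n$ factors. You merely supply the details the paper leaves implicit --- the linear disjointness of $L_\ba$ and $L_\bb$ over $M$ giving the group $H_1\times H_2$, and the choice of specializations off the vanishing locus of $\mathrm{Disc}_X F_j$ so that Proposition \ref{prop12} still applies after specialization --- both of which are legitimate and standard.
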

\begin{proof}
It follows from Proposition \ref{prop12} that 
$M(c_j^g\ |\ \overline{g}\in H\backslash G_{\bs,\bt})=L_\ba L_\bb$. 
Hence the assertion follows from the $H_1$-genericness of $f_n(\ba;X)$ and 
the $H_2$-genericness of $f_n(\bb;X)$. 
\end{proof}

In each Tschirnhausen equivalence class, we can always choose a 
specialization $\bs\mapsto \ba\in M^n$ of the polynomial $f_n(\bs;X)$ 
which satisfy $a_1=0$ and $a_{n-1}=a_{n}$ (see \cite[\S 8.2]{JLY02}). 
Hence the polynomial 
\begin{align*}
&g_n(q_2,\ldots,q_{n-1};X)\\
&:=(-1)^n\cdot f_n(0,q_2,\ldots,q_{n-2},q_{n-1},q_{n-1};-X)\\
&=X^n+q_2X^{n-2}+\cdots+q_{n-2}X^2+q_{n-1}X+q_{n-1}
\end{align*}
is $k$-generic for $\frak{S}_n$ with $(n-2)$ parameters 
$q_2,\ldots,q_{n-1}$ over an arbitrary field $k$. 

Indeed if the characteristic of the field $k$ is prime to $n$, 
we obtain $q_2,\ldots,q_{n-1}$ in terms of $s_1,\ldots,s_n$ as follows: 
we put $\mathbf{X}:=(X_1,\ldots,X_n)$, 
\[
X_1:=x_1-s_1/n,\ X_2:=x_2-s_1/n,\ \ldots,\ X_n:=x_n-s_1/n: 
\]
then we have $k(\mathbf{X}):=k(X_1,\ldots,X_{n-1})\subset k(\mathbf{x})$ and 
\[
X_1+X_2+\cdots+X_n=0.
\]
The $\mathfrak{S}_n$-action on $k(\mathbf{x})$ induces an action on 
$k(\mathbf{X})$ which is linear and faithful. We also have 
$k(\mathbf{X})^{\frak{S}_n}=k(\mathbf{S}):=k(S_1,S_2,\ldots,S_n)$ where 
$S_i$ is the $i$-th elementary symmetric functions in $X_1,\ldots,X_n$. 
Note $S_1=0$. The polynomials $f_n(\mathbf{S};X)$ and $f_n(\bs;X)$ are 
Tschirnhausen equivalent over $k(\mathbf{s})$, and $f_n(\mathbf{S};X)$ 
generates the field extension $k(\mathbf{X})/k(\mathbf{X})^{\frak{S}_n}$. 
By Kemper-Mattig's theorem \cite{KM00}, $f_n(\mathbf{S};X)$ 
is $k$-generic for $\frak{S}_n$ with parameters $S_2,\ldots,S_n$. 
Define 
\[
q_1:=S_n/S_{n-1},\quad q_i:=S_i/q_1^i,\ (i=2,\ldots,n-1). 
\]
Then we obtain $k(\mathbf{S})=k(q_1,\ldots,q_{n-1})$ and 
\begin{align*}
g_n(q_2,\ldots,q_{n-1};X)=(-1/q_1)^n f_n(\mathbf{S};-q_1X). 
\end{align*}
The polynomials $g_n(q_2,\ldots,q_{n-1};X)$ and $f_n(\mathbf{S};X)$ are 
Tschirnhausen equivalent over $k(\mathbf{S})$. 
Since $\mathrm{deg}(q_1)=1, \mathrm{deg}(q_i)=0, (i=2,\ldots,n-1)$, 
we also see that $g_n(q_2,\ldots,q_{n-1};X)$ is a generating polynomial 
of the degree-zero field 
$k(\mathbf{X})_0:=k(X_1/X_2,\ldots,X_{n-1}/X_n)\subset k(\mathbf{X})$ 
over $k(\mathbf{X})_0^{\frak{S}_n}=k(q_2,\ldots,q_{n-1})$, 
(cf. \cite{Kem96}, \cite[Theorem7]{KM00}). 

\begin{corollary}\label{cors2}
Let $M=k(q_2,\ldots,q_{n-1},r_2,\ldots,r_{n-1})$ be the rational function 
field with $2(n-2)$ variables. 
Let $\ba=(0,q_2,\ldots,q_{n-1},q_{n-1})\in M^n$ and 
$\bb=(0,r_2,\ldots,r_{n-1},r_{n-1})\in M^n$. 
For a fixed $j, 0\leq j \leq n-1$, assume that $F_j(\ba,\bb;X)\in M[X]$ 
has no multiple root. 
Then $F_j(\ba,\bb;X)$ is a $k$-generic polynomial for $\frak{S}_n\times \frak{S}_n$ 
with $2(n-2)$ parameters $q_2,\ldots,q_{n-1}, r_2,\ldots,r_{n-1}$. 
\end{corollary}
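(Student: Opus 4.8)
The plan is to derive Corollary~\ref{cors2} directly from Theorem~\ref{thgen} by checking that the hypotheses of that theorem are met for the particular specialization $\ba=(0,q_2,\ldots,q_{n-1},q_{n-1})$ and $\bb=(0,r_2,\ldots,r_{n-1},r_{n-1})$. Thus we take $H_1=H_2=\frak{S}_n$, so that $H_1\times H_2=\frak{S}_n\times\frak{S}_n$, and we set $l=m=n-2$ with $q_1,\ldots,q_l$ renamed $q_2,\ldots,q_{n-1}$ and $r_1,\ldots,r_m$ renamed $r_2,\ldots,r_{n-1}$; then $M=k(q_2,\ldots,q_{n-1},r_2,\ldots,r_{n-1})$ is exactly the rational function field with $2(n-2)$ variables required.

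First I would recall that, by the discussion immediately preceding the corollary, the polynomial $f_n(\ba;X)=(-1)^n g_n(q_2,\ldots,q_{n-1};-X)$ with $\ba=(0,q_2,\ldots,q_{n-1},q_{n-1})$ is precisely $g_n(q_2,\ldots,q_{n-1};X)$ up to the substitution $X\mapsto -X$ and a sign; and $g_n$ was shown there to be $k$-generic for $\frak{S}_n$ over $k(q_2,\ldots,q_{n-1})$ (via Kemper--Mattig's theorem and the passage to the degree-zero field). The substitution $X\mapsto -X$ and multiplication by $(-1)^n$ do not change the splitting field nor the Galois group, so $f_n(\ba;X)\in M[X]$ is $k$-generic for $\frak{S}_n$; it lies in $k(q_2,\ldots,q_{n-1})[X]\subset M[X]$. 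By the symmetric argument with the independent variables $r_2,\ldots,r_{n-1}$ in place of $q_2,\ldots,q_{n-1}$, the polynomial $f_n(\bb;X)\in k(r_2,\ldots,r_{n-1})[X]\subset M[X]$ is $k$-generic for $\frak{S}_n$ as well. Hence the two genericity hypotheses of Theorem~\ref{thgen} hold with $H_1=H_2=\frak{S}_n$.

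Next, the remaining hypothesis of Theorem~\ref{thgen} is that, for the chosen fixed $j$ with $0\le j\le n-1$, the polynomial $F_j(\ba,\bb;X)\in M[X]$ has no multiple root; but this is assumed outright in the statement of the corollary, so there is nothing to prove. Applying Theorem~\ref{thgen} then yields that $F_j(\ba,\bb;X)$ is a $k$-generic polynomial for $H_1\times H_2=\frak{S}_n\times\frak{S}_n$, not necessarily irreducible. Finally I would count parameters: the coefficients of $F_j(\ba,\bb;X)$ are rational functions of $\ba,\bb$, hence of $q_2,\ldots,q_{n-1},r_2,\ldots,r_{n-1}$, which are $2(n-2)$ algebraically independent variables over $k$; and since no proper subset of them can cut out $M$, the polynomial genuinely involves $2(n-2)$ parameters. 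This gives exactly the assertion of the corollary.

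The proof is thus essentially a bookkeeping exercise in matching notation, and the only point requiring a little care is the identification of $f_n(\ba;X)$ with the generic polynomial $g_n$ established in the text: one must make sure that the normalization $a_1=0$, $a_{n-1}=a_n$ is consistent with the defining relation $f_n(\bs;X)=X^n-s_1X^{n-1}+s_2X^{n-2}+\cdots+(-1)^ns_n$, so that $\ba=(0,q_2,\ldots,q_{n-1},q_{n-1})$ really does reproduce $g_n(q_2,\ldots,q_{n-1};X)$ after the sign change $X\mapsto -X$. I expect no genuine obstacle; the substance of the corollary was already delivered by Theorem~\ref{thgen} together with the preamble establishing $k$-genericity of $g_n$, and the corollary merely packages these with a concrete choice of generic polynomial for $\frak{S}_n$.
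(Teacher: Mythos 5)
Your proposal is correct and follows exactly the route the paper intends: the corollary is stated without a separate proof precisely because it is the specialization of Theorem~\ref{thgen} to $H_1=H_2=\frak{S}_n$, $l=m=n-2$, combined with the preceding paragraph's verification (via Kemper--Mattig and the degree-zero field) that $g_n(q_2,\ldots,q_{n-1};X)$, i.e.\ $f_n(\ba;X)$ up to the sign change $X\mapsto -X$, is $k$-generic for $\frak{S}_n$. Your bookkeeping of the normalization $a_1=0$, $a_{n-1}=a_n$ and the parameter count is the only content needed, and you handle it correctly.
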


In order to obtain an answer to the subfield problem of generic polynomials, we study the 
degrees of the fields of Tschirnhausen coefficients $M(c_0^g,\ldots,c_{n-1}^g)$ over $M$ 
for $g\in G_{\bs,\bt}$. 
The factorization pattern of the polynomial $F_i(\ba,\bb;X)$ over $M$ gives us information 
about the degeneration of Galois groups under the specialization $(\bs, \bt) \mapsto (\ba, \bb)$ 
and about the intersection of root fields of $f_n(\ba;X)$ and $f_n(\bb;X)$ over $M$ through 
the degrees of $M(c_0^g,\ldots,c_{n-1}^g)$ over $M$ as in Proposition \ref{propc}. 

\begin{proposition}\label{propsplit}
Assume that the characteristic of $M$ is not equal to $2$ $($resp. is equal to $2)$. 
If $\Delta_\ba/\Delta_\bb\in M$ $($resp. $\beta_\ba+\beta_\bb\in M)$, then the polynomial 
$F_i(\ba,\bb;X)$ splits into two factors of degree $n!/2$ over $M$ which are not necessary 
irreducible. 
\end{proposition}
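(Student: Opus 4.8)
The plan is to mimic the structure of Proposition \ref{prop-decom}, which established the analogous splitting over the generic field $K$, and then transfer it along the specialization $(\bs,\bt)\mapsto(\ba,\bb)$. First I would recall that the polynomial $F_i(\bs,\bt;X)=\prod_{\overline{g}\in H\backslash\Gst}(X-u_i^g)$ factors over $K$ as $F_i^+(X)F_i^-(X)$, each factor of degree $n!/2$ lying in the quadratic subextension $K(\Dels/\Delt)$ (resp. $K(\betas+\betat)$) of $L_\bs L_\bt/K$; this is exactly the content of Proposition \ref{prop-decom} together with the definition \eqref{defFpm} of $F_i^\pm$. Specializing $(\bs,\bt)\mapsto(\ba,\bb)$ sends $u_i^g$ to $c_i^g$, hence sends $F_i^{\pm}(X)$ to well-defined polynomials $F_i^{\pm}(\ba,\bb;X)\in M(\Dela/\Delb)[X]$ (resp. $M(\betaa+\betab)[X]$), each of degree $n!/2$, whose product is $F_i(\ba,\bb;X)$.

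Next I would argue that under the hypothesis the relevant quadratic quantity actually lies in $M$. If $\mathrm{char}\,M\neq 2$ and $\Dela/\Delb\in M$, then the coefficients of $F_i^{\pm}(\ba,\bb;X)$, which a priori lie in $M(\Dela/\Delb)$, in fact lie in $M$, so $F_i(\ba,\bb;X)=F_i^+(\ba,\bb;X)\,F_i^-(\ba,\bb;X)$ is a factorization into two polynomials of degree $n!/2$ over $M$. The characteristic $2$ case is handled identically with $\Dels/\Delt$ replaced throughout by the Berlekamp discriminant $\betas+\betat$, using that $M(\betaa+\betab)=M$ under the hypothesis $\betaa+\betab\in M$, and that Proposition \ref{prop-decom} already supplies the corresponding factorization $F_i^+(X)F_i^-(X)$ over $K(\betas+\betat)$.

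The one point that needs a little care — and which I expect to be the main obstacle — is justifying that the generic factorization really does specialize: a priori a specialization of parameters can collapse coefficients or introduce cancellation, so I must check that the coefficients of $F_i^{\pm}(\bs,\bt;X)$, viewed as elements of $k[\bs,\bt][\Dels/\Delt]$ (equivalently of $k[\bs,\bt]+k[\bs,\bt]\cdot(\Dels/\Delt)$ after clearing denominators appropriately, noting $\Dels/\Delt$ is a quotient of polynomials in $\bs,\bt$ that is nonzero and regular wherever $\Delta_\bb\neq 0$), are defined at $(\ba,\bb)$. This is exactly where the standing assumption $\Dela\cdot\Delb\neq 0$ is used: it guarantees $\Dela/\Delb$ is a well-defined nonzero element of $M$, so the homomorphism $k[\bs,\bt,\Delt^{-1}]\to M$ is defined and carries the factored identity $F_i=F_i^+F_i^-$ to the desired identity over $M$. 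Once this is in place the statement follows; one should also remark, as the proposition does, that the two degree-$n!/2$ factors need not be irreducible, since irreducibility of $F_i^{\pm}(\bs,\bt;X)$ over $K(\Dels/\Delt)$ (which would come from $\mathrm{Stab}_{\Gst}(u_i^g)=g^{-1}Hg$ being contained with index $2$) is not preserved under specialization.
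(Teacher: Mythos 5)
Your proposal is correct and follows essentially the same route as the paper, whose entire proof is the one-line remark that the assertion follows from Proposition \ref{prop-decom}; you have simply made explicit the specialization step (that $\Dela\cdot\Delb\neq 0$ ensures the coefficients of $F_i^{\pm}$ are defined at $(\ba,\bb)$ and that the hypothesis $\Dela/\Delb\in M$, resp. $\betaa+\betab\in M$, puts them in $M$), which the paper leaves implicit.
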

\begin{proof}
The assertion follows from Proposition \ref{prop-decom}. 
\end{proof}
\begin{corollary}
If $G_\ba,G_\bb\subset \frak{A}_n$, then $F_i(\ba,\bb;X)$ splits into two 
factors of degree $n!/2$ over $M$ which are not necessary irreducible. 
\end{corollary}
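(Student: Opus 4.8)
The plan is to deduce this corollary from Proposition~\ref{propsplit} by checking that its hypothesis is automatically satisfied under the stronger assumption $G_\ba, G_\bb \subset \frak{A}_n$. First I would recall the classical description of the discriminant: if $\bm{\alpha}=(\alpha_1,\ldots,\alpha_n)$ are the roots of $f_n(\ba;X)$ then $\Delta_\ba = \prod_{i<j}(\alpha_j-\alpha_i)$ satisfies $\Delta_\ba^2 = \mathrm{disc}(f_n(\ba;X)) \in M$, and an element $h=(\sigma,\tau)\in G_{\ba,\bb} \subset \Gst$ acts on $\Delta_\ba$ by the signature $\varepsilon(\varphi(\sigma))$ of the permutation $\varphi(\sigma)$, and similarly on $\Delta_\bb$ by $\varepsilon(\psi(\tau))$. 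Thus $\Delta_\ba \in M = (L_\ba L_\bb)^{G_{\ba,\bb}}$ precisely when $\varepsilon(\varphi(\sigma))=1$ for every $(\sigma,\tau)\in G_{\ba,\bb}$, which is exactly the condition $G_\ba \subset \frak{A}_n$; likewise $\Delta_\bb \in M$ is equivalent to $G_\bb \subset \frak{A}_n$.

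Next I would treat the two characteristic cases separately, as in Proposition~\ref{propsplit}. When $\operatorname{char} M \neq 2$: from $G_\ba, G_\bb \subset \frak{A}_n$ we get $\Delta_\ba \in M$ and $\Delta_\bb \in M$, hence (since $\Delta_\bb \neq 0$ by the running assumption $\Delta_\ba\cdot\Delta_\bb\neq 0$) the quotient $\Delta_\ba/\Delta_\bb \in M$, and Proposition~\ref{propsplit} applies directly to give the splitting of $F_i(\ba,\bb;X)$ into two factors of degree $n!/2$. When $\operatorname{char} M = 2$: here one uses the Berlekamp discriminant $\beta_\ba = \sum_{i<j}\alpha_i/(\alpha_i+\alpha_j)$ in place of $\Delta_\ba$; the relevant fact (Berlekamp~\cite{Ber76}, already invoked in Section~2) is that $\beta_\ba + \beta_\ba^2 \in M$ and that an element $h=(\sigma,\tau)$ acts on $\beta_\ba$ by adding $0$ if $\varphi(\sigma)\in\frak{A}_n$ and $1$ otherwise. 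Consequently $\beta_\ba \in M$ iff $G_\ba \subset \frak{A}_n$, and similarly for $\beta_\bb$; adding, $\beta_\ba + \beta_\bb \in M$, and Proposition~\ref{propsplit} again gives the desired factorization.

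The only subtlety — and the step I would be most careful about — is the assertion that $G_{\ba,\bb}$, viewed as a subgroup of $\Gst$ via the fixed orderings of the roots, projects onto $G_\ba$ in the first factor and onto $G_\bb$ in the second, so that the condition ``$\varepsilon(\varphi(\sigma))=1$ for all $(\sigma,\tau)\in G_{\ba,\bb}$'' really is equivalent to $G_\ba \subset \frak{A}_n$ rather than to something weaker. This is immediate from the definitions in Section~3: restriction of automorphisms gives surjections $G_{\ba,\bb} \twoheadrightarrow G_\ba$ and $G_{\ba,\bb}\twoheadrightarrow G_\bb$, and under the identification these are precisely the first and second coordinate projections. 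Once this is in hand the corollary is just a matter of assembling the two characteristic cases; no genuine computation is required beyond the standard sign/Berlekamp calculus.

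\begin{proof}
Suppose first that $\operatorname{char} M \neq 2$. An element $h=(\sigma,\tau)\in G_{\ba,\bb}\subset\Gst$ acts on $\Delta_\ba=\prod_{i<j}(\alpha_j-\alpha_i)$ by the signature $\varepsilon(\varphi(\sigma))$ of $\varphi(\sigma)\in\frak{S}_n$, and on $\Delta_\bb$ by $\varepsilon(\psi(\tau))$. Since the restriction maps $G_{\ba,\bb}\to G_\ba$ and $G_{\ba,\bb}\to G_\bb$ coincide with the coordinate projections, the hypothesis $G_\ba,G_\bb\subset\frak{A}_n$ gives $\varepsilon(\varphi(\sigma))=\varepsilon(\psi(\tau))=1$ for all $h\in G_{\ba,\bb}$, so $\Delta_\ba,\Delta_\bb$ are fixed by $G_{\ba,\bb}=\mathrm{Gal}(L_\ba L_\bb/M)$, i.e. $\Delta_\ba,\Delta_\bb\in M$. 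As $\Delta_\bb\neq 0$, we get $\Delta_\ba/\Delta_\bb\in M$, and Proposition~\ref{propsplit} yields the splitting of $F_i(\ba,\bb;X)$ into two factors of degree $n!/2$ over $M$.

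Now suppose $\operatorname{char} M = 2$. By Berlekamp \cite{Ber76}, $\betaa+\betaa^2\in M$ and $h=(\sigma,\tau)\in G_{\ba,\bb}$ acts on $\betaa$ by adding $0$ or $1$ according as $\varphi(\sigma)\in\frak{A}_n$ or not; the analogous statement holds for $\betat$ replaced by $\beta_\bb$. Hence $G_\ba\subset\frak{A}_n$ forces $\betaa\in M$, and $G_\bb\subset\frak{A}_n$ forces $\beta_\bb\in M$, so $\betaa+\beta_\bb\in M$, and again Proposition~\ref{propsplit} gives the desired factorization.
\end{proof}
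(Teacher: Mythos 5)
Your proposal is correct and follows exactly the route the paper intends: the corollary is stated immediately after Proposition~\ref{propsplit} with no separate proof, the point being that $G_\ba,G_\bb\subset\frak{A}_n$ forces $\Delta_\ba,\Delta_\bb\in M$ (resp.\ $\beta_\ba,\beta_\bb\in M$ in characteristic $2$), so the hypothesis $\Delta_\ba/\Delta_\bb\in M$ (resp.\ $\beta_\ba+\beta_\bb\in M$) of that proposition holds. Your extra care about the coordinate projections $G_{\ba,\bb}\to G_\ba$, $G_{\ba,\bb}\to G_\bb$ and the sign/Berlekamp calculus is sound and fills in details the paper leaves implicit.
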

%

\section{Cubic case}\label{seCubic}

In this section, we treat the cubic case of the subfield problem of generic 
polynomial via general Tschirnhausen transformations. 
We take 
\[
f_3(\bs;X):=X^3-s_1X^2+s_2X-s_3\in k(\bs)[X] 
\]
where
\begin{align*}
s_1&=x_1+x_2+x_3,\\
s_2&=x_1x_2+x_1x_3+x_2x_3,\\
s_3&=x_1x_2x_3.
\end{align*}
As in the previous section, we take the discriminant $\Dels$ in general and 
especially the Berlekamp discriminant $\betas$ if char $k=2$: 
\begin{align}
\Dels&:=(x_2-x_1)(x_3-x_1)(x_3-x_2),\nonumber\\
\betas&:=\frac{x_1}{x_1+x_2}+\frac{x_1}{x_1+x_3}
+\frac{x_2}{x_2+x_3}\label{Berle}\\
&\,\, =\frac{x_1^2x_2+x_2^2x_3+x_1x_3^2+x_1x_2x_3}
{x_1^2x_2+x_2^2x_3+x_3^2x_1+x_1x_2^2+x_2x_3^2+x_3x_1^2}.\nonumber
\end{align}
Then we have 
\begin{align*}
\Dels^2&=s_1^2s_2^2-4s_2^3-4s_1^3s_3+18s_1s_2s_3-27s_3^2,\\
\betas(\betas+1)&=
\frac{s_2^3+s_1^3s_3+s_1s_2s_3+s_3^2}{s_1^2s_2^2+s_3^2}. 
\end{align*}
The field $k(\bs)(\Dels)$ (resp. $k(\bs)(\betas)$) is a quadratic 
extension of $k(\bs)$ if char $k\neq 2$ (resp. char $k=2$). 
For $\overline{g}=\overline{(1,\tau)}\in H\backslash \Gst$, we put 
\begin{align*}
\left(
\begin{array}{c}
u_0^g\\ u_1^g\\ u_2^g
\end{array}\right):=
\left(
\begin{array}{ccc}
1 & x_1 & x_1^2 \\ 
1 & x_2 & x_2^2 \\
1 & x_3 & x_3^2 \end{array}\right)^{-1}
\left(
\begin{array}{c}
y_1^\tau\\ y_2^\tau\\ y_3^\tau\end{array}\right). 
\end{align*}
By the definition we evaluate $(u_0,u_1,u_2)$ as 
\begin{align*}
u_0\ &=\ \Dels^{-1}\cdot\mathrm{det}
\left(\begin{array}{ccc}
y_1 & x_1 & x_1^2\\ 
y_2 & x_2 & x_2^2\\
y_3 & x_3 & x_3^2
\end{array}\right)\\
\ &=\ \frac{x_2x_3y_1}{(x_2-x_1)(x_3-x_1)}
-\frac{x_1x_3y_2}{(x_2-x_1)(x_3-x_2)}
+\frac{x_1x_2y_3}{(x_3-x_1)(x_3-x_2)},\\
u_1\ &=\ \Dels^{-1}\cdot\mathrm{det}
\left(\begin{array}{ccc}
1 & y_1 & x_1^2\\ 
1 & y_2 & x_2^2\\
1 & y_3 & x_3^2
\end{array}\right)\\
\ &=\ -\frac{(x_2+x_3)y_1}{(x_2-x_1)(x_3-x_1)}
+\frac{(x_1+x_3)y_2}{(x_2-x_1)(x_3-x_2)}
-\frac{(x_1+x_2)y_3}{(x_3-x_1)(x_3-x_2)},\\
u_2\ &=\ \Dels^{-1}\cdot\mathrm{det}
\left(\begin{array}{ccc}
1 & x_1 & y_1\\ 
1 & x_2 & y_2\\
1 & x_3 & y_3
\end{array}\right)\\ 
\ &=\ \frac{y_1}{(x_2-x_1)(x_3-x_1)}
-\frac{y_2}{(x_2-x_1)(x_3-x_2)}
+\frac{y_3}{(x_3-x_1)(x_3-x_2)}.
\end{align*}
A general form of Tschirnhausen transformations of $f_3(\bs;X)$ is 
given by 
\begin{align*}
&g_3(\bs,u_0,u_1,u_2;X)\\
&:=\ \mathrm{Resultant}_Y 
\bigl(f_3(\bs;Y),X-(u_0+u_1Y+u_2Y^2)\bigr)\\
&\ =\ X^3+(-3u_0-s_1u_1-s_1^2u_2+2s_2u_2)X^2+(3u_0^2+2s_1u_0u_1+s_2u_1^2\\
&\qquad +2s_1^2u_0u_2-4s_2u_0u_2+s_1s_2u_1u_2-3s_3u_1u_2+s_2^2u_2^2-2s_1s_3u_2^2)X\\
&\qquad -u_0^3-s_1u_0^2u_1-s_2u_0u_1^2-s_3u_1^3-s_1^2u_0^2u_2+2s_2u_0^2u_2
-s_1s_2u_0u_1u_2\\
&\qquad\ +3s_3u_0u_1u_2-s_1s_3u_1^2u_2-s_2^2u_0u_2^2+2s_1s_3u_0u_2^2-s_2s_3u_1u_2^2
-s_3^2u_2^3.
\end{align*}
By the definition, the elements $u_0,u_1,u_2$ satisfy 
\begin{align}
f_3(\bt;X)
=g_3(\bs,u_0^g,u_1^g,u_2^g;X)\ \ 
\mathrm{for}\ \ g\in \Gst.\label{eqfg}
\end{align}
We put 
\begin{align}
\As\,& {}:=\, s_1^2 - 3s_2,\nonumber\\ 
\Bs\,& {}:=\, 2s_1^3-9s_1s_2+27s_3,\label{defstd}\\
\Cs\,& {}:=\, s_1^4 - 4s_1^2s_2 + s_2^2 + 6s_1s_3,\nonumber\\
\Ds\,& {}:=\, \mathrm{Disc}_X f_3(\bs;X)=
s_1^2s_2^2-4s_2^3-4s_1^3s_3+18s_1s_2s_3-27s_3^2\quad (= \Dels^2).\nonumber
\end{align}
We can check the equality 
\begin{align}
4\As^3-\Bs^2\, =\, 27\Ds \label{eq1}
\end{align}
by direct calculation.

With the aid of computer algebra, we get the sextic polynomials 
\begin{equation*}
F_i(\bs,\bt;X)=\prod_{\overline{g}\in H\backslash \Gst}(X-u_i^g)\in K[X], \quad (i=1,2)
\end{equation*} 
as follows: 
\begin{align}
F_1(\bs,\bt;X)\, :=\ &X^6 -\frac{2\At\Cs}{\Ds} X^4
-\frac{(s_1s_2-s_3)\Bt}{\Ds} X^3 +\frac{\At^2\Cs^2}{\Ds^2} X^2\nonumber\\
&\quad \ +\frac{(s_1s_2-s_3)\At \Bt\Cs}{\Ds^2}X 
+\frac{(s_1s_2-s_3)^2\At^3 \Ds - \Cs^3 \Dt}{\Ds^3},\nonumber\\ 
F_2(\bs,\bt;X)\, :=\ &X^6 - \frac{2 \As \At}{\Ds} X^4+\frac{\Bt}{\Ds}X^3
\label{polyF2}\\
&\quad \ +\frac{\As^2 \At^2}{\Ds^2} X^2-\frac{\As \At \Bt}{\Ds^2}X
+\frac{\At^3 \Ds-\As^3\Dt}{\Ds^3}, \nonumber
\end{align}
where $\As, \Bs, \Cs$ and $\Ds$ are defined in {\rm (\ref{defstd})}. 
Here we omit the explicit description of the polynomial $F_0(\bs,\bt;X)$ because 
whose roots $u_0^g$ are available from $F_1(\bs,\bt;X)$ and 
$F_2(\bs,\bt;X)$ by (\ref{eqxy}) and (\ref{u0ch3}) below. 
The discriminant of the polynomial $F_2(\bs,\bt;X)$ 
with respect to $X$ is given by 
\begin{align}
D_{\bs,\bt}\, :=\, 
\frac{\Bs^6\Dt^3(\As^3\Bt^2-27\At^3\Ds)^2}{\Ds^{15}}.\label{eqD}
\end{align}
We note that $\As^3\Bt^2-27\At^3\Ds$ is invariant under the 
action $(x_1,x_2,x_3)\leftrightarrow (y_1,y_2,y_3)$. 
Indeed, by using (\ref{eq1}), we can obtain the following symmetric representation: 
\begin{align*}
\As^3\Bt^2-27\At^3\Ds\,=\,4\As^3\At^3-27(\At^3\Ds+\As^3\Dt).
\end{align*}
In the case of char $k\neq 2$, we also see that 
\begin{align*}
\As^3\Bt^2-27\At^3\Ds\,=\,\frac{\Bs^2\Bt^2-3^6\Ds\Dt}{4}. 
\end{align*}
By Proposition \ref{prop-decom}, we have the decomposition 
\begin{align*}
F_2(\bs,\bt;X)\, =\, F_2^+(X) F_2^-(X), 
\end{align*}
where $F_2^+(X)$ and $F_2^-(X)$ are elements of $K(\Dels/\Delt)[X]$ 
and of $K(\betas+\betat)[X]$ in the case of char $k=2$. 
In the case of char $k\neq 2$, we have, by the definition (\ref{defFpm}), 
\begin{align*}
F_2^+(X)\, & =\, \prod_{\overline{(1,\tau)}\in H\backslash \Gst\atop 
\psi(\tau)\in \mathfrak{A}_3} (X-u_2^{(1,\tau)})\, 
=\, X^3-\frac{\As\At}{\Ds}X+\frac{\Bt-\Bs(\Delt/\Dels)}{2\Ds},\\
F_2^-(X)\, & =\, \, \prod_{\overline{(1,\tau)}\in H\backslash \Gst\atop 
\psi(\tau)\not\in \mathfrak{A}_3} (X-u_2^{(1,\tau)})\, 
=\, X^3-\frac{\As\At}{\Ds}X+\frac{\Bt+\Bs(\Delt/\Dels)}{2\Ds}. 
\end{align*}
If char $k=2$, we have 
\begin{align*}
F_2^+(X)\, &=\, 
X^3+\frac{\As\At}{\Ds}X+\frac{s_1\As\Bt+t_1\At\Bs+\Bs\Bt(\betas+\betat)}{\Bs\Ds},\\
F_2^-(X)\, &=\, 
X^3+\frac{\As\At}{\Ds}X+\frac{s_1\As\Bt+t_1\At\Bs+\Bs\Bt(\betas+\betat+1)}{\Bs\Ds}. 
\end{align*}

When we specialize parameters 
$(\bs,\bt)\mapsto (\ba,\bb)\in M^3\times M^3$ for a fixed field 
$M\, (\supset k)$ with infinite elements, we assume 
that $f_3(\ba;X)$ and $f_3(\bb;X)$ are separable over $M$ 
(i.e. $D_\ba\cdot \Db\neq 0$). 
We define 
\begin{align*}
L_\ba &:=\mathrm{Spl}_M f_3(\ba;X),\hspace*{9mm}
L_\bb :=\mathrm{Spl}_M f_3(\bb;X),\\
G_\ba &:=\mathrm{Gal}(L_\ba/M),\hspace*{12mm}
G_\bb :=\mathrm{Gal}(L_\bb/M)
\end{align*}
and suppose $\# G_\ba\geq \# G_\bb$. 
We also suppose that $f_3(\ba;X)$ is irreducible over $M$. 
Then $G_\ba$ is isomorphic to either $\frak{S}_3$ or $\frak{A}_3=C_3$, 
and $G_\bb$ is isomorphic to one of $\frak{S}_3,C_3,C_2$ or $\{1\}$. 
We shall give an answer to the subfield problem of $f_3(\bs;X)$ via 
$F_j(\bs,\bt;X)$. 
More precisely, we give a necessary and sufficient condition to have 
$L_\ba\supseteq L_\bb$ for $\ba,\bb\in M^3$. 

\subsection{The case of char $k\neq 3$.}
First we study the case of char $k\neq 3$. 
The case of char $k=3$ will be studied in Subsection 4.4. 

By comparing the coefficients of (\ref{eqfg}) with respect to $X$, we obtain 
\begin{align}
u_0\, =\, \frac{t_1-s_1u_1-s_1^2u_2+2s_2u_2}{3},\quad 
u_1\, =\, \frac{Q_{1,2}(\bs,\bt;u_2)}{D_{1,2}(\bs,\bt;u_2)}\label{eqxy}
\end{align}
where 
\begin{align*}
Q_{1,2}(\bs,\bt;u_2)\, &:=\, 3\As^2\Bt-\At(6\As^3-\Bs^2+2\As\Bs s_1)u_2
+6\Ds(\As^2+\Bs s_1)u_2^3,\\
D_{1,2}(\bs,\bt;u_2)\, &:=\, 3\Bs(\As\At-3\Ds u_2^2).
\end{align*}
We also have
\begin{align}
u_0\, &=\frac{(t_1-s_1^2u_2+2s_2u_2)D_{1,2}(\bs,\bt;u_2)-s_1Q_{1,2}(\bs,\bt;u_2)}
{3D_{1,2}(\bs,\bt;u_2)}.\label{eqxy2}
\end{align}
From (\ref{eqxy}) and (\ref{eqxy2}), we can directly check $K(u_0^g,u_1^g,u_2^g)=K(u_2^g)$ 
for every $g\in \Gst$ as in Proposition \ref{prop1} where $K=k(\bs,\bt)$. 
As in (\ref{eqD0}), we obtain $D_{0,2}^0(\bs,\bt)\in k[\bs,\bt]$ and 
$D_{1,2}^0(\bs,\bt)\in k[\bs,\bt]$ as follows: 
first we see that there exist those $D_{1,2}^0(\bs,\bt)$ and $h_i(\bs,\bt)\in k[\bs,\bt]$ 
which satisfy 
\begin{align*}
\frac{1}{D_{1,2}(\bs,\bt;u_2)}\,=\,
\frac{1}{3\Bs(\As\At-3\Ds u_2^2)}\,=\,
\frac{1}{D_{1,2}^0(\bs,\bt)}\sum_{i=0}^5 h_i(\bs,\bt)u_2^i. 
\end{align*}
Actually, we get, with the aid of computer algebra, 
\[
D_{1,2}^0(\bs,\bt)\, :=\, 3\Bs(\As^3\Bt^2-27\At^3\Ds)^2
\]
and 
\begin{align*}
&\hspace*{-1.5cm}\{h_0(\bs,\bt),\ldots,h_5(\bs,\bt)\}\\
=\Big\{&4\As^2\At^2(\As^3\Bt^2 + 27\At^3\Ds - 27\Bt^2\Ds),\\
&27\Bt\Ds(4\As^3\At^3 + 9\At^3\Ds - 9\As^3\Dt),\\
&-3\As\At\Ds(5\As^3\Bt^2 + 135\At^3\Ds - 54\Bt^2\Ds),\\
&-270\As^2\At^2\Bt\Ds^2,\ 9\Ds^2(\As^3\Bt^2 + 27\At^3\Ds),\ 162\As\At\Bt\Ds^3\Big\}. 
\end{align*}
We define $D_{0,2}^0(\bs,\bt):=3\cdot D_{1,2}^0(\bs,\bt)\in k[\bs,\bt]$ on account of 
(\ref{eqxy2}). 
Hence we obtain those $P_{i,2}^0(\bs,\bt;X)\in k[\bs,\bt][X], (i=0,1)$, which satisfy 
\begin{align*}
u_i=\frac{1}{D_{i,2}^0(\bs,\bt)}\, P_{i,2}^0(\bs,\bt;u_2),\quad \mathrm{and}\quad 
\mathrm{deg}_X(P_{i,2}^0(\bs,\bt;X))=5. 
\end{align*}

After specializing parameters $(\bs,\bt)\mapsto (\ba,\bb)\in M^3\times M^3$ with 
$\Da\Db\neq 0$, we see the following lemma: 
\begin{lemma}\label{lemA}
$(\mathrm{i})$ If $f_3(\ba;X)$ is irreducible over $M$ then $\Ba\neq 0$\,{\rm ;}\\
$(\mathrm{ii})$ If $\Aa=0$, then $f_3(\ba;X)$ and $X^3-\Ba$ are Tschirnhausen equivalent. 
Hence the Galois group of $f_3(\ba;X)$ over $M(\sqrt{-3})$ is 
cyclic of order $3$\,{\rm ;}\\
$(\mathrm{iii})$ If $\Aa=0$, then $f_3(\ba;X)$ and $Y^3-3Y-(\Ba+1/\Ba)$ are 
Tschirnhausen equivalent over $M$. 
\end{lemma}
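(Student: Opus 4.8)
The plan is to derive all three parts from the substitution $X = Y + a_1/3$, which depresses the cubic since char $k\neq 3$, combined with one short identity in each case; for part~(iii) I will write down only one of the two Tschirnhausen transformations and invoke Lemma~\ref{lemM} for the other. For part~(i) I would simply evaluate $f_3(\ba;X)$ at $X = a_1/3$: a direct computation gives $27\, f_3(\ba; a_1/3) = -\Ba$, that is, $f_3(\ba; a_1/3) = -\Ba/27$. Consequently, if $\Ba = 0$ then $a_1/3 \in M$ is a root of $f_3(\ba;X)$, so $f_3(\ba;X)$ is reducible over $M$; contrapositively, irreducibility of $f_3(\ba;X)$ forces $\Ba \neq 0$.

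For part~(ii) I would check that the $M$-linear map $\alpha \mapsto 3\alpha - a_1$ carries the roots $\alpha_1,\alpha_2,\alpha_3$ of $f_3(\ba;X)$ to the roots of $X^3 - \Ba$: the first elementary symmetric function of the $3\alpha_i - a_1$ vanishes, the second equals $-3\Aa = 0$, and the third equals $\Ba$, so $\prod_i \bigl(X - (3\alpha_i - a_1)\bigr) = X^3 - \Ba$. This is a Tschirnhausen transformation defined over $M$, and its inverse $X \mapsto (X + a_1)/3$ is again $M$-linear; since both polynomials are separable ($\Ba \neq 0$ by~(i), and char $k \neq 3$), $f_3(\ba;X)$ and $X^3 - \Ba$ are Tschirnhausen equivalent over $M$. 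For the final assertion of~(ii), Tschirnhausen equivalence gives $\mathrm{Spl}_M f_3(\ba;X) = \mathrm{Spl}_M(X^3 - \Ba)$; irreducibility of $f_3(\ba;X)$ over $M$ makes $X^3 - \Ba$ irreducible over $M$, hence over $M(\sqrt{-3})$ as well since $3$ is coprime to $[M(\sqrt{-3}):M]\le 2$, and $M(\sqrt{-3})$ contains a primitive cube root of unity when char $k\neq 2$. Thus the splitting field over $M(\sqrt{-3})$ is a cyclic Kummer extension of degree $3$, i.e. $\mathrm{Gal}(f_3(\ba;X)/M(\sqrt{-3}))\cong C_3$. (When char $k = 2$ one replaces $\sqrt{-3}$ by the appropriate Berlekamp-type quadratic generator adjoining $\omega$, and the same argument applies.)

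For part~(iii), by~(ii) and transitivity of Tschirnhausen equivalence it suffices to connect $X^3 - \Ba$ with $Y^3 - 3Y - (\Ba + 1/\Ba)$, noting $\Ba \neq 0$. If $\delta^3 = \Ba$, I would set $\eta := \delta + \delta^{-1} = \delta + \Ba^{-1}\delta^2$, which is a Tschirnhausen transform with coefficients $c_0 = 0$, $c_1 = 1$, $c_2 = \Ba^{-1} \in M$; then $\eta^3 = (\delta^3 + \delta^{-3}) + 3(\delta + \delta^{-1}) = (\Ba + 1/\Ba) + 3\eta$, so $\eta$ satisfies $Y^3 - 3Y - (\Ba + 1/\Ba)$, and the three values $\delta_i + \delta_i^{-1}$ are pairwise distinct because $\delta_i\delta_j = 1$ would force $\Ba^2 = 1$. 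Hence $\prod_i\bigl(Y - (\delta_i + \delta_i^{-1})\bigr) = Y^3 - 3Y - (\Ba + 1/\Ba)$, i.e. the latter is a Tschirnhausen transformation of $X^3 - \Ba$ over $M$. Both cubics are separable --- $X^3 - \Ba$ because $\Ba \neq 0$, and $Y^3 - 3Y - (\Ba + 1/\Ba)$ because its discriminant equals $-27(\Ba - 1/\Ba)^2$ and $\Ba \neq \pm 1$ (since $X^3 - \Ba$ is irreducible over $M$, $\Ba$ is not a cube in $M$, so in particular $\Ba\neq\pm1$) --- so Lemma~\ref{lemM}, applied with $M' = M$, yields the reverse Tschirnhausen transformation, which completes the equivalence.

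The symmetric-function computations above are routine; the delicate point is ruling out the degenerate values $\Ba = 0$ and $\Ba = \pm 1$, which is precisely where the standing hypothesis that $f_3(\ba;X)$ is irreducible over $M$ enters. Without that hypothesis the target cubic $Y^3 - 3Y - (\Ba + 1/\Ba)$ in part~(iii) can be inseparable (e.g. $\Ba = 1$ gives $(Y+1)^2(Y-2)$) and is then not Tschirnhausen equivalent to the separable $X^3 - \Ba$, so this is the step I expect to need the most care.
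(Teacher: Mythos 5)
Your proof is correct and follows essentially the same route as the paper: parts (i) and (ii) come from the depressed-cubic identity $3^3 f_3(\bs;X)=(3X-s_1)^3-3\As(3X-s_1)-\Bs$ (your evaluation at $X=a_1/3$ and your affine root map $\alpha\mapsto 3\alpha-a_1$ are exactly this identity), and part (iii) from the substitution $Y=X+1/X=X(1+X/\Ba)$, which is the paper's transformation with coefficients $(0,1,1/\Ba)$. You merely add the separability and distinct-root checks (ruling out $\Ba=0,\pm1$) that the paper leaves implicit.
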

\begin{proof}
The statements of (i) and (ii) follow from the equality
\[
3^3\cdot f_3(\bs;X)=(3X-s_1)^3-3\As(3X-s_1)-\Bs. 
\]
In the case of $X^3-\Ba=0$, we put $Y=X+1/X=X(1+X/\Ba)$ and obtain 
$Y^3-3Y-(\Ba+1/\Ba)=0$. 
Thus we have (iii). 
\end{proof}

From Lemma \ref{lemA} (i), the assumption that $f_3(\ba;X)$ is irreducible 
over $M$ implies $B_\ba\neq 0$. 
From Lemma \ref{lemA} (iii), we may assume that $A_\ba\neq 0$ and 
$A_\bb\neq 0$ without loss of generality. 

By (\ref{eqD}) and the assumptions $\Ba\neq 0$ and $\Db\neq 0$, we see 
that the polynomial $F_2(\ba,\bb;X)$ has multiple roots if and only if 
$\Aa^3\Bb^2-27\Ab^3\Da=0$. 
\begin{lemma}\label{lemex}
Assume that $f_n(\ba;X)$ is irreducible and $\Aa\Ab\neq 0$. \\
$(\mathrm{i})$ If $\Aa^3\Bb^2-27\Ab^3\Da=0$ then $F_2(\ba,\bb;X)$ splits into the 
following form over $M$\,$:$
\begin{align*}
F_2(\ba,\bb;X)=
\Bigl(X-\frac{3\Ab^2}{\Aa\Bb}\Bigr)^2\Bigl(X+\frac{6\Ab^2}{\Aa\Bb}\Bigr)
\Bigl(X^3-\frac{27\Ab^4X}{\Aa^2\Bb^2}-\frac{27\Ab^3(2\Ab^3-\Bb^2)}{\Aa^3\Bb^3}\Bigr)
\end{align*}
with a simple root $-6\Ab^2/(\Aa\Bb)$\,{\rm ;}\\
$(\mathrm{ii})$ If $\Aa^3\Bb^2-27\Ab^3\Da=0$ then multiple roots 
$c=3\Ab^2/(\Aa\Bb)$ of $F_2(\ba,\bb;X)$ satisfy $\Aa\Ab-3\Da c^2=0$. 
Conversely if a root $c$ of $F_2(\ba,\bb;X)$ satisfies 
$\Aa\Ab-3\Da c^2=0$, then $\Aa^3\Bb^2-27\Ab^3\Da=0$. 
\end{lemma}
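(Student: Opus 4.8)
~\textbf{Proof plan for Lemma~\ref{lemex}.}
The plan is to treat both assertions as explicit factorization statements and verify them by direct, but carefully organized, calculation, leaning on the closed form of $F_2(\bs,\bt;X)$ in (\ref{polyF2}) and its discriminant (\ref{eqD}). For part (i), the starting point is that $F_2(\ba,\bb;X)$ has a multiple root precisely when $D_{\ba,\bb}=0$; since $\Ba\neq 0$ (from Lemma~\ref{lemA}(i) and irreducibility of $f_3(\ba;X)$) and $\Db\neq 0$ (since $f_3(\bb;X)$ is separable), formula (\ref{eqD}) forces the vanishing of the remaining factor $\Aa^3\Bb^2-27\Ab^3\Da$. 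So I would first substitute the relation $\Aa^3\Bb^2=27\Ab^3\Da$, i.e. $\Da=\Aa^3\Bb^2/(27\Ab^3)$, into the coefficients of $F_2(\ba,\bb;X)$ as displayed in (\ref{polyF2}) (with $\Da=\Dels^2$ playing the role of $\Ds$), and then show that the resulting sextic factors as claimed. Concretely, I would verify the product
\begin{align*}
\Bigl(X-\tfrac{3\Ab^2}{\Aa\Bb}\Bigr)^2\Bigl(X+\tfrac{6\Ab^2}{\Aa\Bb}\Bigr)
\Bigl(X^3-\tfrac{27\Ab^4}{\Aa^2\Bb^2}X-\tfrac{27\Ab^3(2\Ab^3-\Bb^2)}{\Aa^3\Bb^3}\Bigr)
\end{align*}
equals $F_2(\ba,\bb;X)$ after the substitution, by matching all seven coefficients; this is a finite check in the ring $\mathbb{Z}[\Aa^{\pm1},\Ab,\Ba^{\pm1},\Bb]$ once $\Da$ is eliminated. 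To see that $-6\Ab^2/(\Aa\Bb)$ is a simple (not multiple) root, I would note that a sextic with a nonzero discriminant factor pattern having exactly one repeated root of multiplicity $2$ leaves the cubic factor and the linear factor contributing distinct roots generically; more safely, I would check directly that $-6\Ab^2/(\Aa\Bb)$ is not a root of the cubic factor, which reduces to showing $\Ba\neq 0$ and $\Ab\neq 0$ — both of which hold by hypothesis.

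For part (ii), recall that the denominator appearing in (\ref{eqxy}) is $D_{1,2}(\bs,\bt;u_2)=3\Bs(\As\At-3\Ds u_2^2)$, so the locus $\Aa\Ab-3\Da c^2=0$ is exactly where the Tschirnhausen coefficient $u_1$ (hence the full tuple $(u_0,u_1,u_2)$) fails to be rationally recoverable from $u_2$ — which is the source of multiple roots. I would show the forward direction by substituting the claimed multiple root $c=3\Ab^2/(\Aa\Bb)$ into $\Aa\Ab-3\Da c^2$ and using $\Da=\Aa^3\Bb^2/(27\Ab^3)$ to get $\Aa\Ab-3\cdot\tfrac{\Aa^3\Bb^2}{27\Ab^3}\cdot\tfrac{9\Ab^4}{\Aa^2\Bb^2}=\Aa\Ab-\Aa\Ab=0$. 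For the converse, suppose $c$ is any root of $F_2(\ba,\bb;X)$ with $\Aa\Ab-3\Da c^2=0$; then $D_{1,2}(\ba,\bb;c)=0$, so the expression $u_1=Q_{1,2}/D_{1,2}$ degenerates. I would argue that this forces $Q_{1,2}(\ba,\bb;c)=0$ as well (otherwise $u_1$ would be infinite, contradicting that $c=u_2^g$ for an actual $g\in\Gst$ with finite Tschirnhausen data), and then eliminate between $\Aa\Ab-3\Da c^2=0$ and $Q_{1,2}(\ba,\bb;c)=0$: taking resultants in $c$ of these two polynomials, together with the fact that $c$ is also a root of the sextic $F_2(\ba,\bb;X)$, should yield $\Aa^3\Bb^2-27\Ab^3\Da=0$ (up to the nonvanishing factors $\Aa,\Ba,\Ab,\Bb$). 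Alternatively, and perhaps more cleanly, I would observe that $\Aa\Ab-3\Da c^2=0$ determines $c^2=\Aa\Ab/(3\Da)$, substitute this into $F_2(\ba,\bb;c)=0$ (using the explicit coefficients in (\ref{polyF2}) to reduce all even powers of $c$ and pair off the odd ones), and read off the condition on the parameters directly.

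The main obstacle I anticipate is purely the bookkeeping in part (ii)'s converse: the polynomial $Q_{1,2}$ is cubic in $u_2$ with coefficients built from $\Aa,\Ba,a_1$ (and similarly $\Ab,\Bb$), so eliminating $c$ between a quadratic constraint, a cubic, and the sextic $F_2$ requires a resultant computation that is best delegated to computer algebra (as the paper already does elsewhere). The conceptual content is light — everything is forced by the discriminant formula (\ref{eqD}) and the explicit denominator $D_{1,2}$ — but organizing the substitutions so the parameter identity $\Aa^3\Bb^2-27\Ab^3\Da$ emerges cleanly, and correctly tracking which of $\Aa,\Ab,\Ba,\Bb$ must be assumed nonzero at each step, is where care is needed. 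I would also double-check the ``simple root'' claim in (i) separately, since the stated factorization alone does not immediately exclude an accidental coincidence between $-6\Ab^2/(\Aa\Bb)$ and a root of the cubic factor; verifying that substituting $X=-6\Ab^2/(\Aa\Bb)$ into $X^3-\tfrac{27\Ab^4}{\Aa^2\Bb^2}X-\tfrac{27\Ab^3(2\Ab^3-\Bb^2)}{\Aa^3\Bb^3}$ gives a nonzero multiple of $\Ab^3/(\Aa^3\Bb^3)$ settles it.
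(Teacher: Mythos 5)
Your plan is correct and, for part (i) and the forward half of (ii), it is essentially the paper's proof: the authors substitute $\Da=\Aa^3\Bb^2/(27\Ab^3)$ \emph{and} $\Db=(4\Ab^3-\Bb^2)/27$ (the latter from the identity (\ref{eq1}), which you omit but need, since $\Db$ appears in the constant term of (\ref{polyF2})) into $F_2(\ba,\bb;X)$, display the resulting sextic, factor it, and certify that $-6\Ab^2/(\Aa\Bb)$ is simple by checking $3\Ab^2/(\Aa\Bb)\neq -6\Ab^2/(\Aa\Bb)$ and computing the resultant of the linear and cubic factors, which equals $-4\Ab^3+\Bb^2=-27\Db\neq 0$. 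Two small corrections there: you should record that $\Bb\neq 0$ (immediate from $\Aa^3\Bb^2=27\Ab^3\Da\neq 0$) before dividing by it, and the nonvanishing that makes $-6\Ab^2/(\Aa\Bb)$ a simple root is $\Db\neq 0$, not ``$\Ba\neq 0$ and $\Ab\neq 0$'' as you first wrote --- your own final-paragraph substitution yields $27\Ab^3(\Bb^2-4\Ab^3)/(\Aa^3\Bb^3)$, whose nonvanishing is exactly separability of $f_3(\bb;X)$. Where you genuinely diverge is the converse of (ii): the paper disposes of it in one line by noting that $\Aa\Ab-3\Da c^2=0$ forces $D_{1,2}^0(\ba,\bb)=3\Ba(\Aa^3\Bb^2-27\Ab^3\Da)^2=0$ via the precomputed identity $D_{1,2}^0(\bs,\bt)=D_{1,2}(\bs,\bt;u_2)\cdot\sum_{i}h_i(\bs,\bt)u_2^i$, then divides by $\Ba\neq 0$; your second route --- substituting $c^2=\Aa\Ab/(3\Da)$ into $F_2(\ba,\bb;c)=0$, clearing denominators and using $27\Db=4\Ab^3-\Bb^2$ to reach $\Aa^3\Bb^2+27\Ab^3\Da=18\,c\,\Aa\Ab\Bb\Da$, then squaring to get $(\Aa^3\Bb^2-27\Ab^3\Da)^2=0$ --- is self-contained, checks out, and avoids the $D_{1,2}^0$ machinery entirely at the cost of a slightly longer computation. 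Your first route (forcing $Q_{1,2}(\ba,\bb;c)=0$ and eliminating by resultants) would also work but is the least direct of the three; I would commit to the substitution argument.
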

\begin{proof}
(i) We have $\Bb\neq 0$ since $27\Ab^3\Da\neq 0$. 
Using $\Da=\Aa^3\Bb^2/(27\Ab^3)$ and $\Db=(4\Ab^3-\Bb^2)/27$, 
we eliminate $\Da$ and $\Db$ from $F_2(\ba,\bb;X)$ via (\ref{polyF2}). 
Then we obtain explicit factors of $F_2(\ba,\bb;X)$ as 
\begin{align*}
&F_2(\ba,\bb;X)\\
&=X^6-\frac{54\Ab^4X^4}{\Aa^2\Bb^2}+\frac{27\Ab^3X^3}{\Aa^3\Bb}
+\frac{729\Ab^8 X^2}{\Aa^4\Bb^4}-\frac{729\Ab^7X}{\Aa^5\Bb^3}
-\frac{1458\Ab^9(2\Ab^3-\Bb^2)}{\Aa^6\Bb^6}\\
&=\Bigl(X-\frac{3\Ab^2}{\Aa\Bb}\Bigr)^2\Bigl(X+\frac{6\Ab^2}{\Aa\Bb}\Bigr)
\Bigl(X^3-\frac{27\Ab^4X}{\Aa^2\Bb^2}-\frac{27\Ab^3(2\Ab^3-\Bb^2)}{\Aa^3\Bb^3}\Bigr).
\end{align*}
We also see that $-6\Ab^2/(\Aa\Bb)$ is a simple root of $F_2(\ba,\bb;X)$ since 
$3\Ab^2/(\Aa\Bb)\neq -6\Ab^2/(\Aa\Bb)$ and
\begin{align*}
&\mathrm{Resultant}_X\Bigl(X+\frac{6\Ab^2}{\Aa\Bb}, 
X^3-\frac{27\Ab^4X}{\Aa^2\Bb^2}-\frac{27\Ab^3(2\Ab^3-\Bb^2)}{\Aa^3\Bb^3}\Bigr)\\
&=-4\Ab^3+\Bb^2=-27\Db\neq 0.
\end{align*}
(ii) For $c=3\Ab^2/(\Aa\Bb)$, the condition $\Aa^3\Bb^2-27\Ab^3\Da=0$ implies 
\begin{align*}
\Aa\Ab-3\Da c^2=\Aa\Ab-3\Bigl(\frac{\Aa^3\Bb^2}{27\Ab^3}\Bigr)
\Bigl(\frac{3\Ab^2}{\Aa\Bb}\Bigr)^2=\Aa\Ab-\Aa\Ab=0. 
\end{align*}
Conversely if $(\Aa \Ab-3\Da c^2)=0$ then 
$D_{1,2}^0(\ba,\bb)=3\Ba(\Aa^3\Bb^2-27\Ab^3\Da)^2=0$. 
\end{proof}
We obtain a solution to the field isomorphism problem of $f_3(\bs;X)$ as 
a special case of Theorem \ref{throotf} and Corollary \ref{cor1} 
in Section \ref{seSpe}. 
\begin{theorem}\label{thiso}
Assume that $f_n(\ba;X)$ is irreducible and $\Aa\Ab\neq 0$. 
Then we have\\
$(1)$ If $\Aa^3\Bb^2-27\Ab^3\Da=0$ then 
$\mathrm{Spl}_M f_3(\ba;X)=\mathrm{Spl}_M f_3(\bb;X)$\,{\rm ;}\\
$(2)$ If $\Aa^3\Bb^2-27\Ab^3\Da\neq 0$ then the following two 
conditions are equivalent\,{\rm :}\\ 
{\rm (i)} $\mathrm{Spl}_M f_3(\ba;X)
=\mathrm{Spl}_M f_3(\bb;X)$\,{\rm ;}\\
{\rm (ii)} The sextic polynomial $F_2(\ba,\bb;X)$ has a root in $M$. 
\end{theorem}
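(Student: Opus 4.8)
The plan is to deduce Theorem~\ref{thiso} from Theorem~\ref{throotf} and Corollary~\ref{cor1} applied with $n=3$, handling the two cases separately according to whether $F_2(\ba,\bb;X)$ has a multiple root. The hypothesis that $f_3(\ba;X)$ is irreducible over $M$ forces $G_\ba$ to be isomorphic to a transitive subgroup of $\frak{S}_3$, i.e. to $\frak{S}_3$ or $C_3$; in either case all subgroups of index $3$ are conjugate (these groups are covered by the remark after Corollary~\ref{cor1}), so the root-field criterion of Theorem~\ref{throotf} translates into the splitting-field statement of Corollary~\ref{cor1}. Thus throughout I may identify ``$M[X]/(f_3(\ba;X))\cong M[X]/(f_3(\bb;X))$ over $M$'' with ``$\mathrm{Spl}_M f_3(\ba;X)=\mathrm{Spl}_M f_3(\bb;X)$'', provided $f_3(\bb;X)$ is also irreducible; if $f_3(\bb;X)$ were reducible while $f_3(\ba;X)$ is irreducible, the two root algebras cannot be $M$-isomorphic and the two splitting fields cannot coincide, so the equivalence is maintained trivially.

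For part~(2), assume $\Aa^3\Bb^2-27\Ab^3\Da\neq 0$. By the discriminant formula~(\ref{eqD}), namely $D_{\bs,\bt}=\Bs^6\Dt^3(\As^3\Bt^2-27\At^3\Ds)^2/\Ds^{15}$, together with the standing hypotheses $\Da\Db\neq0$ (separability), $\Ba\neq0$ (from irreducibility of $f_3(\ba;X)$ and Lemma~\ref{lemA}(i)), and $\Aa\Ab\neq0$, the specialized discriminant $D_{\ba,\bb}$ is nonzero, so $F_2(\ba,\bb;X)$ has no multiple root. Hence Theorem~\ref{throotf} applies verbatim with $j=2$: $F_2(\ba,\bb;X)$ has a root in $M$ if and only if the root algebras $M[X]/(f_3(\ba;X))$ and $M[X]/(f_3(\bb;X))$ are $M$-isomorphic, which by the conjugacy remark above and Corollary~\ref{cor1} is equivalent to $\mathrm{Spl}_M f_3(\ba;X)=\mathrm{Spl}_M f_3(\bb;X)$. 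This gives the equivalence of (i) and (ii) in part~(2).

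For part~(1), assume $\Aa^3\Bb^2-27\Ab^3\Da=0$. Now $F_2(\ba,\bb;X)$ does have a multiple root and Theorem~\ref{throotf} is not directly available, so I instead invoke the explicit factorization of Lemma~\ref{lemex}(i): under this hypothesis
\[
F_2(\ba,\bb;X)=\Bigl(X-\frac{3\Ab^2}{\Aa\Bb}\Bigr)^2\Bigl(X+\frac{6\Ab^2}{\Aa\Bb}\Bigr)\Bigl(X^3-\frac{27\Ab^4X}{\Aa^2\Bb^2}-\frac{27\Ab^3(2\Ab^3-\Bb^2)}{\Aa^3\Bb^3}\Bigr),
\]
and the linear factor exhibits $c:=-6\Ab^2/(\Aa\Bb)\in M$ as a \emph{simple} root of $F_2(\ba,\bb;X)$ (the resultant computation in Lemma~\ref{lemex}(i) shows it is coprime to the cubic factor, and it is distinct from the double root). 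The point is that although $F_2(\ba,\bb;X)$ has a multiple root globally, this particular root $c$ is simple, so the ``forward'' direction of the argument in Theorem~\ref{throotf} still goes through at $c$: by Lemma~\ref{lemex}(ii) the simple root $c$ satisfies $\Aa\Ab-3\Da c^2\neq0$, hence it lies outside the locus where the denominators $D_{i,2}^0$ of~(\ref{eqxy}),(\ref{eqxy2}) vanish, so the corresponding $c_0^g,c_1^g$ are recovered from $c_2^g=c$ and lie in $M$; then $X\mapsto c_0^g+c_1^g\alpha_i+c_2^g\alpha_i^2$ sends roots of $f_3(\ba;X)$ to roots of $f_3(\bb;X)$, yielding an $M$-isomorphism of root algebras exactly as in the proof of Theorem~\ref{throotf}, and therefore $\mathrm{Spl}_M f_3(\ba;X)=\mathrm{Spl}_M f_3(\bb;X)$ by Corollary~\ref{cor1}. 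The main obstacle, and the step requiring care, is precisely this: Theorem~\ref{throotf} as stated assumes $F_j$ has no multiple root, so in case~(1) one must re-examine its proof and verify that having a single \emph{simple} $M$-rational root $c$ lying off the bad locus $\Aa\Ab-3\Da c^2=0$ is enough to run the Tschirnhausen reconstruction; Lemma~\ref{lemex} is tailored to supply exactly that, and it is where the bulk of the work sits.
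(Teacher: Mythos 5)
Your proposal is correct and follows essentially the same route as the paper: part (2) via the nonvanishing of the discriminant (\ref{eqD}) followed by Theorem \ref{throotf}/Corollary \ref{cor1}, and part (1) via the simple rational root $-6\Ab^2/(\Aa\Bb)$ supplied by Lemma \ref{lemex}(i) together with recovery of $c_0^g,c_1^g\in M$ from $c_2^g$ through (\ref{eqxy}) and (\ref{eqxy2}). The one point to tighten is that $\Aa\Ab-3\Da c^2\neq 0$ at the simple root is not literally what Lemma \ref{lemex}(ii) asserts (it only characterizes the vanishing locus); the paper just computes $D_{1,2}(\ba,\bb;c_2^g)=3\Ba(\Aa\Ab-3\Da (c_2^g)^2)=-9\Ba\Aa\Ab\neq 0$ directly, and note that the relevant nonvanishing denominator is this local one, not $D_{1,2}^0(\ba,\bb)$, which actually does vanish in case (1).
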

\begin{proof}
(1) Suppose $\Aa^3\Bb^2-27\Ab^3\Da=0$; then it follows from Lemma \ref{lemex} (i) 
that $F_2(\ba,\bb;X)$ has a simple root $c_2^g=-(6\Ab^2)/(\Aa\Bb)\in M$ for some $g\in \Gst$. 
Then $D_{1,2}(\ba,\bb;c_2^g)=3\Ba(\Aa\Ab-3\Da(c_2^g)^2)=-9\Ba\Aa\Ab\neq 0$. 
Thus, by (\ref{eqxy}) and (\ref{eqxy2}), we have $M(c_0^g,c_1^g,c_2^g)=M(c_2^g)=M$, 
and hence the assertion follows. 

(2) If $\Aa^3\Bb^2-27\Ab^3\Da\neq 0$ then 
$D_{0,2}^0(\ba,\bb)\cdot D_{1,2}^0(\ba,\bb)\neq 0$. 
Thus the assertion follows from Theorem \ref{throotf} (or Corollary \ref{cor1}). 
\end{proof}
\begin{example}
We give two examples which satisfy $\Aa^3\Bb^2-27\Ab^3\Da=0$. 

(1) We take $M=\mathbb{Q}$ and $\ba=(0,3,-2)$, $\bb=(3,-3,3)\in M$. 
Then 
\begin{align*}
f_3(\ba;X)=X^3+3X+2,\quad f_3(\bb;X)=X^3-3X^2-3X-3.
\end{align*}
We have $(\Aa,\Ba,\Ca,\Da)=(-9,-54,9,-216)$ and 
$(\Ab,\Bb,\Db)=(18,216,-864)$. 
The Galois group of $f_3(\ba;X)$ and $f_3(\bb;X)$ over $\mathbb{Q}$ are 
isomorphic to $\mathfrak{S}_3$ because $f_3(\ba;X)$ and $f_3(\bb;X)$ 
are irreducible over $\mathbb{Q}$, and neither of $\Da=-2^3\cdot 3^3$ 
and $\Db=-2^5\cdot 3^3$ is a square in $M$. 
We also have $\Aa^3\Bb^2-27\Ab^3\Da=0$. 
By Lemma \ref{lemex} (i) we have 
\begin{align*}
F_2(\ba,\bb;X)=\Bigl(X+\frac{1}{2}\Bigr)^2\Bigl(X-1\Bigr)
\Bigl(X^3-\frac{3X}{4}-\frac{3}{4}\Bigl). 
\end{align*}
Hence we take $c_2^g=1$ and get $(c_0^g,c_1^g,c_2^g)=(3,-1,1)$ 
by (\ref{eqxy}) and (\ref{eqxy2}). 
Thus we see that 
$\mathbb{Q}[X]/f_3(\ba;X)\cong_\mathbb{Q}\mathbb{Q}[X]/f_3(\bb;X)$.  
An explicit Tschirnhausen transformation from $f_3(\ba;X)$ to 
$f_3(\bb;X)$ over $\mathbb{Q}$ is given by 
\begin{align*}
f_3(\bb;Y)=\mathrm{Resultant}_X(f_3(\ba;X),Y-(3-X+X^2)). 
\end{align*}
We also obtain 
\begin{align*}
F_1(\ba,\bb;X)\, &=\, \Bigl(X^2-X+\frac{7}{4}\Bigr)\Bigl(X+1\Bigr)
\Bigl(X^3+\frac{3X}{4}+\frac{1}{4}\Bigr),\\
F_0(\ba,\bb;X)\, &=\, X^2(X-3)(X^3-3X^2-4). 
\end{align*}

(2) Take $M=\mathbb{Q}$ and $\ba=(-3,-4,-1)$, $\bb=(-1,-2,1)\in M$. Then 
\begin{align*}
f_3(\ba;X)=X^3+3X^2-4X+1,\quad f_3(\bb;X)=X^3+X^2-2X-1.
\end{align*}
We have $(\Aa,\Ba,\Ca,\Da)=(21,-189,259,49)$ and $(\Ab,\Bb,\Db)=(7,7,49)$. 
The Galois group of $f_3(\ba;X)$ and $f_3(\bb;X)$ over $\mathbb{Q}$ are 
isomorphic to $C_3$ because $f_3(\ba;X)$ and $f_3(\bb;X)$ are 
irreducible over $\mathbb{Q}$ and $\Da=\Db=7^2$. 
Since $\Aa^3\Bb^2-27\Ab^3\Da=0$, we have, by Lemma \ref{lemex} (i), 
\begin{align*}
F_2(\ba,\bb;X)=\Bigl(X-1\Bigr)^2\Bigl(X+2\Bigr)
\Bigl(X^3-3X-\frac{13}{7}\Bigl). 
\end{align*}
Thus we have $c_2^g=-2$. 
We obtain $(c_0^g,c_1^g,c_2^g)=(4,-7,-2)$ by (\ref{eqxy}) and (\ref{eqxy2}), and 
$\mathbb{Q}[X]/f_3(\ba;X)\cong_\mathbb{Q}\mathbb{Q}[X]/f_3(\bb;X)$. 
An explicit Tschirnhausen transformation from $f_3(\ba;X)$ to 
$f_3(\bb;X)$ over $\mathbb{Q}$ is given by 
\begin{align*}
f_3(\bb;Y)=\mathrm{Resultant}_X(f_3(\ba;X),Y-(4-7X-2X^2)). 
\end{align*}
We also obtain
\begin{align*}
F_1(\ba,\bb;X)\, &=\, \Bigl(X-3\Bigr)\Bigl(X-4\Bigr)\Bigl(X+7\Bigr)
\Bigl(X^3-37X-\frac{601}{7}\Bigr),\\
F_0(\ba,\bb;X)\, &=\, \Bigl(X+3\Bigr)\Bigl(X+2\Bigr)\Bigl(X-4\Bigr)
\Bigl(X^3+X^2-14X+\frac{71}{7}\Bigr). 
\end{align*}
Hence we have the other two Tschirnhausen transformations from 
$f_3(\ba;X)$ to $f_3(\bb;X)$ over $\mathbb{Q}$ by (\ref{eqxy}) as 
\begin{align*}
f_3(\bb;Y)=\mathrm{Resultant}_X(f_3(\ba;X),Y-(-3+3X+X^2)),\\
f_3(\bb;Y)=\mathrm{Resultant}_X(f_3(\ba;X),Y-(-2+4X+X^2)). 
\end{align*}
\end{example}

\begin{theorem}\label{th}
For $\ba,\bb\in M^3$, we assume that 
$f_n(\ba;X)$ is irreducible, $\Aa\Ab\neq 0$ and $\Aa^3\Bb^2-27\Ab^3\Da\neq 0$.
The decomposition type of irreducible factors 
$h_{\mu}(X)$ of $F_2(\ba,\bb;X)$ over $M$ gives an answer to 
the subfield problem of $f_3(\bs;X)$ as on Table $1$. 
Moreover a root field $M_{\mu}$ of each $h_{\mu}(X)$ satisfies 
$\mathrm{Spl}_{M_{\mu}} f_3(\ba,X)=\mathrm{Spl}_{M_{\mu}} f_3(\bb,X)$.
\begin{center}
{\rm Table} $1$\vspace*{4mm}\\
\begin{tabular}{|c|c|l|l|}\hline
$G_\ba$& $G_\bb$ & & $(d_{\mu}), d_{\mu}=\mathrm{deg}(h_{\mu}(X))$\\ \hline 
& & $L_\ba\neq L_\bb, 
L_\ba\cap L_\bb=M$ & $(6)$\\ \cline{3-4} 
& $\frak{S}_3$ & $L_\ba\neq L_\bb, 
[L_\ba\cap L_\bb : M]=2$ & $(3)(3)$\\ \cline{3-4}
& & $L_\ba=L_\bb$ & $(1)(2)(3)$\\ \cline{2-4}
$\frak{S}_3$ & $C_3$ & $L_\ba\cap L_\bb=M$ & $(6)$\\ \cline{2-4}
& \raisebox{-1.5ex}[0cm][0cm]{$C_2$} & $L_\ba\not\supset L_\bb$ 
& $(6)$\\ \cline{3-4}
& & $L_\ba\supset L_\bb$ & $(3)(3)$\\ \cline{2-4}
& $\{1\}$ & $L_\ba\supset L_\bb$ & $(6)$\\ \hline
& \raisebox{-1.6ex}[0cm][0cm]{$C_3$} & $L_\ba\neq L_\bb$ 
& $(3)(3)$\\ \cline{3-4}
\raisebox{-1.5ex}[0cm][0cm]{$C_3$}
& & $L_\ba=L_\bb$ & $(1)(1)(1)(3)$\\ \cline{2-4}
& $C_2$ & $L_\ba\cap L_\bb=M$ & $(6)$\\ \cline{2-4}
& $\{1\}$ & $L_\ba\supset L_\bb$ & $(3)(3)$\\ \hline
\end{tabular}
\vspace*{4mm}
\end{center}
\end{theorem}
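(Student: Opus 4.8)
The core tool is Theorem~\ref{throotf} together with Corollary~\ref{cor1}: since $f_3(\ba;X)$ is irreducible, $\Ga\cong\frak{S}_3$ or $C_3$, and in either case all subgroups of index $3$ are conjugate, so a root of $F_2(\ba,\bb;X)$ in $M$ is equivalent to $L_\ba\supseteq L_\bb$ (when $\#\Ga\geq\#\Gb$) — and more generally to the existence of a Tschirnhausen transformation $f_3(\ba;X)\to f_3(\bb;X)$ over $M$. The hypothesis $\Aa^3\Bb^2-27\Ab^3\Da\neq0$ guarantees via~(\ref{eqD}) and Lemma~\ref{lemex}(ii) that $F_2(\ba,\bb;X)$ has no multiple root, so Proposition~\ref{prop12} and Theorem~\ref{throotf} apply to the choice $j=2$. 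The overall strategy is: for each pair $(\Ga,\Gb)$ in Table~1, analyse how $\Gab\subseteq\Gs\times\Gt$ acts on the six roots $\{c_2^g\mid\overline g\in H\backslash\Gst\}$ as a transitive $H\backslash\Gst$-set, read off the cycle type of each element of $\Gab$ on these six points, and thereby determine the degrees $(d_\mu)$ of the $M$-irreducible factors $h_\mu(X)$ of $F_2(\ba,\bb;X)$; then invoke Proposition~\ref{propc} to translate a root $c_2^g$ of $h_\mu$ into the statement $\mathrm{Spl}_{M_\mu}f_3(\ba;X)=\mathrm{Spl}_{M_\mu}f_3(\bb;X)$.

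\textbf{Step 1: the group-theoretic dictionary.} First I would fix the model $H\backslash\Gst\cong\{(1,\tau)\mid\tau\in\Gt\}$ so that $\Gst$ acts on the six cosets through right translation; the stabiliser of $c_2^g$ is $g^{-1}Hg\cap\Gab$ by Proposition~\ref{prop12}(i). For $G=\Gab$ a subgroup of $\frak{S}_3\times\frak{S}_3$, the orbits of $G$ on $H\backslash\Gst$ determine the factorisation type. Concretely: if $\Ga=\Gb=\frak{S}_3$ and $L_\ba,L_\bb$ are linearly disjoint then $\Gab\cong\frak{S}_3\times\frak{S}_3$ acts transitively on all six cosets, giving $(6)$; if $L_\ba\cap L_\bb$ is the common quadratic subfield then $\Gab$ is the index-two subgroup $\{(\sigma,\tau):\varepsilon(\sigma)=\varepsilon(\tau)\}$, which has two orbits of size $3$ (distinguished by $\Dela/\Delb$, cf.\ Proposition~\ref{propsplit}), giving $(3)(3)$; if $L_\ba=L_\bb$ then $\Gab\cong\frak{S}_3$ embedded diagonally-up-to-an-automorphism, and its action on $H\backslash\Gst\cong\frak{S}_3$ by right translation composed with a twist decomposes as a fixed point, a $2$-cycle orbit, and a $3$-cycle orbit, giving $(1)(2)(3)$ — indeed the fixed point is exactly the coset whose $c_2^g$ lies in $M$, recovering the ``only if'' direction of the isomorphism criterion. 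The remaining rows ($\Gb=C_3,C_2,\{1\}$, and $\Ga=C_3$) are handled the same way by computing orbit sizes of the relevant subgroups of $\frak{S}_3\times\frak{S}_3$ on the six-point set, using in the $C_3$-rows that $\Dela/\Delb\in M$ forces the $(3)(3)$ split by Proposition~\ref{propsplit} and its corollary.

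\textbf{Step 2: from factors to subfields.} Once the degree vector $(d_\mu)$ is identified with each row, the converse reading — that the factorisation type \emph{determines} the subfield relation — follows because distinct rows in a fixed $(\Ga,\Gb)$ block produce distinct vectors $(d_\mu)$ (this is visible directly from Table~1 and is where one checks there is no collision). Finally, for the last assertion: if $c_2^g$ is a root of the irreducible factor $h_\mu$ with root field $M_\mu=M(c_2^g)$, then since $\Aa^3\Bb^2-27\Ab^3\Da\neq0$ we have $D^0_{0,2}(\ba,\bb)D^0_{1,2}(\ba,\bb)\neq0$, so Lemma~\ref{lemD0} gives $M_\mu(c_0^g,c_1^g,c_2^g)=M_\mu$; then Proposition~\ref{propc}(i) yields $\mathrm{Spl}_{M_\mu}f_3(\ba;X)=\mathrm{Spl}_{M_\mu}f_3(\bb;X)$, as claimed.

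\textbf{Main obstacle.} The genuine work is not Theorem~\ref{throotf}, which is already in hand, but the bookkeeping in Step~1: enumerating, for every possible $\Gab\subseteq\frak{S}_3\times\frak{S}_3$ compatible with the prescribed $\Ga,\Gb$ and with a prescribed intersection $L_\ba\cap L_\bb$, its orbit decomposition on the six Tschirnhausen cosets, and verifying that this orbit data is a complete invariant distinguishing the rows of Table~1. The subtle point is that $\Gab$ is not an arbitrary subdirect product: the specialisation imposes $\Dela/\Delb\in M$ exactly when the discriminants match, which (via Proposition~\ref{prop-decom}/\ref{propsplit}) pins down whether $\Gab$ lies in the index-two ``even'' subgroup, and one must check that in the $C_2$ and $\{1\}$ rows the extra collapsing of $\Gb$ still leaves the action transitive on a union of full $H$-cosets so that the listed $(6)$ and $(3)(3)$ patterns are forced. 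Once this case analysis is complete the theorem follows by assembling Steps~1 and~2.
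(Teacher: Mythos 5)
Your proposal is correct and follows essentially the same route as the paper: both rest on Proposition~\ref{prop12} (no multiple roots since $\Aa^3\Bb^2-27\Ab^3\Da\neq 0$), the compositum identity $L_\ba L_\bb=L_\ba M(c_2^g)=L_\bb M(c_2^g)$, Proposition~\ref{propsplit} for the $(3)(3)$ splittings, Theorem~\ref{thiso} for detecting the linear factor, and a case-by-case analysis over $(\Ga,\Gb)$ and $L_\ba\cap L_\bb$. The only cosmetic difference is that you compute orbit sizes of $\Gab$ on the six cosets $H\backslash\Gst$ while the paper computes $[M(c_2^g):M]$ directly from the compositum identity; these are equivalent by the Galois correspondence once the roots are known to be distinct.
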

\begin{proof}
Form Proposition \ref{prop12} and the assumption $\Aa^3\Bb^2-27\Ab^3\Da\neq 0$, we have 
\begin{align}
L_\ba L_\bb=L_\ba\, M(c_2^g)=L_\bb\, M(c_2^g)\quad \mathrm{for}\quad g\in G_{\bs,\bt}.
\label{eqLLc}
\end{align}
Hence two polynomials $f_3(\ba;X)$ and $f_3(\bb;X)$ are Tschirnhausen 
equivalent over a root field $M_{\mu}$ of each irreducible factor 
$h_{\mu}(X)$ of $F_2(\ba,\bb;X)$. 

(i) The case of $G_\ba\cong \frak{S}_3$. \\
(i-1) If $L_\ba\cap L_\bb=M$ then it follows from (\ref{eqLLc}) that $[M(c_2^g)\, :\, M]=6$. 
Hence $F_2(\ba,\bb;X)$ is irreducible over $M$. \\
(i-2) If $[L_\ba\cap L_\bb\, :\, M]=2$ then, by Proposition \ref{propsplit}, $F_2(\ba,\bb;X)$ 
splits into two cubic factors $F_2^+(X)$ and $F_2^-(X)$ over $M$, and 
each cubic factor is irreducible over $M$ because we have 
$[M(c_2^g)\, :\, M]\geq 3$ from (\ref{eqLLc}). \\
(i-3) If $G_\bb\cong\mathfrak{S}_3$ and $L_\ba=L_\bb$ then 
$F_2(\ba,\bb;X)$ also splits into two cubic factors over $M$, and one of 
them must has a linear factor from Theorem \ref{thiso}. 
By Proposition \ref{prop12} (ii), we see that the factorization pattern of 
$F_2(\ba,\bb;X)$ is equal to $(1)(2)(3)$. 

(ii) The case of $G_\ba\cong C_3$. \\
(ii-1) If $G_\bb\cong C_3$ and $L_\ba\neq L_\bb$ then, by 
Proposition \ref{propsplit}, $F_2(\ba,\bb;X)$ splits into two cubic factors 
over $M$ and each cubic factor is irreducible over $M$ because we have 
$[M(c_2^g)\, :\, M]\geq 3$ from (\ref{eqLLc}). \\
(ii-2) If $G_\bb\cong C_3$ and $L_\ba=L_\bb$ then $F_2(\ba,\bb;X)$ also 
splits into two cubic factors over $M$, and one of them must has a linear factor from Theorem 
\ref{thiso}. 
By Proposition \ref{prop12} (ii), we see that the factorization pattern of 
$F_2(\ba,\bb;X)$ is equal to $(1)(1)(1)(3)$. \\
(ii-3) If $G_\bb\cong C_2$ then it follows from (\ref{eqLLc}) that $[M(c_2^g)\, :\, M]=6$. 
Hence $F_2(\ba,\bb;X)$ is irreducible over $M$. \\
(ii-4) If $G_\bb\cong\{1\}$ then, by Proposition \ref{propsplit}, 
$F_2(\ba,\bb;X)$ splits into two cubic factors $F_2^+(X)$ and $F_2^-(X)$ 
over $M$, and each cubic factor is irreducible over $M$ because we have 
$[M(c_2^g)\, :\, M]=3$. 
\end{proof}
%

\subsection{Special case $1 : X^3+S_2X-S_3$.}
Assume that char $k\neq 3$. 
We treat a $k$-generic polynomial of the form $f_3(0,S_2,S_3;X)=X^3+S_2X-S_3$. 
Define $\mathbf{X}:=(X_1, X_2, X_3)$, 
\[
X_1\,:=\,x_1-s_1/3,\ X_2\,:=\,x_2-s_1/3,\ X_3\,:=\,x_3-s_1/3. 
\]
Then $k(\mathbf{X}):=k(X_1,X_2,X_3)\subset k(x_1,x_2,x_3)$ and $X_1+X_2+X_3=0$. 
The action of $\frak{S}_3$ on $k(x_1,x_2,x_3)$ induces an action on 
$k(\mathbf{X})$ which is linear and faithful. 
We see $k(\mathbf{X})^{\frak{S}_3}=k(\mathbf{S})$ where 
$\mathbf{S}=(S_1,S_2,S_3)$ and $S_i$ is the $i$-th elementary symmetric 
function in $X_1,X_2,X_3$. We have 
\[
S_1\,=\,0,\ S_2\,=\,-\frac{\As}{3}=\frac{-(s_1^2-3s_2)}{3},\ 
S_3\,=\,\frac{\Bs}{27}=\frac{2s_1^3-9s_1s_2+27s_3}{27}.
\]
The polynomials $f_3(0,S_2,S_3;X)$ and $f_3(\bs;X)$ are Tschirnhausen 
equivalent over $k(\mathbf{s})$. 
Moreover, $f_3(\mathbf{S};X)$ generates the field extension 
$k(\mathbf{X})/k(\mathbf{X})^{\frak{S}_3}$. 
After specializing parameters $\bs\mapsto \ba=(a_1,a_2,a_3)\in M^3$, the polynomials 
$f_3(\ba;X)=X^3-a_1X^2+a_2X-a_3$ and 
$f_3(0,A_2,A_3;X)=X^3+A_2X-A_3$ are Tschirnhausen equivalent 
over $M$ where $A_2:=-\Aa/3, A_3:=\Ba/27$. 
Put $\mathbf{T}:=(0,T_2,T_3)$. 
Then we have 
\begin{align*}
&D_\mathbf{S}=\mathrm{Disc}_Xf_3(0,S_2,S_3;X)=-4S_2^3-27S_3^2,\\
&A_\mathbf{S}^3B_\mathbf{T}^2-27A_\mathbf{T}^3D_\mathbf{S}
=-729(4S_2^3T_2^3+27S_3^2T_2^3+27S_2^3T_3^2). 
\end{align*}
We also obtain 
\begin{align}
F_0(\mathbf{S},\mathbf{T};X)=X^6&-\frac{8S_2^3T_2}{D_\mathbf{S}}X^4
+\frac{8S_2^3T_3}{D_\mathbf{S}}X^3\nonumber\\
&+\frac{16S_2^6T_2^2}{D_\mathbf{S}^2}X^2-\frac{32S_2^6T_2T_3}{D_\mathbf{S}^2}X
+\frac{64S_2^6(S_3^2T_2^3 - S_2^3T_3^2)}{D_\mathbf{S}^3},\nonumber\\
F_1(\mathbf{S},\mathbf{T};X)=X^6&+\frac{6S_2^2T_2}{D_\mathbf{S}}X^4
+\frac{27S_3T_3}{D_\mathbf{S}}X^3+ \frac{9S_2^4T_2^2}{D_\mathbf{S}^2}X^2
+\frac{81S_2^2S_3T_2T_3}{D_\mathbf{S}^2}X\label{polyF1ST}\\
&+\frac{4S_2^6T_2^3 + 108S_2^3S_3^2T_2^3 + 729S_3^4T_2^3 + 27S_2^6T_3^2}{D_\mathbf{S}^3},
\nonumber\\
F_2(\mathbf{S},\mathbf{T};X)=X^6&-\frac{18S_2T_2}{D_\mathbf{S}}X^4
+ \frac{27T_3}{D_\mathbf{S}}X^3\nonumber\\
&+\frac{81S_2^2T_2^2}{D_\mathbf{S}^2}X^2 - \frac{243S_2T_2T_3}{D_\mathbf{S}^2}X
+ \frac{729(S_3^2 T_2^3 - S_2^3 T_3^2)}{D_\mathbf{S}^3}.\nonumber
\end{align}
Put $F_2^0(S_2,S_3,T_2,T_3;X):=F_2(\mathbf{S},\mathbf{T};X)$. 
Then we have 
\begin{theorem}
For $(A_2,A_3), (B_2,B_3)\in M^2$ with $4A_2^3B_2^3+27A_3^2B_2^3+27A_2^3B_3^2\neq 0$, 
the decomposition type of irreducible factors $h_{\mu}(X)$ of 
$F_2^0(A_2,A_3,B_2,B_3;X)$ over $M$ gives an answer to the subfield 
problem of $X^3+S_2X-S_3$ as on Table $1$. 
\end{theorem}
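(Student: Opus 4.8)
The plan is to deduce the statement directly from Theorem~\ref{th}. For $(A_2,A_3),(B_2,B_3)\in M^2$ put $\ba:=(0,A_2,A_3)$ and $\bb:=(0,B_2,B_3)$ in $M^3$, so that $f_3(\ba;X)=X^3+A_2X-A_3$ and $f_3(\bb;X)=X^3+B_2X-B_3$, and keep the standing separability assumption $\Da\Db\neq0$ of this section. By construction $F_2^0(A_2,A_3,B_2,B_3;X)$ is the polynomial $F_2(\bs,\bt;X)$ of (\ref{polyF2}) evaluated at $\bs=(0,A_2,A_3)$, $\bt=(0,B_2,B_3)$; since $\Da\neq0$ no denominator vanishes there, so this equals the specialized resolvent $F_2(\ba,\bb;X)=\prod_{\overline{g}\in H\backslash\Gst}(X-c_2^g)\in M[X]$. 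Hence it suffices to check the hypotheses of Theorem~\ref{th} for this pair; the conclusion of that theorem, read from Table~$1$, is then exactly the assertion (interchanging $\ba$ and $\bb$ if needed so that $\#G_\ba\geq\#G_\bb$).

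The substitution to perform is $\Aa=-3A_2$, $\Ba=27A_3$, $\Da=-4A_2^3-27A_3^2=D_\mathbf{S}$, and likewise with $\bb$ for $\ba$. Under it, the identity $A_\mathbf{S}^3B_\mathbf{T}^2-27A_\mathbf{T}^3D_\mathbf{S}=-729(4S_2^3T_2^3+27S_3^2T_2^3+27S_2^3T_3^2)$ displayed above shows that the hypothesis $4A_2^3B_2^3+27A_3^2B_2^3+27A_2^3B_3^2\neq0$ is equivalent to the condition $\Aa^3\Bb^2-27\Ab^3\Da\neq0$ appearing in Theorem~\ref{th}; by (\ref{eqD}) this also says exactly that $F_2(\ba,\bb;X)$ has no multiple root (using $\Ba\neq0$ and $\Db\neq0$).

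It remains to match the two standing assumptions of Theorem~\ref{th}, namely irreducibility of $f_3(\ba;X)$ and $\Aa\Ab\neq0$. Irreducibility is implicit, since Table~$1$ lists only $G_\ba\in\{\frak{S}_3,C_3\}$; combined with Lemma~\ref{lemA}(i) it also furnishes $\Ba\neq0$, used above. The genuinely extra condition is $\Aa\Ab\neq0$, i.e.\ $A_2B_2\neq0$. When $A_2=0$ — the case $B_2=0$ being symmetric — Lemma~\ref{lemA}(iii) lets us replace $f_3(\ba;X)$ by the Tschirnhausen-equivalent $Y^3-3Y-(\Ba+\Ba^{-1})$, still of the form $X^3+S_2X-S_3$ but now with $S_2=-3\neq0$: this preserves the splitting fields, hence the whole subfield configuration, and by Proposition~\ref{prop12} and Proposition~\ref{propc}(ii) the degrees $[M(c_2^g):M]$, and so the factorization type of the resolvent over $M$, depend only on $L_\ba$, $L_\bb$ and $\Gab$; hence the type is unchanged and Theorem~\ref{th} applies to the modified pair. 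The one delicate point — and thus the main obstacle — is exactly this bookkeeping on the locus $A_2B_2=0$: one must confirm the reduction is compatible with the explicit shape of $F_2^0$ and, should the reduced pair happen to have a multiple root of its resolvent, fall back on Theorem~\ref{thiso} to exhibit a linear factor of the original $F_2^0$ and then recover the type by the degree argument in the proof of Theorem~\ref{th}. Apart from this edge case, the proof is the substitution carried out above together with a direct appeal to Theorem~\ref{th}.
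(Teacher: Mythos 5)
Your proposal is correct and follows the route the paper intends: the theorem is stated without proof precisely because it is the specialization $\bs=(0,S_2,S_3)$, $\bt=(0,T_2,T_3)$ of Theorem \ref{th}, and your substitution $\Aa=-3A_2$, $\Ba=27A_3$, $\Da=-4A_2^3-27A_3^2$, which converts the displayed identity for $A_\mathbf{S}^3B_\mathbf{T}^2-27A_\mathbf{T}^3D_\mathbf{S}$ into the stated hypothesis, is exactly the intended reduction. You also correctly flag the only point the paper glosses over, namely that Theorem \ref{th} carries the extra hypothesis $\Aa\Ab\neq 0$, which here means $A_2B_2\neq 0$ and is \emph{not} implied by the stated condition (e.g.\ $X^3-A_3$ with $A_2=0$ is admissible). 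Your repair via Lemma \ref{lemA}(iii) works but is more roundabout than necessary, and it forces you into the awkward contingency about the modified pair possibly acquiring a multiple root. The shorter observation is that $\Aa\Ab\neq 0$ enters the proof of Theorem \ref{th} only through the separability of the resolvent: by (\ref{eqD}) the discriminant of $F_2(\ba,\bb;X)$ is a unit multiple of $\Ba^6\Db^3(\Aa^3\Bb^2-27\Ab^3\Da)^2$, and all three factors are already nonzero under your standing assumptions ($\Ba\neq 0$ from irreducibility via Lemma \ref{lemA}(i), $\Db\neq 0$ from separability, and the third factor is the stated hypothesis). Hence Proposition \ref{prop12}, Proposition \ref{propsplit} and Theorem \ref{throotf} (equivalently, case (2) of Theorem \ref{thiso}, whose proof likewise needs only $\Ba(\Aa^3\Bb^2-27\Ab^3\Da)\neq 0$) apply directly to the pair $\ba=(0,A_2,A_3)$, $\bb=(0,B_2,B_3)$, and the case analysis in the proof of Theorem \ref{th} goes through verbatim without any Tschirnhausen detour. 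With either version of the edge-case argument the proof is complete.
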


\subsection{Special case $2 : X^3+sX+s$.}
Assume that char $k\neq 3$. 
Define $A_2:=-\Aa/3, A_3:=\Ba/27$ as in the previous subsection. 
For $\ba=(a_1,a_2,a_3)\in M^3$ with $\Aa\neq 0$ and $\Ba\neq 0$, 
the polynomials $f_3(\ba;X)$ and $f_3(0,a,-a;X)=X^3+aX+a$ are 
Tschirnhausen equivalent over $M$, where 
\begin{align*}
a:=\frac{A_2^3}{A_3^2}=-\frac{27\Aa^3}{\Ba^2}
=-\frac{27(a_1^2-3a_2)^3}{(2a_1^3-9a_1a_2+27a_3)^2}. 
\end{align*}
This follows from the equality 
\[
X^3+A_2X-A_3=-\frac{A_3^3}{A_2^3}\Bigl(\Bigl(-\frac{A_2X}{A_3}\Bigr)^3
+a\Bigl(-\frac{A_2X}{A_3}\Bigr)+a\Bigr). 
\]

We take $\ba=(0,a,-a)\in M^3$ and $\bb=(0,b,-b)\in M^3$. 
Then
\begin{align*}
&\Da=\mathrm{Disc}_Xf_3(0,a,-a;X)=-a^2(4a+27),\\
&A_\ba^3B_\bb^2-27A_\bb^3D_\ba=-729a^2b^2(4ab+27a+27b). 
\end{align*}

\begin{remark}
It may be notable that the Tschirnhausen equivalence in the previous two subsections is 
affinely obtained; that is, it is given by linear forms. 
\end{remark}
By Theorem \ref{thiso}, we see that if $4ab+27a+27b=0$ for $a,b\in M$ then 
$X^3+aX+a$ and that $X^3+bX+b$ are Tschirnhausen equivalent 
over $M$. 
Hence 
\[
X^3+aX+a\quad \mathrm{and}\quad X^3-\frac{27a}{4a+27}X-\frac{27a}{4a+27}
\]
have the same splitting field over $M$. 
\begin{example}
For $M=\mathbb{Q}$, we have 
\begin{align*}
\mathrm{Spl}_\mathbb{Q} (X^3-189X-189)\,&=\,\mathrm{Spl}_\mathbb{Q} (X^3-7X-7),\\
\mathrm{Spl}_\mathbb{Q} (X^3-27X-27)\,&=\,\mathrm{Spl}_\mathbb{Q} (X^3-9X-9),\\
\mathrm{Spl}_\mathbb{Q} (X^3-6X-6)\,&=\,\mathrm{Spl}_\mathbb{Q} (X^3+54X+54). 
\end{align*}
\end{example}
In our special case, we have 
\begin{align*}
F_0(0,s,-s,0,t,-t;X)&=X^6-\frac{8s^3t}{\Ds}X^4-\frac{8s^3t}{\Ds}X^3\\
&\hspace*{11mm}
+\frac{16s^6t^2}{\Ds^2}X^2+\frac{32s^6t^2}{\Ds^2}X-\frac{64s^8t^2(s - t)}{\Ds^3},\\
F_1(0,s,-s,0,t,-t;X)&=X^6+\frac{6s^2t}{\Ds}X^4+\frac{27st}{\Ds}X^3+\frac{9s^4t^2}{\Ds^2}X^2\\
&\hspace*{11mm}+\frac{81s^3t^2}{\Ds^2}X
+\frac{s^4t^2(27s^2 + 729t + 108st + 4s^2t)}{\Ds^3},\\
F_2(0,s,-s,0,t,-t;X)&=X^6-\frac{18st}{D_\bs}X^4-\frac{27t}{D_\bs}X^3\\
&\hspace*{11mm}
+\frac{81s^2t^2}{\Ds^2}X^2+\frac{243st^2}{\Ds^2}X-\frac{729s^2t^2(s-t)}{\Ds^3}
\end{align*}
where $\Ds=-s^2(4s+27)$. 
We put $G_2(s,t;X):=F_2(0,s,-s,0,t,-t;X)$, and have the following theorem. 
\begin{theorem}
For $a,b\in M$ with $4ab+27a+27b\neq 0$, the decomposition type of irreducible factors 
$h_{\mu}(X)$ of $G_2(a,b;X)$ over $M$ gives an answer to the subfield problem of 
$X^3+sX+s$ as on Table $1$. 
\end{theorem}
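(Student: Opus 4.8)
The plan is to obtain the theorem as a special case of Theorem \ref{th} applied to the parameters $\ba=(0,a,-a)\in M^3$ and $\bb=(0,b,-b)\in M^3$. For this choice one has $f_3(\ba;X)=X^3+aX+a$ and $f_3(\bb;X)=X^3+bX+b$, and $G_2(a,b;X)=F_2(0,a,-a,0,b,-b;X)$ is by definition the specialization $F_2(\ba,\bb;X)$ of the sextic (\ref{polyF2}). First I would record that this specialization is legitimate: the coefficients of $F_2(\bs,\bt;X)$ are rational functions in $\bs,\bt$ whose denominators are powers of $\Ds$, and the standing hypotheses of the section force $\Da\cdot\Db\neq0$, where $\Da=-a^2(4a+27)$ and $\Db=-b^2(4b+27)$; hence $(\bs,\bt)\mapsto(\ba,\bb)$ is defined on every coefficient and $G_2(a,b;X)=\prod_{\overline{g}\in H\backslash\Gst}(X-c_2^g)\in M[X]$ with $c_2^g=u_2^g(\bm{\alpha},\bm{\beta})$ as in Section \ref{seSpe}.

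Next I would check that the hypothesis $4ab+27a+27b\neq0$, together with the section's standing assumptions, translates into the hypotheses of Theorem \ref{th}. From (\ref{defstd}) one computes $\Aa=-3a$, $\Ba=-27a$, $\Ab=-3b$, $\Bb=-27b$. Irreducibility of $f_3(\ba;X)$ forces $\Ba\neq0$, i.e.\ $a\neq0$, by Lemma \ref{lemA}(i), while $\Db\neq0$ forces $b\neq0$; hence $\Aa\Ab=9ab\neq0$. Moreover, as already recorded in this subsection,
\[
\Aa^3\Bb^2-27\Ab^3\Da=-729\,a^2b^2(4ab+27a+27b),
\]
which is nonzero precisely because $a,b\neq0$ and $4ab+27a+27b\neq0$. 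Thus $f_3(\ba;X)$ is irreducible, $\Aa\Ab\neq0$ and $\Aa^3\Bb^2-27\Ab^3\Da\neq0$, so Theorem \ref{th} applies verbatim: the decomposition type of the irreducible factors $h_\mu(X)$ of $F_2(\ba,\bb;X)=G_2(a,b;X)$ over $M$ determines the relative position of $\mathrm{Spl}_M(X^3+aX+a)$ and $\mathrm{Spl}_M(X^3+bX+b)$ exactly as displayed in Table $1$, and over each root field $M_\mu$ of $h_\mu(X)$ the two splitting fields coincide. Since $\{X^3+aX+a\mid a\in M\}$ is precisely the family of specializations $\bs\mapsto(0,a,-a)$ of $f_3(\bs;X)$, this is an answer to the subfield problem of the generic polynomial $X^3+sX+s$.

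I do not anticipate a genuine obstacle: the statement is a transparent specialization of the already-established Theorem \ref{th}, and the only delicate point is the bookkeeping that no denominator degenerates under $(\bs,\bt)\mapsto(\ba,\bb)$ — equivalently, that $G_2(a,b;X)$ genuinely has $n!=6$ distinct roots and that Proposition \ref{prop12} remains applicable. This is guaranteed by the irreducibility of $f_3(\ba;X)$ (giving $\Ba\neq0$), by $\Da\Db\neq0$, and by $\Aa^3\Bb^2-27\Ab^3\Da\neq0$, inspected through the discriminant formula (\ref{eqD}).
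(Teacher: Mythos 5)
Your proposal is correct and is exactly the derivation the paper intends: the theorem is stated without a separate proof precisely because it is the specialization $\ba=(0,a,-a)$, $\bb=(0,b,-b)$ of Theorem \ref{th}, and your verification that the standing assumptions ($\Da\Db\neq 0$, irreducibility of $f_3(\ba;X)$) together with $4ab+27a+27b\neq 0$ yield $\Aa\Ab=9ab\neq 0$ and $\Aa^3\Bb^2-27\Ab^3\Da=-729a^2b^2(4ab+27a+27b)\neq 0$ matches the identities the paper records in that subsection.
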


In the paper \cite{HM07}, we gave an answer to the field isomorphism problem of 
$X^3+sX+s$ in the case of char $k\neq 3$. 
Here we present a slightly modified version of the result in \cite{HM07} 
(cf. Theorem 1 and Theorem 7 in \cite{HM07}). 
First we note that if $G_2(a,b;X)$ has a root zero, i.e. $G_2(a,b;0)=0$, then $ab(a-b)=0$. 
Assume that $a\neq b$. 
Then we have $c_2^g\neq 0$ for $\overline{g}\in H\backslash G_{\bs,\bt}$. 
Put $u:=3c_1/c_2$. 
Then we obtain $(c_0,c_1)=(2ac_2/3, uc_2/3)$ and 
\begin{align*}
c_2=\frac{3(u^2+9u-3a)}{u^3-2au^2-9au-2a^2-27a}.
\end{align*}
Under the condition $a(4a+27)\neq 0$, we see that $u^3-2au^2-9au-2a^2-27a\neq 0$. 
Hence we have $M(c_0,c_1,c_2)=M(u)$. 
From the direct computation, we obtain $(a-b)\cdot 
\prod_{\overline{g}\in H\backslash G_{\bs,\bt}}(X-u^g)=:H(a,b;X)$ 
where 
\[
H(a,b;X)=a(X^2+9X-3a)^3-b(X^3-2aX^2-9aX-2a^2-27a)^2.
\]
Note that the polynomial $H(s,t;X)\in k(s,t)[X]$ is $k$-generic for $S_3\times S_3$. 
We also see $\mathrm{Disc}_X H(a,b;X)=a^{10}b^4(4a+27)^{15}(4b+27)^3$.
\begin{theorem}\label{thS3}
For $a,b\in M$ with $a\neq b$, the decomposition type 
of irreducible factors $h_{\mu}(X)$ of $H(a,b;X)$ over $M$ gives 
an answer to the subfield problem of $X^3+sX+s$ as on Table $1$. 
In particular, two splitting fields of $X^3+aX+a$ and of $X^3+bX+b$ 
over $M$ coincide if and only if there exists $u\in M$ such that 
\[
b=\frac{a(u^2+9u-3a)^3}{(u^3-2au^2-9au-2a^2-27a)^2}.
\]
\end{theorem}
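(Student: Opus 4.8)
The plan is to transport the whole machinery of Section \ref{seSpe} and of the proof of Theorem \ref{th} from the Tschirnhausen coefficient $u_2$ to the quantity $u=3u_1/u_2$, whose $\frak{S}_3\times\frak{S}_3$-conjugates are the roots of $H(a,b;X)$. The point that makes this worthwhile is that, unlike $F_2(\ba,\bb;X)$, the polynomial $H(a,b;X)$ never acquires a multiple root: one has $\mathrm{Disc}_X H(a,b;X)=a^{10}b^4(4a+27)^{15}(4b+27)^3$, while $\Da\Db=a^2b^2(4a+27)(4b+27)$ up to sign, so the separability of $f_3(\ba;X)$ and $f_3(\bb;X)$ — which is a standing assumption — already forces $\mathrm{Disc}_X H(a,b;X)\neq0$, i.e. the six roots $u^g$ stay distinct for every admissible $a\neq b$.

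At the generic level I would first check that $\mathrm{Stab}_{\Gst}(u)=H$. The inclusion $H\subseteq\mathrm{Stab}_{\Gst}(u)$ is immediate from Lemma \ref{stabil}, and since $\mathrm{Disc}_X H(\bs,\bt;X)=s^{10}t^4(4s+27)^{15}(4t+27)^3\neq0$ in $k(\bs,\bt)$ (here $\mathrm{char}\,k\neq3$ enters), the $6=[\Gst:H]$ conjugates $u^g$, $\overline g\in H\backslash\Gst$, are pairwise distinct, which forces equality. Consequently $K(u^g)=(L_\bs L_\bt)^{g^{-1}Hg}=K(u_0^g,u_1^g,u_2^g)$ for every $g$, and by Proposition \ref{propLL} $L_\bs L_\bt=K(u^g\ |\ \overline g\in H\backslash\Gst)$; in particular $H(\bs,\bt;X)$ is $k$-generic for $\frak{S}_3\times\frak{S}_3$, and Proposition \ref{prop-decom} applies with factors $H^+(X),H^-(X)$ in place of $F_2^+(X),F_2^-(X)$.

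Next I would carry out the specialization $(\bs,\bt)\mapsto(\ba,\bb)$ with $\ba=(0,a,-a)$, $\bb=(0,b,-b)$, $\Da\Db\neq0$ and $a\neq b$. Then $a,b,4a+27,4b+27$ are all nonzero, so $H(a,b;X)$ has six distinct roots $u^g$, and $c_2^g\neq0$ (no linear Tschirnhausen transformation from $f_3(\ba;X)$ to $f_3(\bb;X)$ exists when $a\neq b$), so $u^g=3c_1^g/c_2^g$ is well defined. The congruence $X^3-2aX^2-9aX-2a^2-27a\equiv(4a+27)(3X-2a)\pmod{X^2+9X-3a}$ shows that $X^3-2aX^2-9aX-2a^2-27a$ and $X^2+9X-3a$ are coprime when $a(4a+27)\neq0$, hence $(u^g)^3-2a(u^g)^2-9au^g-2a^2-27a\neq0$; then $c_2^g=3\bigl((u^g)^2+9u^g-3a\bigr)/\bigl((u^g)^3-2a(u^g)^2-9au^g-2a^2-27a\bigr)$, together with $c_1^g=u^gc_2^g/3$ and $c_0^g=2ac_2^g/3$, gives $M(c_0^g,c_1^g,c_2^g)=M(u^g)$. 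Because $\#\{u^g\}=6$, the argument of Proposition \ref{prop12} then runs verbatim: $\mathrm{Stab}_{\Gab}(u^g)=g^{-1}Hg\cap\Gab$, and, using Proposition \ref{propc}, $L_\ba L_\bb=L_\ba\,M(u^g)=L_\bb\,M(u^g)=M(u^g\ |\ \overline g\in H\backslash\Gst)$.

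With these in hand the conclusions follow exactly as in Section \ref{seSpe}. The analogue of Theorem \ref{throotf} and Corollary \ref{cor1} reads: $H(a,b;X)$ has a root in $M$ $\iff$ $M[X]/(f_3(\ba;X))$ and $M[X]/(f_3(\bb;X))$ are $M$-isomorphic $\iff$ (all subgroups of index $3$ in $\frak{S}_3$ and in $C_3$ being conjugate) $\mathrm{Spl}_M f_3(\ba;X)=\mathrm{Spl}_M f_3(\bb;X)$; and $u\in M$ with $H(a,b;u)=0$ is precisely a solution of $b=a(u^2+9u-3a)^3/(u^3-2au^2-9au-2a^2-27a)^2$, which is the final assertion. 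For Table $1$ one repeats the case analysis in the proof of Theorem \ref{th} with $H(a,b;X)$, $u^g$, $H^{\pm}(X)$ replacing $F_2(\ba,\bb;X)$, $c_2^g$, $F_2^{\pm}(X)$; its only inputs are the relations just established, Proposition \ref{propsplit} (splitting over $M$ when $\Dela/\Delb\in M$), the isomorphism criterion, and Proposition \ref{prop12}(ii), all now available for $H$. In the remaining case $4ab+27a+27b=0$ — where $F_2(\ba,\bb;X)$ itself degenerates — Theorem \ref{thiso}(1) gives $\mathrm{Spl}_M f_3(\ba;X)=\mathrm{Spl}_M f_3(\bb;X)$; here $H(a,b;X)$ still has six distinct roots, has a root in $M$, splits into two cubics over $M$ (since $\Dela/\Delb\in M$), and $M(u^g\ |\ \overline g)=L_\ba$ has degree $6$ or $3$ over $M$ according as $G_\ba\cong\frak{S}_3$ or $C_3$, which forces the decomposition type $(1)(2)(3)$, respectively $(1)(1)(1)(3)$, as in Table $1$. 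The hard part is just the two concrete verifications underpinning all of the above — that $\mathrm{Disc}_X H(\bs,\bt;X)\neq0$, so that $\mathrm{Stab}_{\Gst}(u)=H$ and everything in Section \ref{seTschirn} transfers, and the coprimality congruence, so that $M(c_0^g,c_1^g,c_2^g)=M(u^g)$ after specialization; the rest is transcription.
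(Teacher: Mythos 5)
Your proposal is correct and follows essentially the same route as the paper: the paper likewise passes from $c_2$ to $u=3c_1/c_2$ via $(c_0,c_1)=(2ac_2/3,uc_2/3)$ and $c_2=3(u^2+9u-3a)/(u^3-2au^2-9au-2a^2-27a)$, notes $M(c_0,c_1,c_2)=M(u)$ and $\mathrm{Disc}_X H(a,b;X)=a^{10}b^4(4a+27)^{15}(4b+27)^3\neq 0$, and then lets the machinery of Section 3 and the case analysis of Theorem \ref{th} carry over verbatim. The only difference is that you supply explicit justifications (the coprimality congruence for the nonvanishing of the denominator, and the separate treatment of $4ab+27a+27b=0$) for steps the paper asserts without detail.
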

By applying Hilbert's irreducibility theorem and Siegel's theorem for curves of genus $0$ 
to Theorem \ref{thS3} respectively, we obtain the following corollaries: 
\begin{corollary}\label{Hil1}
If $M$ is a Hilbertian field then for a fixed $a\in M$ there exist infinitely many 
$b\in M$ such that $\mathrm{Spl}_M (X^3+aX+a)=\mathrm{Spl}_M (X^3+bX+b)$. 
\end{corollary}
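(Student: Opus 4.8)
The plan is to read the corollary off the explicit parametrization in the final assertion of Theorem \ref{thS3}. Put
\[
\phi(u)\ :=\ \frac{a(u^2+9u-3a)^3}{(u^3-2au^2-9au-2a^2-27a)^2}\ \in\ M(u),
\]
and recall that, as throughout Section \ref{seCubic}, $X^3+aX+a$ is taken to be separable, so that $a\neq 0$ and $\Da=-a^2(4a+27)\neq 0$. By the last assertion of Theorem \ref{thS3}, for any $u\in M$ at which the denominator of $\phi$ does not vanish and for which $\phi(u)\neq a$, the element $b:=\phi(u)\in M$ satisfies $\mathrm{Spl}_M(X^3+aX+a)=\mathrm{Spl}_M(X^3+bX+b)$. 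Hence the whole problem reduces to showing that $\phi$ attains infinitely many values as $u$ runs over $M$.

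The point to check is that $\phi$ is non-constant. Both numerator and denominator have degree $6$, with leading coefficients $a$ and $1$, so a constant $\phi$ would equal $a$ and would give the polynomial identity $(u^2+9u-3a)^3=(u^3-2au^2-9au-2a^2-27a)^2$ in $M[u]$. Factoring over an algebraic closure, write $u^2+9u-3a=\prod_i(u-p_i)^{a_i}$ with $\sum_i a_i=2$; the left side is then $\prod_i(u-p_i)^{3a_i}$, and since the right side is a square each multiplicity $3a_i$ is even, hence each $a_i$ is even, and so $u^2+9u-3a=(u-p)^2$ for some $p$. Comparing coefficients, this forces $\mathrm{char}\,M\neq 2$ and $4a+27=0$, i.e. $\Da=0$, contradicting separability of $X^3+aX+a$. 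So $\phi$ is non-constant.

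Finally, a Hilbertian field is infinite, and a non-constant rational function $\phi\in M(u)$ takes infinitely many values on an infinite field $M$: otherwise $\phi(M)$ would be contained in a finite set $\{c_1,\ldots,c_r\}$, and then the infinitely many elements of $M$ would all lie among the finitely many poles of $\phi$ together with the solution sets of the equations $\phi(u)=c_i$, $1\le i\le r$, each of which is finite because $\phi$ is non-constant; this is absurd. This is the only point at which the Hilbertian hypothesis enters; it is the elementary instance of Hilbert's irreducibility theorem referred to just before the statement, or equivalently the fact that the rational (genus-$0$) curve $b=\phi(u)$ has infinitely many $M$-rational points with pairwise distinct $b$-coordinates. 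Discarding the value $b=a$ and the at most two values $b\in\{0,-27/4\}$ for which $X^3+bX+b$ is inseparable, infinitely many admissible $b\in M$ remain, and for each of them Theorem \ref{thS3} gives $\mathrm{Spl}_M(X^3+aX+a)=\mathrm{Spl}_M(X^3+bX+b)$, which proves the corollary.

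I regard the non-constancy of $\phi$ as the only real step. It is a short computation, but a genuine one, and it essentially uses separability of $X^3+aX+a$: for $a=-27/4$ in characteristic $\neq 2$ the map $\phi$ degenerates to the constant $-27/4$, so without that hypothesis the argument (and, read naively, the conclusion) would break down.
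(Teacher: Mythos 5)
Your proof is correct, and it is more explicit than what the paper offers: the paper gives no argument for Corollary \ref{Hil1} beyond the sentence ``by applying Hilbert's irreducibility theorem to Theorem \ref{thS3}'', whereas you actually carry out the specialization. The two routes agree in substance --- both read the corollary off the parametrization $b=\phi(u)$ in Theorem \ref{thS3} --- but your version isolates the one genuine verification (non-constancy of $\phi$) and makes the pleasant observation that the full Hilbertian hypothesis is never used: only the infinitude of $M$ enters, so your argument in fact proves a slightly stronger statement than the one the authors' citation of Hilbert irreducibility suggests. (By contrast, the companion Corollary \ref{Siegel1} really does need Siegel's theorem.) Your multiplicity argument for non-constancy is sound; the only wrinkle is the sentence ``comparing coefficients, this forces $\mathrm{char}\,M\neq 2$ and $4a+27=0$'', which fails literally in characteristic $3$, where $u^2+9u-3a=u^2$ is already a perfect square and the coefficient comparison yields no condition on $a$. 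This is harmless because Theorem \ref{thS3} and all of Subsection 4.3 carry the standing hypothesis $\mathrm{char}\,k\neq 3$, which the corollary inherits, but you should say so explicitly since you invoke the separability assumption of Section \ref{seCubic} and ought equally to invoke this one. Your handling of the boundary values (discarding $b=a$ and the inseparable values $b\in\{0,-27/4\}$, and noting the degeneration of $\phi$ to a constant when $4a+27=0$) is careful and correct.
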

\begin{corollary}\label{Siegel1}
Let $M$ be a number field and $\mathcal{O}_M$ the ring of integers in $M$. 
For a given integer $a\in \mathcal{O}_M$, there exist only finitely many integers 
$b\in\mathcal{O}_M$ such that $\mathrm{Spl}_{M} (X^3+aX+a)=\mathrm{Spl}_M (X^3+bX+b)$. 
\end{corollary}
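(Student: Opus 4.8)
The plan is to translate the assertion, via Theorem~\ref{thS3}, into the finiteness of the set of $S$-integral points on a rational curve, and then to invoke Siegel's theorem in its genus-$0$ form.

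First I would dispose of the degenerate cases and fix notation. One may assume $a(4a+27)\neq 0$, since otherwise $X^3+aX+a$ is the degenerate polynomial $X^3$ or is inseparable, and there is nothing to prove in the present setting. Set $h(u):=u^2+9u-3a$, $g(u):=u^3-2au^2-9au-2a^2-27a$ and $\phi(u):=a\,h(u)^3/g(u)^2\in M(u)$, so that the ``in particular'' part of Theorem~\ref{thS3} reads: for $b\in M$ with $b\neq a$, one has $\mathrm{Spl}_M(X^3+aX+a)=\mathrm{Spl}_M(X^3+bX+b)$ if and only if $b=\phi(u)$ for some $u\in M$. Hence every integer $b\in\mathcal{O}_M$ counted in the corollary, with the single exception $b=a$, equals $\phi(u_b)$ for some $u_b\in M$; since $b\mapsto u_b$ is injective, it suffices to show that $T:=\{\,u\in M:\phi(u)\in\mathcal{O}_M\,\}$ is finite.

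Next I would record the geometry of $\phi$, regarded as a nonconstant morphism $\mathbb{P}^1\to\mathbb{P}^1$ over $M$ with the source being the $u$-line, and in particular locate its poles. Two short computations suffice: first, $\mathrm{Disc}_u\,g(u)=-a^2(4a+27)^3$, which is nonzero by assumption, so $g$ has three distinct roots $r_1,r_2,r_3$ in $\overline{M}$; second, $g(u)\equiv(4a+27)(3u-2a)\pmod{h(u)}$ and $h(2a/3)=a(4a+27)/9\neq0$, whence $\gcd(g,h)=1$, so no $r_i$ is a root of $h$. Since moreover $a\,h^3$ and $g^2$ are both of degree $6$, with leading coefficients $a$ and $1$, we get $\phi(\infty)=a\in\mathcal{O}_M$, so $\infty$ is not a pole of $\phi$. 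Therefore the polar divisor of $\phi$ is $2(r_1)+2(r_2)+2(r_3)$; in particular $\phi$ has exactly three poles, counted without multiplicity.

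Finally I would apply Siegel's theorem for curves of genus~$0$ (\cite[Theorem~6.1]{Lan78}, \cite[Chapter~8, Section~5]{Lan83}, \cite[Theorem~D.8.4]{HS00}), taking $S$ to be the set of archimedean places of $M$ so that $\mathcal{O}_{M,S}=\mathcal{O}_M$: since the genus-$0$ curve $\mathbb{P}^1$ together with the nonconstant function $\phi$ has at least three distinct poles, the set $\{\,u\in\mathbb{P}^1(M):\phi(u)\in\mathcal{O}_M\,\}$ is finite, hence $T$ is finite, which proves the corollary. I expect the only delicate point to be the verification that $\phi$ always has at least three distinct poles — equivalently that $g$ is separable and prime to $h$ for every admissible $a$, which is precisely what the two computations recorded above supply — together with the careful matching of this with the hypothesis of the genus-$0$ form of Siegel's theorem; the Diophantine content itself may then be cited as a black box.
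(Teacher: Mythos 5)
Your proposal is correct and follows essentially the same route as the paper: both translate the claim via Theorem~\ref{thS3} into the finiteness of integral values of the rational function $b=a\,h(u)^3/g(u)^2$ and invoke Siegel's theorem for genus~$0$, the key point being that $\mathrm{Disc}_u\,g=-a^2(4a+27)^3\neq 0$ guarantees three distinct poles. Your write-up merely fills in details the paper leaves implicit (coprimality of $g$ and $h$, the value $\phi(\infty)=a$), which is a welcome but not substantively different elaboration.
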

\begin{proof}
We may apply Siegel's theorem (cf. \cite[Theorem 6.1]{Lan78}, 
\cite[Chapter 8, Section 5]{Lan83}, \cite[Theorem D.8.4]{HS00}) 
to Theorem \ref{thS3} because the discriminant of $u^3-2au^2-9au-2a^2-27a$ 
with respect to $u$ equals $-a^2(4a+27)^3$. 
\end{proof}
\begin{remark}
T. Komatsu \cite{Kom} treated a cubic generic polynomial $g(t,Y)=Y^3-t(Y+1)\in k(t)[Y]$ 
in the case of char $k\neq 2, 3$. 
He obtained a sextic polynomial $P(t_1,t_2;Z)$ satisfying $\mathrm{Spl}_{k(t_1,t_2)} 
P(t_1,t_2;Z)=\mathrm{Spl}_{k(t_1,t_2)}\, g(t_1,Y)\cdot \mathrm{Spl}_{k(t_1,t_2)}\, g(t_2,Y)$ 
via his descent Kummer theory (see also \cite{Kom04}). 
His paper \cite{Kom} treats the subfield problem of $g(t,Y)$ by using the sextic polynomial 
$P(t_1,t_2;Z)$. 
\end{remark}

\subsection{The case of char $k=3$}\label{subsech3}
In this subsection we study the case of char $k=3$. 
We have, in this case, 
\begin{align*}
\As=s_1^2,\quad \Bs=-s_1^3,\quad \Ds=s_1^2s_2^2-s_2^3-s_1^3s_3.
\end{align*}
By comparing the coefficients of (\ref{eqfg}) with respect to $X$, we obtain
\begin{align}
u_0&=\frac{s_2t_1^2-s_1^2t_2-s_2t_1(s_1^2-s_2)u_2-\Ds u_2^2}{s_1^2 t_1},\label{u0ch3}\\
u_1&=\frac{t_1-(s_1^2+s_2)u_2}{s_1}.\nonumber
\end{align}
We also get 
\begin{align*}
F_2(\bs,\bt;X)=X^6+\frac{s_1^2t_1^2}{\Ds}X^4-\frac{t_1^3}{\Ds}X^3
+\frac{s_1^4t_1^4}{\Ds^2}X^2+\frac{s_1^2t_1^5}{\Ds^2}X+\frac{t_1^6\Ds-s_1^6\Dt}{\Ds^3}. 
\end{align*}
The discriminant of $F_2(\bs,\bt;X)$ with respect to $X$ is given by 
\[
D_{\bs,\bt}\, =\, 
\frac{\Bs^6\Dt^3(\As^3\Bt^2-27\At^3\Ds)^2}{\Ds^{15}}
=\frac{s_1^{18}\Dt^3(s_1^6t_1^6)^2}{\Ds^{15}}
=\frac{s_1^{30}t_1^{12}\Dt^3}{\Ds^{15}}.
\]
Hence we have
\begin{theorem}
For $\ba=(a_1,a_2,a_3), \bb=(b_1,b_2,b_3)\in M^3$ with $a_1b_1\neq 0$, 
the decomposition type of irreducible factors $h_{\mu}(X)$ of 
$F_2(\ba,\bb;X)$ over $M$ gives an answer to the subfield problem of 
$X^3-s_1X^2+s_2X-s_3$ as on Table $1$. 
\end{theorem}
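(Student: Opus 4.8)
The plan is to run the proof of Theorem~\ref{th} in characteristic~$3$. The one feature that genuinely changes is that the condition ``$\Aa^3\Bb^2-27\Ab^3\Da\neq 0$'' imposed in Theorem~\ref{th} becomes, in char~$3$, the identity $\As^3\Bt^2-27\At^3\Ds=s_1^6t_1^6$, which never vanishes under our hypotheses; accordingly the discriminant of $F_2(\bs,\bt;X)$ specialises to $D_{\bs,\bt}=s_1^{30}t_1^{12}\Dt^3/\Ds^{15}$, nonzero as soon as $a_1b_1\Da\Db\neq 0$. Thus, under the standing hypotheses of Section~\ref{seCubic} ($\Da\Db\neq 0$; $f_3(\ba;X)$ irreducible over $M$, so $\Ga\cong\frak{S}_3$ or $C_3$; $\#\Ga\geq\#\Gb$) together with $a_1b_1\neq 0$---which in char~$3$, where $\As=s_1^2$ and $\Bs=-s_1^3$, is just $\Aa\Ab\neq 0$, equivalently $\Ba\Bb\neq 0$---the polynomial $F_2(\ba,\bb;X)$ has no multiple root, and there is no analogue of the degenerate case~(1) of Theorem~\ref{thiso} to treat.

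Granting this, I would argue exactly as for Theorem~\ref{th}. From the relations (\ref{u0ch3}), which express $u_0$ and $u_1$ as a quadratic and a linear polynomial in $u_2$ with $u_2$-free denominators $s_1^2t_1$ and $s_1$, one reads off $D_{0,2}^0(\bs,\bt)=s_1^2t_1$ and $D_{1,2}^0(\bs,\bt)=s_1$, both nonzero after specialisation; hence by Lemma~\ref{lemD0} and Proposition~\ref{prop12} we get $M(c_0^g,c_1^g,c_2^g)=M(c_2^g)$ for every $g\in\Gst$ and $L_\ba L_\bb=M(c_2^g\mid\overline g\in H\backslash\Gst)$, and by Proposition~\ref{propc} the polynomials $f_3(\ba;X)$, $f_3(\bb;X)$ have a common splitting field over the root field of every irreducible factor $h_\mu(X)$ of $F_2(\ba,\bb;X)$; since $F_2(\ba,\bb;X)$ has no multiple root, the degree of $h_\mu$ through a root $c_2^g$ equals $[M(c_2^g):M]=[\Gab:g^{-1}Hg\cap\Gab]$. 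The field-isomorphism statement---$\mathrm{Spl}_M f_3(\ba;X)=\mathrm{Spl}_M f_3(\bb;X)$ if and only if $F_2(\ba,\bb;X)$ has a root in $M$---follows from Theorem~\ref{throotf} and Corollary~\ref{cor1}, as all index-$3$ subgroups of $\frak{S}_3$ and of $C_3$ are conjugate. Running through Table~$1$ exactly as in the proof of Theorem~\ref{th}: Proposition~\ref{propsplit} (valid because $\mathrm{char}\,M=3\neq 2$) splits $F_2(\ba,\bb;X)=F_2^+(X)F_2^-(X)$ into two cubics whenever $\Delta_\ba/\Delta_\bb\in M$, which is the case exactly in the $(3)(3)$, $(1)(2)(3)$ and $(1)(1)(1)(3)$ rows; in the remaining rows the analogous degree count forces $[M(c_2^g):M]=6$, so $F_2(\ba,\bb;X)$ is irreducible; in a split row each cubic factor is irreducible when $L_\ba\neq L_\bb$ (because there $L_\ba L_\bb=L_\bb\,M(c_2^g)$ gives $[M(c_2^g):M]\geq 3$), whereas if $L_\ba=L_\bb$ the number of roots of $F_2(\ba,\bb;X)$ in $M$ equals $\#\mathrm{Aut}_M\bigl(M[X]/(f_3(\ba;X))\bigr)$, namely $1$ for $\Ga\cong\frak{S}_3$ and $3$ for $\Ga\cong C_3$, yielding the types $(1)(2)(3)$ and $(1)(1)(1)(3)$.

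The step deserving the most care is the first: one must check that in characteristic~$3$ the separability of $F_2(\ba,\bb;X)$, and the equalities $D_{0,2}^0=s_1^2t_1$, $D_{1,2}^0=s_1$, really follow from (\ref{u0ch3}) and the shapes $\As=s_1^2$, $\Bs=-s_1^3$, $\Ds=s_1^2s_2^2-s_2^3-s_1^3s_3$, so that $a_1b_1\neq 0$ together with $\Da\Db\neq 0$ is exactly what is needed. This is a short direct computation rather than a real obstacle: everything downstream is the case analysis of Theorem~\ref{th}, which relies only on characteristic-free results and on Proposition~\ref{propsplit} in the case $\mathrm{char}\neq 2$.
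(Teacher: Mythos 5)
Your proposal is correct and matches the paper's (largely implicit) argument: the paper proves this theorem simply by computing $D_{\bs,\bt}=s_1^{30}t_1^{12}\Dt^3/\Ds^{15}$ in characteristic $3$, so that $a_1b_1\neq 0$ (with the standing assumption $\Da\Db\neq 0$) guarantees $F_2(\ba,\bb;X)$ has no multiple root, and then lets the case analysis of Theorem \ref{th} carry over verbatim. Your additional checks (that $\As^3\Bt^2-27\At^3\Ds=s_1^6t_1^6$, and that the denominators in (\ref{u0ch3}) are powers of $s_1$ and $t_1$) are exactly the right verifications and agree with the text.
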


Next we treat the case of $a_1=0$. 
Without loss of generality we may assume that $a_1=0, b_1=0$ 
because the polynomial $f_3(0,s,-s)=X^3+sX+s$ is $k$-generic 
for $\mathfrak{S}_3$. 
Actually for $f_3(\bs;X)=X^3-s_1X^2+s_2X-s_3=0$, we take 
\[
Y=\frac{s_1^2}{-s_2-s_1X}
\]
and have
\[
Y^3+\frac{-s_1^6}{s_1^2s_2^2-s_2^3-s_1^3s_3}Y+\frac{-s_1^6}{s_1^2s_2^2-s_2^3-s_1^3s_3}=0. 
\]
Hence for $\ba=(a_1,a_2,a_3)\in M^3$ with $a_1\cdot \Da\neq 0$, 
the polynomials $f_3(\ba;X)$ and 
\[
X^3+\frac{-a_1^6}{a_1^2a_2^2-a_2^3-a_1^3a_3}X+\frac{-a_1^6}{a_1^2a_2^2-a_2^3-a_1^3a_3}
\]
are Tschirnhausen equivalent over $M$. 

After specializing parameters $(s_1,t_1)=(0,0)$ and then 
by comparing the coefficients of (\ref{eqfg}) with respect to $X$, 
we obtain
\begin{align*}
u_1=\frac{s_2(t_3-t_2u_0-u_0^3)}{s_3t_2},\quad u_2=0.
\end{align*}
Since the equalities (\ref{polyF1ST}) are also valid for char $k=3$, we have 
\begin{align*}
F_0(0,s_2,s_3,0,t_2,t_3;X)\,&=\,X^6-t_2X^4+t_3X^3+t_2^2X^2+t_2t_3X
+\frac{s_2^3t_3^2-s_3^2t_2^3}{s_2^3},\\
F_1(0,s_2,s_3,0,t_2,t_3;X)\,&=\,\Bigl(X^2-\frac{t_2}{s_2}\Bigr)^3,\\
F_2(0,s_2,s_3,0,t_2,t_3;X)\,&=\,X^6.
\end{align*}
We take $f_3(0,s,-s;X)=X^3+sX+s$. 
Then $\mathrm{Disc}_Xf_3(0,s,-s;X)=-s^3$. 
Define 
\begin{align*}
G_0(s,t;X)&:=F_0(0,s,-s,0,t,-t)\\
&\ =X^6-tX^4-tX^3+t^2X^2-t^2X+\frac{t^2(s-t)}{s}. 
\end{align*}
We see that the discriminant of $G_0(s,t;X)$ with respect to $X$ is equal to $t^{15}/s^3$. 

\begin{proposition}
The polynomial $G_0(s,t;X)$ is $k$-generic for 
$\mathfrak{S}_3\times \mathfrak{S}_3$. 
\end{proposition}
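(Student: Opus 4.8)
The plan is to obtain the proposition as an immediate instance of Theorem \ref{thgen}. I would take $n=3$, $H_1=H_2=\mathfrak{S}_3$, one parameter on each side ($l=m=1$) with $q_1=s$ and $r_1=t$, so that $M=k(s,t)$, together with the specializations $\ba=(0,s,-s)\in k(s)^3$ and $\bb=(0,t,-t)\in k(t)^3$; with the index $j=0$ one has, by construction, $F_0(\ba,\bb;X)=G_0(s,t;X)$.

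The first step is to check the hypotheses on the inputs $f_3(\ba;X)$ and $f_3(\bb;X)$. As recalled in this subsection, $f_3(0,s,-s;X)=X^3+sX+s$ is $k$-generic for $\mathfrak{S}_3$ over a field of characteristic $3$, and the same holds for $f_3(0,t,-t;X)=X^3+tX+t$. Moreover $\mathrm{Disc}_X f_3(0,s,-s;X)=-s^3\neq 0$ in $k(s)$ and $\mathrm{Disc}_X f_3(0,t,-t;X)=-t^3\neq 0$ in $k(t)$, so the standing separability assumption $\Delta_\ba\cdot\Delta_\bb\neq 0$ of Section \ref{seSpe} is satisfied over $M=k(s,t)$.

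The second step is to verify the only remaining hypothesis of Theorem \ref{thgen}, namely that $F_0(\ba,\bb;X)=G_0(s,t;X)$ has no multiple root over $M$. This is immediate from the discriminant already computed just above the statement: $\mathrm{Disc}_X G_0(s,t;X)=t^{15}/s^3$, a nonzero element of $k(s,t)$; hence $G_0(s,t;X)$ is separable over $k(s,t)$. With both steps in place, Theorem \ref{thgen} yields at once that $G_0(s,t;X)$ is $k$-generic (not necessarily irreducible) for $\mathfrak{S}_3\times\mathfrak{S}_3$, which is the assertion.

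I do not expect a real obstacle here, since every hypothesis of Theorem \ref{thgen} is furnished by facts already established in the excerpt. The only points worth an explicit word of care are (i) that $X^3+sX+s$ is genuinely $\mathfrak{S}_3$-generic in characteristic $3$, so that the inputs to Theorem \ref{thgen} are legitimate in this characteristic, and (ii) that the formula for $F_0(0,s_2,s_3,0,t_2,t_3;X)$ displayed above, derived from the relations (\ref{polyF1ST}) which were asserted to remain valid in characteristic $3$, does specialize to $G_0(s,t;X)$ under $(s_2,s_3,t_2,t_3)=(s,-s,t,-t)$; this is a routine substitution.
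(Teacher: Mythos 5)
Your proof is correct and follows exactly the route the paper intends: the proposition is an instance of Theorem \ref{thgen} with $H_1=H_2=\mathfrak{S}_3$, $\ba=(0,s,-s)$, $\bb=(0,t,-t)$ and $j=0$, the hypotheses being supplied by the $\mathfrak{S}_3$-genericity of $X^3+sX+s$ in characteristic $3$ and by the nonvanishing of $\mathrm{Disc}_X G_0(s,t;X)=t^{15}/s^3$. Your remark that one must take $j=0$ rather than $j=2$ (since $F_2=X^6$ here) is precisely the point the paper is making in this subsection, so there is nothing to add.
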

\begin{theorem}
For $a,b\in M$ with $ab\neq 0$, 
the decomposition type of irreducible factors $h_{\mu}(X)$ of $G_0(a,b;X)$  
over $M$ gives an answer to the subfield problem of $X^3+sX+s$ as 
on Table $1$. 
\end{theorem}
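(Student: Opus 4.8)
The plan is to deduce the theorem from the general results of Sections~\ref{seTschirn}--\ref{seSpe} in the same way that Theorem~\ref{th} was proved, using the characteristic~$3$ computations already assembled in this subsection in place of the characteristic~$\neq 3$ ones.

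First I would fix the specialization $\ba=(0,a,-a)$ and $\bb=(0,b,-b)$ in $M^3$, so that $f_3(\ba;X)=X^3+aX+a$ and $f_3(\bb;X)=X^3+bX+b$ are precisely the two cubics occurring in the statement. Since $\Da=\mathrm{Disc}_X f_3(\ba;X)=-a^3$ and $\Db=-b^3$, the hypothesis $ab\neq 0$ gives $\Da\cdot\Db\neq 0$, so both polynomials are separable over $M$ and the framework of Section~\ref{seSpe} applies to this specialization. Next I would record that $G_0(a,b;X)=F_0(\ba,\bb;X)=\prod_{\overline{g}\in H\backslash\Gst}(X-c_0^g)$ and that $\mathrm{Disc}_X G_0(a,b;X)=b^{15}/a^3\neq 0$; hence $F_0(\ba,\bb;X)$ has no multiple root for every admissible $(a,b)$. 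In particular, in contrast with the characteristic~$\neq 3$ case there is no degenerate locus to isolate: the polynomial $\Aa^3\Bb^2-27\Ab^3\Da$, which for these specializations equals $-729\,a^2b^2(4ab+27a+27b)$, vanishes identically modulo~$3$ --- which is exactly why one works with $F_0$ rather than with $F_2$ --- so a single uniform case here replaces the dichotomy of Theorems~\ref{thiso}--\ref{th}.

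With separability and the no-multiple-root property in hand I would feed in the general statements with $j=0$. Propositions~\ref{prop12} and~\ref{propc} give $M(c_0^g,c_1^g,c_2^g)=M(c_0^g)$ for all $g\in\Gst$, $L_\ba L_\bb=M(c_0^g\mid\overline{g}\in H\backslash\Gst)$, and $L_\ba L_\bb=L_\ba\,M(c_0^g)=L_\bb\,M(c_0^g)$, so that $f_3(\ba;X)$ and $f_3(\bb;X)$ are Tschirnhausen equivalent, hence have a common splitting field, over the root field $M_\mu$ of any irreducible factor $h_\mu(X)$ of $G_0(a,b;X)$. Theorem~\ref{throotf} (together with Corollary~\ref{cor1}, since $\mathfrak{S}_3$ and $C_3$ are transitive subgroups of $\mathfrak{S}_3$ all of whose index-$3$ subgroups are conjugate) relates the existence of a root of $G_0(a,b;X)$ in $M$ to the $M$-isomorphism of the quotient algebras $M[X]/(f_3(\ba;X))$ and $M[X]/(f_3(\bb;X))$, and hence to the mutual position of $L_\ba$ and $L_\bb$. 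Finally, since char $k=3\neq 2$, Proposition~\ref{propsplit} shows that $G_0(a,b;X)=F_0^+(X)F_0^-(X)$ splits into two cubics over $M$ exactly when $\Delta_\ba/\Delta_\bb\in M$, in particular whenever $G_\ba,G_\bb\subset\mathfrak{A}_3=C_3$.

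Under the standing hypotheses of the section --- $f_3(\ba;X)$ irreducible and $\#G_\ba\geq\#G_\bb$, so $G_\ba\in\{\mathfrak{S}_3,C_3\}$ and $G_\bb\in\{\mathfrak{S}_3,C_3,C_2,\{1\}\}$ --- I would then run the case analysis organized by the mutual position of $L_\ba$ and $L_\bb$, computing $[M(c_0^g):M]=[\Gab:g^{-1}Hg\cap\Gab]$ and combining it with $L_\ba L_\bb=L_\ba M(c_0^g)$, the root criterion above, and the $F_0^+F_0^-$ splitting, to read off the degree of each irreducible factor of $G_0(a,b;X)$. This step is a line-by-line transcription of cases (i-1)--(i-3) and (ii-1)--(ii-4) in the proof of Theorem~\ref{th}, with $F_0(\ba,\bb;X)$, $c_0^g$ and the discriminants $\Da=-a^3$, $\Db=-b^3$ playing the roles of $F_2(\ba,\bb;X)$, $c_2^g$ and the characteristic~$\neq 3$ discriminants; it produces exactly Table~$1$. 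I do not expect a genuine obstacle here: the only input specific to characteristic~$3$ is the identity $\mathrm{Disc}_X G_0(a,b;X)=b^{15}/a^3$, which is an elementary computation and has already been carried out, and the resulting argument is if anything cleaner than that of Theorem~\ref{th}, since the uniform non-vanishing of this discriminant removes the need for a separate degenerate case.
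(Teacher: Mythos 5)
Your proposal is correct and follows exactly the route the paper intends: the paper states this theorem without a separate proof, having just computed $\mathrm{Disc}_X G_0(s,t;X)=t^{15}/s^3$ precisely so that $ab\neq 0$ guarantees $F_0(\ba,\bb;X)$ has no multiple root, after which Proposition \ref{prop12}, Theorem \ref{throotf}, Proposition \ref{propsplit} and the case analysis of Theorem \ref{th} apply verbatim with $c_0^g$ in place of $c_2^g$. Your observation that in characteristic $3$ one must use $F_0$ rather than the degenerate $F_2=X^6$, and your substitution of Theorem \ref{throotf}/Corollary \ref{cor1} for the characteristic-$\neq 3$ Theorem \ref{thiso} in the cases $L_\ba=L_\bb$, are both exactly right.
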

%

\section{Cyclic cubic case}\label{seCyc}

Let the cyclic substitution $\sigma=(123)\in \mathfrak{S}_3$ act on $k(x_1,x_2,x_3)$ as 
\[
\sigma\ :\ x_1\ \longmapsto\ x_2,\ x_2\ \longmapsto\ x_3,\ x_3\ \longmapsto\ x_1. 
\]
Put $K_1=k(z_1,z_2,z_3)\subset k(x_1,x_2,x_3)$ where 
\[
z_1:=\frac{x_1-x_2}{x_2-x_3},\quad z_2:=\frac{x_2-x_3}{x_3-x_1},\quad 
z_3:=\frac{x_3-x_1}{x_1-x_2}.
\]
Then we have 
\[
z_2=\frac{-1}{1+z_1},\quad z_3=\frac{-(1+z_1)}{z_1}. 
\]
Hence the transcendental degree of $K_1$ over $k$ is equal to one, and 
$C_3=\langle\sigma\rangle$ acts on $K_1=k(z_1)$ faithfully as 
\[
\sigma\ :\ z_1\ \longmapsto\ \frac{-1}{1+z_1}\ \longmapsto\ \frac{-(1+z_1)}{z_1}\ 
\longmapsto\ z_1. 
\]
We consider the $C_3$-extension $K_1/K_1^{C_3}$. 
Put 
\begin{align*}
g^{C_3}(\widetilde{m};X)&=\prod_{x\in \mathrm{Orb}_{\langle\sigma\rangle}(z_1)}(X-x)
=\Bigl(X-z_1\Bigr)\Bigl(X+\frac{1}{1+z_1}\Bigr)\Bigl(X+\frac{1+z_1}{z_1}\Bigr)\\
&=X^3-\widetilde{m}X^2-(\widetilde{m}+3)X-1,
\end{align*}
where 
\begin{align*}
\widetilde{m}=\frac{z_1^3-3z_1-1}{z_1(z_1+1)}=
\frac{-(x_1^3+x_2^3+x_3^3-3x_1^2x_2-3x_2^2x_3-3x_3^2x_1+6x_1x_2x_3)}
{(x_2-x_1)(x_3-x_1)(x_3-x_2)}.
\end{align*}
We have $K_1^{C_3}=k(\widetilde{m})$ and 
the splitting field of $g^{C_3}(\widetilde{m};X)$ over $k(\widetilde{m})$ equals $K_1$. 
The polynomial $g^{C_3}(\widetilde{m};X)$ is $k$-generic for $C_3$ and is well-known 
as Shanks' simplest cubic \cite{Sha74}.
\begin{lemma}\label{lemC3}
The polynomials $f_3(\bs;X)=X^3-s_1X^2+s_2X-s_3$ and 
$g^{C_3}(\widetilde{m};X)=X^3-\widetilde{m}X^2-(\widetilde{m}+3)X-1$ 
are Tschirnhausen equivalent over $k(x_1,x_2,x_3)^{C_3}$. 
\end{lemma}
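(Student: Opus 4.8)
The plan is to produce an explicit Tschirnhausen transformation carrying $f_3(\bs;X)$ to $g^{C_3}(\widetilde{m};X)$ whose coefficients already lie in $M:=k(x_1,x_2,x_3)^{C_3}$, and then to obtain the opposite direction either by the mirror computation or by Lemma~\ref{lemM}. Since $x_1,x_2,x_3$ are distinct, the Vandermonde matrix $D$ of Section~\ref{seTschirn} is invertible; I set
\[
(c_0,c_1,c_2)^{\mathrm{t}}\ :=\ D^{-1}\,(z_1,z_2,z_3)^{\mathrm{t}},
\]
so that $z_i=c_0+c_1x_i+c_2x_i^2$ for $i=1,2,3$. By Cramer's rule each $c_i$ equals $\Dels^{-1}$ times the $3\times3$ determinant obtained from the columns $(1,1,1)^{\mathrm{t}}$, $(x_1,x_2,x_3)^{\mathrm{t}}$, $(x_1^2,x_2^2,x_3^2)^{\mathrm{t}}$ by replacing one of them with $(z_1,z_2,z_3)^{\mathrm{t}}$ (this is the formula for $u_i(\mathbf{x},\mathbf{y})$ of Section~\ref{seTschirn} evaluated at $\mathbf{y}=(z_1,z_2,z_3)$); in particular $c_i\in k(x_1,x_2,x_3)$.

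The crucial point is that in fact $c_0,c_1,c_2\in M$. The substitution $\sigma=(123)$ permutes $(x_1,x_2,x_3)$ and $(z_1,z_2,z_3)$ by the same $3$-cycle, hence permutes the rows of each of the determinants above by the $3$-cycle $(123)$, which is an even permutation; so each such determinant is fixed by $\sigma$. Moreover $\Dels=(x_2-x_1)(x_3-x_1)(x_3-x_2)$ is fixed by $\sigma$ as well (the two sign changes among its three factors cancel). Hence $\sigma(c_i)=c_i$, i.e. $c_i\in M$, for $i=0,1,2$, and therefore
\[
g^{C_3}(\widetilde{m};X)\ =\ \prod_{i=1}^3(X-z_i)\ =\ \prod_{i=1}^3\bigl(X-(c_0+c_1x_i+c_2x_i^2)\bigr),
\]
which exhibits $g^{C_3}(\widetilde{m};X)$ as a Tschirnhausen transformation of $f_3(\bs;X)$ over $M$ in the sense of~(\ref{eqci}). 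For the reverse direction I would run the same argument with the roles of the $x_i$ and the $z_i$ interchanged: the $z_i$ are pairwise distinct (equivalently $\mathrm{Disc}_X g^{C_3}(\widetilde{m};X)=(\widetilde{m}^2+3\widetilde{m}+9)^2\neq0$), so the Vandermonde in $z_1,z_2,z_3$ is invertible, and the same parity argument produces $d_0,d_1,d_2\in M$ with $x_i=d_0+d_1z_i+d_2z_i^2$, whence $f_3(\bs;X)=\prod_{i=1}^3\bigl(X-(d_0+d_1z_i+d_2z_i^2)\bigr)$ is a Tschirnhausen transformation of $g^{C_3}(\widetilde{m};X)$ over $M$. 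Combining the two directions gives the asserted Tschirnhausen equivalence over $M$; alternatively, once the forward direction is established the reverse one follows immediately from Lemma~\ref{lemM} applied over the base field $M$, since $\Dels^2\neq0$ and $(\widetilde{m}^2+3\widetilde{m}+9)^2\neq0$ ensure that both polynomials are separable over $M$.

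The computation is routine; the only real content is the descent step. A priori the Tschirnhausen coefficients $c_i$ taking $f_3(\bs;X)$ to $g^{C_3}(\widetilde{m};X)$ are only known to lie in the common splitting field $k(x_1,x_2,x_3)$, and the substance of the lemma is that they already lie in its index-$2$ subfield $M=k(x_1,x_2,x_3)^{C_3}$. This descent is exactly what the evenness of the $3$-cycle $(123)$, together with the $\sigma$-invariance of $\Dels$, provides, so I do not expect any obstacle beyond the bookkeeping of which determinant belongs to which field.
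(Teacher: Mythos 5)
Your proof is correct, and the descent step is the right one: the whole content of the lemma is that the Tschirnhausen coefficients attached to the matching $x_i\mapsto z_i$ lie in the index-two subfield $k(x_1,x_2,x_3)^{C_3}$ of the common splitting field, and your observation that $\sigma=(123)$ permutes the rows of each Cramer determinant by an even permutation while fixing $\Dels$ establishes exactly that. The paper takes a different, more computational route: instead of arguing invariance abstractly, it writes the transformation down explicitly, namely the affine relations (\ref{eqc3xz}) in characteristic $\neq 2$ (and the analogous ones with $\betas$ in characteristic $2$), which express each $z_i$ as $c_0+c_1x_i$ and each $x_i$ as $d_0+d_1z_i$ with coefficients visibly in $k(s_1,s_2,s_3,\Dels)=k(x_1,x_2,x_3)^{C_3}$ (resp.\ in $k(s_1,s_2,s_3,\betas)$). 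The explicit computation buys two things your argument does not: it shows the equivalence is in fact affine (so $c_2=0$ for this coset, a fact quoted later in Section 5 to identify which of the three Tschirnhausen transformations corresponds to the identity), and it produces formulas that are reused in the discussion leading to Theorem \ref{thC3}. Conversely, your invariance argument is cleaner, uniform in the characteristic (no case split at char $k=2$), and more general: the same parity argument descends the coefficients of the diagonal Tschirnhausen transformation to the fixed field of any group of even permutations acting compatibly on the two ordered root vectors. Your fallback to Lemma \ref{lemM} for the reverse direction is also legitimate, since both cubics are separable with coefficients in the infinite field $k(x_1,x_2,x_3)^{C_3}$.
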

\begin{proof} 
We see that the splitting field of $g^{C_3}(\widetilde{m};X)$ over 
$k(x_1,x_2,x_3)^{C_3}$ is $k(x_1,x_2,x_3)$ as follows: 

(i) The case of char $k\neq 2$. 
We see $k(x_1,x_2,x_3)^{C_3}=k(s_1,s_2,s_3,\Dels)$ 
where $\Dels=(x_2-x_1)(x_3-x_1)(x_3-x_2)$. 
We have, furthermore, 
\begin{align}
x_i=\frac{-\Dels+s_1s_2-9s_3-2\Dels z_i}{2(s_1^2-3s_2)},\quad 
z_i=-\frac{\Dels-s_1s_2+9s_3}{2\Dels}-\frac{(s_1^2-3s_2)x_i}{\Dels}\label{eqc3xz}
\end{align}
for $i=1,2,3$. 

(ii) The case of char $k=2$. 
We now see $k(x_1,x_2,x_3)^{C_3}=k(s_1,s_2,s_3,\betas)$ where $\betas$ 
is the Berlekamp discriminant as in (\ref{Berle}). 
We have, for $i=1,2,3$, 
\[
x_i=\frac{(s_1s_2+s_3)(\betas +z_i)}{s_1^2+s_2},\quad 
z_i=\betas+\frac{(s_1^2+s_2)x_i}{s_1s_2+s_3}. 
\]
\end{proof}

We express the element $\widetilde{m}$ in terms of $s_1,s_2,s_3$ and 
$\Dels$ (resp. $\betas$) in the case of char $k\neq 2$ (resp. char $k=2$). 
For an arbitrary field $k$, we have 
\[
k(x_1,x_2,x_3)^{C_3}=k(s_1,s_2,s_3,x_1x_2^2+x_2x_3^2+x_3x_1^2)
\]
and 
\[
\widetilde{m}=\frac{-s_1^3+6s_1s_2-18s_3-3(x_1x_2^2+x_1^2x_3+x_2x_3^2)}
{-s_1s_2+3s_3+2(x_1x_2^2+x_1^2x_3+x_2x_3^2)}. 
\]
Then in the case of char $k\neq 2$, it follows from 
\begin{align*}
x_1x_2^2+x_2x_3^2+x_3x_1^2=(\Dels+s_1s_2-3s_3)/2 
\end{align*}
that 
\begin{align*}
\qquad \widetilde{m}=-\frac{3\Dels+2s_1^3-9s_1s_2+27s_3}{2\Dels}\quad 
\Bigl(=\ -\frac{3\Dels+\Bs}{2\Dels}\Bigr). 
\end{align*}
In the case of char $k=2$, we have 
\begin{align*}
x_1x_2^2+x_2x_3^2+x_3x_1^2=s_1s_2+\betas s_1s_2+\betas s_3,
\end{align*}
and then 
\begin{align*}
\qquad \widetilde{m}=\frac{s_1^3+s_1s_2+\betas s_1s_2+\betas s_3}{s_1s_2+s_3}\quad 
\Bigl(=\ \frac{s_1\As+\betas \Bs}{\Bs}\Bigr). 
\end{align*}

By applying Theorem \ref{th} to $M=k(x_1,x_2,x_3)^{C_3}$, there exist 
three Tschirnhausen transformations from $f_3(\bs;X)$ to 
$g^{C_3}(\widetilde{m};X)$ which are defined over $k(x_1,x_2,x_3)^{C_3}$. 
By specializing parameters $(t_1,t_2,t_3)\mapsto (\widetilde{m},-(\widetilde{m}+3),1)\in 
k(x_1,x_2,x_3)^{C_3}$ of the polynomial $F_2(\bs,\bt;X)$, we explicitly obtain coefficients 
$(c_0^g,c_1^g,c_2^g)$ of Tschirnhausen transformations from $f_3(\bs;X)$ to 
$g^{C_3}(\widetilde{m};X)$ over $k(x_1,x_2,x_3)^{C_3}$: 
\[
g^{C_3}(\widetilde{m};X)=\mathrm{Resultant}_Y\bigl(f_3(\bs;Y),X-(c_0^g+c_1^gY+c_2^gY^2)\bigr). 
\]
In the case of char $k\neq 2$, we obtain explicit factors of 
$F_2(\bs,\widetilde{m},-(\widetilde{m}+3),1;X)$ by (\ref{polyF2}): 
\begin{align*}
&F_2(s_1,s_2,s_3,\widetilde{m},-(\widetilde{m}+3),1;X)\\
&=X\Bigl(X-\frac{\As^2}{\Dels^2}\Bigr)\Bigl(X+\frac{\As^2}{\Dels^2}\Bigr)
\Bigl(X^3-\frac{\As^4}{\Dels^4}X-\frac{\As^3(2\As s_1-3s_1s_2+27s_3)}{\Dels^5}\Bigr).
\end{align*}
Hence we have 
$\bigl\{c_2^g\ |\ \overline{g}=\overline{(1,\tau)}\in H\backslash G_{\bs,\bt},\ \psi(\tau)\in 
\mathfrak{A}_3\bigr\}=\{0, \As^2/\Dels^2,-\As^2/\Dels^2\}$. 
From (\ref{eqc3xz}) we see $c_2=0$. 
By (\ref{eqxy}) and (\ref{eqxy2}), we obtain $c_0^g$ and $c_1^g$ 
from $c_2^g$: 
\begin{align*}
&(c_0,c_1,c_2)=\Bigl(\frac{\Es-\Dels}{2\Dels},-\frac{\As}{\Dels},0\Bigr),\\
&(c_0^{g_1},c_1^{g_1},c_2^{g_1})=\\
&\Bigl(\frac{\As(\As s_2-s_2^2+3s_1s_3)-(\As s_1-\Es)\Dels-\Dels^2}{2\Dels^2},
\frac{\As(-2\As s_1+\Es+\Dels)}{2\Dels^2},\frac{\As^2}{\Dels^2}\Bigr),\\
&(c_0^{g_2},c_1^{g_2},c_2^{g_2})=\\
&\Bigl(\frac{-\As(\As s_2-s_2^2+3s_1s_3)-(\As s_1-\Es)\Dels-\Dels^2}{2\Dels^2},
\frac{\As(2\As s_1-\Es+\Dels)}{2\Dels^2},-\frac{\As^2}{\Dels^2}\Bigr)
\end{align*}
where $\Es=s_1s_2-9s_3$, 
$\overline{g_i}=\overline{(1,\tau_i)}\in H\backslash G_{\bs,\bt}$ 
and $\psi(\tau_i)\in \mathfrak{A}_3\backslash\{1\}, (i=1,2)$. 
With the aid of computer algebra, we can check 
\begin{align}
z_2=\frac{x_2-x_3}{x_3-x_1}=c_0^{g_1}+c_1^{g_1}x_1+c_2^{g_1}x_1^2,\quad 
z_3=\frac{x_3-x_1}{x_1-x_2}=c_0^{g_2}+c_1^{g_2}x_1+c_2^{g_2}x_1^2.\label{eqz2z3}
\end{align}
Hence we get $\psi(\tau_1)=(123)\in \mathfrak{A}_3$ and 
$\psi(\tau_2)=(132)\in \mathfrak{A}_3$. 

In the case of char $k=2$, we see $\As^3\Bt^2-27\At^3\Ds=0$ where 
$\bt=(t_1,t_2,t_3)=(\widetilde{m},-(\widetilde{m}+3),1)$. 
Hence, by using Lemma \ref{lemex} (i), we have 
\begin{align*}
&F_2(s_1,s_2,s_3,\widetilde{m},-(\widetilde{m}+3),1;X)\\
&=X\Bigl(X+\frac{(s_1^2+s_2)^2}{(s_1s_2+s_3)^2}\Bigr)^2
\Bigl(X^3+\frac{(s_1^2+s_2)^4}{(s_1s_2+s_3)^4}X
+\frac{(s_1^2+s_2)^3}{(s_1s_2+s_3)^4}\Bigr).
\end{align*}

By (\ref{eqxy}) and (\ref{eqxy2}) we obtain 
\[
(c_0,c_1,c_2)=\Bigl(\betas,\frac{s_1^2+s_2}{s_1s_2+s_3},0\Bigr).
\]
We also obtain the coefficients of two other Tschirnhausen 
transformations which satisfy (\ref{eqz2z3}): 
\begin{align*}
&(c_0^{g_1},c_1^{g_1},c_2^{g_1})=\\
&\Bigl(\frac{\betas(s_1^3+s_3)}{s_1s_2+s_3},
\frac{(s_1^2+s_2)(s_1^3+s_3+\betas s_1s_2+\betas s_3)}{(s_1s_2+s_3)^2},
\frac{(s_1^2+s_2)^2}{(s_1s_2+s_3)^2}\Bigr),\\
&(c_0^{g_2},c_1^{g_2},c_2^{g_2})=\\
&\Bigl(\frac{s_1^3+s_1s_2+\betas s_1^3+\betas s_3}{s_1s_2+s_3},
\frac{(s_1^2+s_2)(s_1^3+s_1s_2+\betas s_1s_2+\betas s_3)}{(s_1s_2+s_3)^2},
\frac{(s_1^2+s_2)^2}{(s_1s_2+s_3)^2}\Bigr). 
\end{align*}

Now we assume that the Galois group of $f_3(\ba;X)=X^3-a_1X^2+a_2X-a_3$ over $M$ is 
isomorphic to $C_3$. 
By specializing parameters $\bs=(s_1,s_2,s_3)\mapsto \ba=(a_1,a_2,a_3)\in M^3$ with 
$\Aa\neq 0$ and $\Ba\neq 0$ in Lemma \ref{lemC3}, we see that $f_3(\ba;X)$ and 
$g^{C_3}(m;X)=X^3-mX^2-(m+3)X-1$ are Tschirnhausen equivalent over $M$ where 
\begin{align}
m=
\begin{cases}
\, \displaystyle{-\frac{3\Dela+2a_1^3-9a_1a_2+27a_3}{2\Dela}},
\ \ \mathrm{if}\ \ \mathrm{char}\ k\neq 2,\vspace*{3mm}\\
\, \displaystyle{\frac{a_1^3+a_1a_2+\betaa a_1a_2+\betaa a_3}{a_1a_2+a_3}},\ \ 
\ \ \mathrm{if}\ \ \mathrm{char}\ k=2. 
\end{cases}\label{eqm}
\end{align}

From now on we take $\ba=(m,-(m+3),1), \bb=(n,-(n+3),1)$. 
Then we have 
$f_3(\ba;X)=X^3-mX^2-(m+3)X-1, f_3(\bb;X)=X^3-nX^2-(n+3)X-1$, and 
\begin{align*}
&\Da=\mathrm{Disc}_Xf_3(\ba;X)=(m^2+3m+9)^2,\\
&\Aa^3\Bb^2-27\Ab^3\Da=\Da\Db(2mn+3m+3n+18)(2mn+3m+3n-9). 
\end{align*}
We have $\Dela=m^2+3m+9$ and $\Delb=n^2+3n+9$ 
and also 
\begin{align*}
F_2(\ba,\bb;X)=F_2^+(m,n;X)F_2^-(m,n;X)
\end{align*}
where 
\begin{align*}
F_2^+(m,n;X)\,&=\,X^3-\frac{\Delb}{\Dela}X-\frac{(m-n)\Delb}{\Dela^2},\\
F_2^-(m,n;X)\,&=\,X^3-\frac{\Delb}{\Dela}X+\frac{(m+n+3)\Delb}{\Dela^2}
\end{align*}
and 
\begin{align*}
\mathrm{Disc}_X(F_2^+(m,n;X))&=\frac{\Delb^2(2mn+3m+3n+18)^2}{\Dela^4},\\
\mathrm{Disc}_X(F_2^-(m,n;X))&=\frac{\Delb^2(2mn+3m+3n-9)^2}{\Dela^4}.
\end{align*}
Note that $F_2^-(m,n;X)=F_2^+(m,-n-3;X)$. 
We see that if $m+n+3=0$ then $X^3-mX^2-(m+3)X-1$ and 
$X^3-nX^2-(n+3)X-1$ have the same splitting field over $M$. 
Hence 
\begin{align*}
X^3-mX^2-(m+3)X-1\quad \mathrm{and}\quad X^3+(m+3)X^2+mX-1
\end{align*}
are Tschirnhausen equivalent over $M$. 
By Lemma \ref{lemex} (i) and Theorem \ref{thiso} (1)
if $(2mn+3m+3n+18)(2mn+3m+3n-9)=0$ then $X^3-mX^2-(m+3)X-1$ and $X^3-nX^2-(n+3)X-1$ 
have the same splitting field over $M$ because $F_2(\ba,\bb;X)$ has multiple roots but 
also has a simple root. 

\begin{theorem}
For $m,n\in M$, two splitting fields of $X^3-mX^2-(m+3)X-1$ and of 
$X^3-nX^2-(n+3)X-1$ over $M$ coincide if and only if 
$F_2^+(m,n;X)F_2^-(m,n;X)$ has a root in $M$. 
\end{theorem}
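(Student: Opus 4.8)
The plan is to reduce the statement to Theorem \ref{thiso} (together with Lemma \ref{lemex}) in the main case where $f_3(\ba;X)=X^3-mX^2-(m+3)X-1$ is irreducible over $M$, and to dispose of the degenerate cases, in which one or both of $f_3(\ba;X)$ and $f_3(\bb;X)$ split over $M$, by a short direct argument. Throughout I set $\ba=(m,-(m+3),1)$ and $\bb=(n,-(n+3),1)$ and use the two facts already recorded above, namely $F_2^+(m,n;X)F_2^-(m,n;X)=F_2(\ba,\bb;X)$ and $\Aa^3\Bb^2-27\Ab^3\Da=\Da\Db(2mn+3m+3n+18)(2mn+3m+3n-9)$. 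I also retain the running hypothesis $\Da\Db\neq0$ — without it $F_2^\pm(m,n;X)$ is not even defined — whence $\Aa=m^2+3m+9\neq0$ and $\Ab=n^2+3n+9\neq0$. The structural remark that drives everything is that $\Da=(m^2+3m+9)^2$ is a perfect square, so over $M$ each of $f_3(\ba;X)$, $f_3(\bb;X)$ is either irreducible with Galois group $C_3$ — in which case its root field already equals its splitting field — or it splits completely over $M$. Consequently $\mathrm{Spl}_M f_3(\ba;X)=\mathrm{Spl}_M f_3(\bb;X)$ holds if and only if $M[X]/(f_3(\ba;X))$ and $M[X]/(f_3(\bb;X))$ are isomorphic as $M$-algebras, which is the form in which Theorem \ref{throotf} and Theorem \ref{thiso} deliver information.

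First I would treat the case that $f_3(\ba;X)$ is irreducible over $M$. Then $\Aa\Ab\neq0$, and the assertion follows from Theorem \ref{thiso}: when $\Aa^3\Bb^2-27\Ab^3\Da\neq0$ it is part (2) verbatim, and when $\Aa^3\Bb^2-27\Ab^3\Da=0$ part (1) gives $\mathrm{Spl}_M f_3(\ba;X)=\mathrm{Spl}_M f_3(\bb;X)$ while Lemma \ref{lemex}(i) exhibits the simple root $-6\Ab^2/(\Aa\Bb)\in M$ of $F_2(\ba,\bb;X)$, so both conditions in the equivalence hold. Next, suppose $f_3(\ba;X)$ is reducible; then it splits completely over $M$ and $\mathrm{Spl}_M f_3(\ba;X)=M$. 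If $f_3(\bb;X)$ likewise splits completely, then both splitting fields equal $M$ and hence coincide, and Lagrange interpolation at the (distinct) roots $\alpha_1,\alpha_2,\alpha_3\in M$ of $f_3(\ba;X)$ gives $(c_0,c_1,c_2)\in M^3$ with $\prod_i\bigl(X-(c_0+c_1\alpha_i+c_2\alpha_i^2)\bigr)=f_3(\bb;X)$, i.e. a Tschirnhausen transformation over $M$, so $(c_0,c_1,c_2)=(c_0^g,c_1^g,c_2^g)$ for some $g\in\Gst$ and $c_2^g\in M$ is a root of $F_2(\ba,\bb;X)$; both conditions hold. If instead $f_3(\bb;X)$ is irreducible, then $M=\mathrm{Spl}_M f_3(\ba;X)\subsetneq\mathrm{Spl}_M f_3(\bb;X)$, so the splitting fields differ, and I would show $F_2(\ba,\bb;X)$ has no root in $M$: every root has the form $c_2^g=\sum_{j=1}^3(D^{-1})_{3,j}\,\beta_{\psi(\tau)(j)}$, where $D$ is the Vandermonde matrix of $\alpha_1,\alpha_2,\alpha_3\in M$, so $c_2^g$ is an $M$-linear combination of the full set of conjugate roots $\beta_1,\beta_2,\beta_3$ of the irreducible simplest cubic $f_3(\bb;X)$; since $1,\beta_1,\beta_2$ are $M$-linearly independent, $\beta_1+\beta_2+\beta_3=n\in M$, and the coordinate sum of the bottom row of $D^{-1}$ is $0$, such a combination can lie in $M$ only when all three coefficients vanish, forcing $\det D=0$ and contradicting $\Da\neq0$. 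Hence both conditions fail, and combining the cases proves the theorem.

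The step I expect to be the main obstacle is this last configuration — one cubic split over $M$, the other irreducible — because there $F_2(\ba,\bb;X)$ may have a repeated root (in particular on the locus $\Aa^3\Bb^2-27\Ab^3\Da=0$, where $D_{0,2}^0(\ba,\bb)$ and $D_{1,2}^0(\ba,\bb)$ vanish), so Theorem \ref{throotf} and Corollary \ref{cor1} are not available off the shelf and the absence of an $M$-rational root must be extracted from the explicit linear-algebra description of the $c_2^g$. A secondary point to get right is the root-field/splitting-field dictionary for the simplest cubics across all four Galois-group patterns $\{C_3,C_3\}$, $\{C_3,1\}$, $\{1,C_3\}$, $\{1,1\}$, which is precisely what permits replacing ``the root algebras are $M$-isomorphic'' by ``the splitting fields coincide'' when invoking Theorem \ref{thiso}.
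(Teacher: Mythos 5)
Your main case is exactly the paper's (implicit) argument: when $f_3(\ba;X)$ is irreducible the authors obtain the equivalence from Theorem \ref{thiso}\,(2) when $\Aa^3\Bb^2-27\Ab^3\Da=\Da\Db(2mn+3m+3n+18)(2mn+3m+3n-9)\neq 0$, and from Theorem \ref{thiso}\,(1) together with the rational simple root $-6\Ab^2/(\Aa\Bb)$ supplied by Lemma \ref{lemex}\,(i) when it vanishes; the paper says no more, leaving the reducible specializations to the ambient assumption that the Galois group is $C_3$. Your supplementary treatment of the degenerate cases is a genuine addition, and the split/split case via Lagrange interpolation and the split/irreducible case via the vanishing row sum of $D^{-1}$ are the right ideas. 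One step there is asserted but not automatic: the $M$-linear independence of $1,\beta_1,\beta_2$ is \emph{false} for a general irreducible cubic with cyclic splitting field --- for $X^3-2$ over $\mathbb{Q}(\zeta_3)$ one has $\beta_2=\zeta_3\beta_1$ --- so it cannot be quoted as a generic fact. It does hold for the simplest cubic, because $f_3(\bb;X)=(X+1)\bigl(X^2-(n+1)X-2\bigr)+1$ gives $\beta_2=-1/(1+\beta_1)=\beta_1^2-(n+1)\beta_1-2$, whose $\beta_1^2$-coefficient is nonzero, so no relation $\beta_2\in M+M\beta_1$ can exist; you should insert this one-line computation, after which your conclusion (each $c_2^g=\sum_j f_j\beta_j$ with $\sum_j f_j=0$ and all $f_j\neq 0$ cannot lie in $M$) goes through. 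A second, smaller point: in characteristic $2$ the squareness of $\Da$ does not by itself place $G_\ba$ inside $\mathfrak{A}_3$; the correct characteristic-free reason that the simplest cubic is either irreducible cyclic or totally split is again that all roots are rational functions of any one root ($\beta_2=-1/(1+\beta_1)$, $\beta_3=-(1+\beta_1)/\beta_1$), so the root field equals the splitting field. With these two repairs your argument is complete and strictly more general than what the paper records.
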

\begin{example}
We take $M=\mathbb{Q}$. 
If $(m,n)\in\{(-1,5),$ $(-1,1259),$ $(0,54),$ $(5,1259)\}$ then 
$F_2^+(m,n;X)$ splits completely over $\mathbb{Q}$. 
If $(m,n)\in\{(-1,12),$ $(0,3),$ $(1,66),$ $(2,2389),$ $(3,54),$ $(5,12),$ 
$(12,1259)\}$ then $F_2^-(m,n;X)$ splits completely over $\mathbb{Q}$. 
Hence we see 
\[
L_{-1}=L_5=L_{12}=L_{1259},\quad L_0=L_3=L_{54},\quad L_1=L_{66},\quad L_2=L_{2389},
\]
where $L_m=\mathrm{Spl}_\mathbb{Q} (X^3-mX^2-(m+3)X-1)$. 
We have observed that for integers $m$ and $n$ in the range 
$-1\leq m< n\leq 100000$, $F_2^+(m,n;X)F_2^-(m,n;X)$ has a linear 
factor over $\mathbb{Q}$ only for the values of $(m,n)$ noted above. 
\end{example}

In the case of char $k\neq 2, 3$, we obtain by using (\ref{eqm}) that $F_2^+(m,n;X)$ and 
\[
g^+(m,n;X):=X^3+\frac{3(mn+6m-3n+9)}{2mn+3m+3n+18}X^2
-\frac{3(mn-3m+6n+9)}{2mn+3m+3n+18}X-1
\]
are Tschirnhausen equivalent over $M$. 
We also see that 
$F_2^-(m,n;X)$ and 
\[
g^-(m,n;X):=X^3+\frac{3(mn-3m-3n-18)}{2mn+3m+3n-9}X^2
-\frac{3(mn+6m+6n+9)}{2mn+3m+3n-9}X-1
\]
are Tschirnhausen equivalent over $M$. 

Putting $Z=(X-1)/(X+2)$ we have $X=-(2Z+1)/(Z-1)$ and 
\begin{align*}
h^+(m,n;Z)\, &:=\, \frac{1}{3^3(m-n)}\,g^+\Bigl(m,n;\frac{-(2Z+1)}{Z-1}\Bigr)\\
\, &=\, Z^3-\frac{mn+3n+9}{m-n}Z^2-\frac{mn+3m+9}{m-n}Z-1,\\
h^-(m,n;Z)\, &:=\, \frac{-1}{3^3(m+n+3)}\,g^-\Bigl(m,n;\frac{-(2Z+1)}{Z-1}\Bigr)\\
\, &=\, Z^3+\frac{mn+3m+3n}{m+n+3}Z^2+\frac{mn-9}{m+n+3}Z-1. 
\end{align*}
We also obtain 
\[
\mathrm{Disc}_Z(h^+(m,n;Z))=\frac{\Dela^2 \Delb^2}{(m-n)^4},\quad 
\mathrm{Disc}_Z(h^-(m,n;Z))=\frac{\Dela^2 \Delb^2}{(m+n+3)^4}. 
\]

In the case of char $k=3$, using the result in Subsection \ref{subsech3}, 
we directly see that $F_2^+(m,n;X)$ (resp. $F_2^-(m,n;X)$) and $h^+(m,n;Z)$ 
(resp. $h^-(m,n;Z)$) are Tschirnhausen equivalent over $M$. 
Therefore we get the following theorem which is an analogue to the results 
of Morton \cite{Mor94} and Chapman \cite{Cha96}. 
\begin{theorem}\label{thC3}
Assume that char $k\neq 2$. 
For $m,n\in M$, two splitting fields of $X^3-mX^2-(m+3)X-1$ and of $X^3-nX^2-(n+3)X-1$ 
over $M$ coincide if and only if there exists $z\in M$ such that either 
\begin{align*}
n\,=\,\frac{m(z^3-3z-1)-9z(z+1)}{mz(z+1)+z^3+3z^2-1}\ \mathit{or}\ 
n\,=\,-\frac{m(z^3+3z^2-1)+3(z^3-3z-1)}{mz(z+1)+z^3+3z^2-1}.
\end{align*}
\end{theorem}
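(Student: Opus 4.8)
The plan is to obtain Theorem \ref{thC3} from the (unlabelled) theorem stated just before it together with the Tschirnhausen equivalences recorded in the text immediately preceding the statement. By that previous theorem the two splitting fields coincide if and only if $F_2^+(m,n;X)F_2^-(m,n;X)$ has a root in $M$, i.e. if and only if at least one of the monic cubics $F_2^+(m,n;X)$, $F_2^-(m,n;X)$ has a root in $M$. Now $F_2^+(m,n;X)$ is linked to $h^+(m,n;Z)$ over $M$: when $\mathrm{char}\,k\neq 2,3$ one has $F_2^+(m,n;X)\sim g^+(m,n;X)$ (Tschirnhausen equivalent over $M$) and $h^+(m,n;Z)$ is obtained from $g^+(m,n;X)$ by the $M$-rational fractional-linear substitution $Z=(X-1)/(X+2)$, while when $\mathrm{char}\,k=3$ the polynomials $F_2^+(m,n;X)$ and $h^+(m,n;Z)$ are directly Tschirnhausen equivalent over $M$ by Subsection \ref{subsech3}; in either case $F_2^+(m,n;X)$ and $h^+(m,n;Z)$ have the same number of roots in $M$ (a Tschirnhausen equivalence gives $M$-isomorphic quotient algebras and hence the same decomposition type, and the quoted fractional-linear substitution induces an $M$-rational bijection of roots away from $X=-2$). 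The same applies to $F_2^-(m,n;X)$ and $h^-(m,n;Z)$. Hence it suffices to decide, in terms of $n$, when $h^+(m,n;Z)$ or $h^-(m,n;Z)$ has a root $z\in M$.

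For $h^+$ I would clear the denominator $m-n$ in $h^+(m,n;z)=0$ to get
\begin{align*}
(m-n)z^3-(mn+3n+9)z^2-(mn+3m+9)z-(m-n)=0,
\end{align*}
and observe that this relation is linear in $n$: the coefficient of $n$ is $-\bigl(mz(z+1)+z^3+3z^2-1\bigr)$ and the terms free of $n$ sum to $m(z^3-3z-1)-9z(z+1)$. Consequently, whenever $mz(z+1)+z^3+3z^2-1\neq 0$, the relation is equivalent to the first displayed formula for $n$. The second formula is then immediate from the identity $h^-(m,n;Z)=h^+(m,-n-3;Z)$ — the counterpart of $F_2^-(m,n;X)=F_2^+(m,-n-3;X)$ — by replacing $n$ by $-n-3$ in the first formula; equivalently, clear $m+n+3$ in $h^-(m,n;z)=0$ and solve for $n$ the same way. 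Conversely, if $z\in M$ satisfies one of the two formulas with $mz(z+1)+z^3+3z^2-1\neq 0$, running the algebra backwards shows $z$ is a root in $M$ of $h^+(m,n;Z)$, respectively of $h^-(m,n;Z)$.

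Everything above is routine once the prior results are in hand; the care is needed with degenerate configurations, and that is where I expect the only genuine difficulty to lie. The construction presupposes $\Da\Db\neq 0$, that is $(m^2+3m+9)(n^2+3n+9)\neq 0$; the excluded values force $\sqrt{-3}\in M$ and, in characteristic $3$, reduce to $m=0$ or $n=0$, cases already handled in Subsection \ref{subsech3}, so they can be disposed of separately. Moreover $h^+(m,n;Z)$ is defined only for $m\neq n$ and $h^-(m,n;Z)$ only for $m+n+3\neq 0$; but if $m=n$ (resp. $m+n+3=0$) the two splitting fields coincide trivially, and $z=0$ satisfies the first (resp. second) formula with $n=m$ (resp. $n=-m-3$), so the asserted equivalence persists. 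Finally, when $2mn+3m+3n+18=0$ or $2mn+3m+3n-9=0$ one of $F_2^{\pm}(m,n;X)$ acquires a double root; that root is nevertheless rational (for $F_2^+(m,n;X)$ it equals $-3(m-n)/(2\Dela)$, with an analogous expression for $F_2^-(m,n;X)$), so $F_2^+(m,n;X)F_2^-(m,n;X)$ still has a root in $M$, in agreement with Lemma \ref{lemex} and Theorem \ref{thiso}$(1)$, and one checks that the corresponding $h^{\pm}$ also has a root in $M$. Accounting for these cases completes the equivalence of the three conditions.
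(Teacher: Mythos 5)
Your proposal is correct and follows essentially the same route as the paper: the unlabelled theorem preceding the statement reduces everything to whether $F_2^+(m,n;X)F_2^-(m,n;X)$ has a root in $M$, the recorded Tschirnhausen equivalences transfer this to $h^{\pm}(m,n;Z)$, solving $h^{\pm}(m,n;z)=0$ for $n$ (which appears linearly) gives the two displayed formulas, and the remaining work is the degenerate locus $(m-n)(m+n+3)(2mn+3m+3n+18)(2mn+3m+3n-9)=0$, which the paper's own (very terse) proof treats by exhibiting $z=0$ and $z=1$ and which you treat equivalently via the rational double roots of $F_2^{\pm}$. The only point left implicit in both your argument and the paper's is the case where a rational root $z$ of $h^{\pm}$ annihilates the denominator $mz(z+1)+z^3+3z^2-1$ (forcing $L_m=M$, where one must pass to another rational root), so this is a shared, repairable omission rather than a defect of your proposal relative to the paper.
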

\begin{proof}
We should check only the case of $(m-n)(m+n+3)(2mn+3m+3n+18)(2mn+3m+3n-9)=0$. 
If we take $z=0$ then we have $n=m$ or $n=-m-3$. 
If we take $z=1$ then we get $n=-3(m+6)/(2m+3)$ or $n=-3(m-3)/(2m+3)$ 
which corresponds to $2mn+3m+3n+18=0$ or $2mn+3m+3n-9$. 
\end{proof}
By applying Hilbert's irreducibility theorem and Siegel's  theorem for curves of 
genus $0$ (cf. \cite[Theorem 6.1]{Lan78}, \cite[Chapter 8, Section 5]{Lan83}, 
\cite[Theorem D.8.4]{HS00}) to Theorem \ref{thC3} respectively, 
we get the following corollaries: 
\begin{corollary}\label{Hil2}
If $M$ is a Hilbertian field then for a fixed $m\in M$ there exist infinitely many $n\in M$ 
such that two splitting fields of $X^3-mX^2-(m+3)X-1$ and of $X^3-nX^2-(n+3)X-1$ over $M$ 
coincide. 
\end{corollary}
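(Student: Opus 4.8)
The plan is to extract from Theorem~\ref{thC3} an explicit one‑parameter rational family of admissible values of $n$, and then to invoke the elementary fact that a non‑constant rational function over an infinite field has infinite image (every Hilbertian field is infinite).

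Fix $m\in M$ and put $\ba=(m,-(m+3),1)$. By Theorem~\ref{thC3}, for \emph{every} $z\in M$ at which the denominator below is nonzero the element
\[
\phi(z)\,:=\,\frac{m(z^3-3z-1)-9z(z+1)}{mz(z+1)+z^3+3z^2-1}
\,=\,\frac{mz^3-9z^2-(3m+9)z-m}{z^3+(m+3)z^2+mz-1}
\]
is a value $n$ with $\mathrm{Spl}_M\bigl(X^3-mX^2-(m+3)X-1\bigr)=\mathrm{Spl}_M\bigl(X^3-nX^2-(n+3)X-1\bigr)$. Thus it suffices to show that $\phi\in M(z)$ is non‑constant. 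Its denominator is a monic cubic, so $\phi$ is a well‑defined rational function; forming the numerator minus $c$ times the denominator and equating coefficients of $1,z,z^2,z^3$, one finds that $\phi\equiv c$ would force $c=m$ together with $m^2+3m+9=0$. Since $\Da=(m^2+3m+9)^2$, this last relation says exactly that $f_3(\ba;X)$ is inseparable over $M$; so under the standing separability convention, i.e. $\Dela=m^2+3m+9\neq 0$, the function $\phi$ is non‑constant.

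To conclude: $M$ is Hilbertian, hence infinite, and deleting from $M$ the at most three roots of the denominator of $\phi$ leaves an infinite set $M_0$. A non‑constant element of $M(z)$ has all of its fibres finite, so $\phi(M_0)$ is infinite; after discarding the at most finitely many $n$ with $n^2+3n+9=0$, we still obtain infinitely many admissible values $n$, each of which satisfies the criterion of Theorem~\ref{thC3}. The argument is short once Theorem~\ref{thC3} is available, so there is no serious obstacle; the only delicate point is the degenerate case $m^2+3m+9=0$ (possible only when $\sqrt{-3}\in M$), where $f_3(\ba;X)$ is inseparable and, as one checks, both rational functions appearing in Theorem~\ref{thC3} collapse to constants — this case falls outside the separability framework used throughout the paper and, if one insists, can be treated by hand. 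Note that Hilbert's irreducibility theorem here supplies (much more than) the infinitude of $M$ that the proof actually needs; it could additionally be used to guarantee that infinitely many of the resulting $n$ yield an irreducible cubic with Galois group $C_3$, although that refinement is not required for the statement as phrased.
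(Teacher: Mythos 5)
Your proof is correct, and it reaches the statement by the same route the paper intends --- namely, reading Theorem~\ref{thC3} as a rational parametrization $z\mapsto n$ of the admissible values of $n$ --- but it replaces the paper's invocation of Hilbert's irreducibility theorem with something strictly weaker and more elementary: a non-constant element of $M(z)$ has finite fibres, so its image over the infinite set $M$ (Hilbertian fields are infinite) is infinite. The paper offers no written proof beyond the phrase ``by applying Hilbert's irreducibility theorem to Theorem~\ref{thC3},'' and HIT would more naturally be deployed to get the stronger conclusion that infinitely many of the resulting cubics are irreducible with group $C_3$; for the literal statement of the corollary your counting argument suffices, and you correctly note this. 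You also supply a detail the paper omits: the verification that the rational function is non-constant, which you reduce to the relation $m^2+3m+9=0$, i.e.\ $\Delta_{\mathbf a}=0$, so that the only degenerate case is the one already excluded by the paper's standing separability convention $\Delta_{\mathbf a}\cdot\Delta_{\mathbf b}\neq 0$ (and, as you observe, it can be disposed of directly if one insists on covering it). The handling of the finitely many poles of $\phi$ and of repeated values is careful and complete.
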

\begin{corollary}\label{Siegel2}
Let $M$ be a number field and $\mathcal{O}_M$ the ring of integers in $M$. 
For a given integer $m\in \mathcal{O}_M$, there exist only finitely many integers 
$n\in\mathcal{O}_M$ such that two splitting fields of $X^3-mX^2-(m+3)X-1$ and of 
$X^3-nX^2-(n+3)X-1$ over $M$ coincide. 
\end{corollary}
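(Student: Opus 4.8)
The plan is to deduce the corollary from Theorem \ref{thC3} by applying Siegel's theorem to the rational parametrization it provides, in direct analogy with the proof of Corollary \ref{Siegel1}. Fix $m\in\mathcal{O}_M$ and set $\Delta_m:=m^2+3m+9$; I may assume $\Delta_m\neq 0$, since otherwise $g^{C_3}(m;X)=X^3-mX^2-(m+3)X-1$ is inseparable and the statement is vacuous. By Theorem \ref{thC3}, every $n\in M$ with $\mathrm{Spl}_M g^{C_3}(m;X)=\mathrm{Spl}_M g^{C_3}(n;X)$ is of the form $n=\phi^{+}(z)$ or $n=\phi^{-}(z)$ for some $z\in M$, where
\[
\phi^{+}(z)=\frac{m(z^3-3z-1)-9z(z+1)}{q(z)},\qquad
\phi^{-}(z)=-\frac{m(z^3+3z^2-1)+3(z^3-3z-1)}{q(z)},
\]
with common denominator $q(z):=mz(z+1)+z^3+3z^2-1=z^3+(m+3)z^2+mz-1$. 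Hence it suffices to show that for each sign the set $\phi^{\pm}(M)\cap\mathcal{O}_M$ is finite.

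First I would record the geometric input. Since $q(z)=-z^3\,g^{C_3}(m;1/z)$ is the reciprocal of Shanks' cubic, whose roots have product $1$, one gets $\mathrm{Disc}_z q=\mathrm{Disc}_X g^{C_3}(m;X)=\Delta_m^2\neq 0$, so $q$ has three distinct roots $r_1,r_2,r_3\in\bar M$. A short computation gives $m(z^3-3z-1)-9z(z+1)=m\,q(z)-\Delta_m z(z+1)$ and $-\bigl(m(z^3+3z^2-1)+3(z^3-3z-1)\bigr)=-(m+3)q(z)+\Delta_m z(z+1)$, and since $\Delta_m\neq 0$, $q(0)=-1$ and $q(-1)=1$, both numerators are coprime to $q$. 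As each numerator has degree $\le 3=\deg q$, neither $\phi^{\pm}$ has a pole at $\infty$ (indeed $\phi^{+}(\infty),\phi^{-}(\infty)\in M$), so the polar divisor of $\phi^{\pm}$ on $\mathbb{P}^1_z$ is exactly the reduced divisor $D=\{r_1,r_2,r_3\}$ of degree $3$.

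Now take $z\in M$ with $n:=\phi^{\pm}(z)\in\mathcal{O}_M$. Because $\mathrm{div}_\infty\phi^{\pm}=D$, the condition $\phi^{\pm}(z)\in\mathcal{O}_{M,S}$ prevents the point $z\in\mathbb{P}^1(M)$ from reducing into $D$ modulo any place $v\notin S$, once $S$ has been enlarged to contain the finitely many places of bad reduction of a model of $U:=\mathbb{P}^1\setminus D$; equivalently, $z$ is an $S$-integral point of $U$. This $U$ has genus $0$ with three points at infinity, so Siegel's theorem in the genus-$0$ case (cf. \cite[Theorem 6.1]{Lan78}, \cite[Chapter 8, Section 5]{Lan83}, \cite[Theorem D.8.4]{HS00}) shows $U(\mathcal{O}_{M,S})$ is finite. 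Hence only finitely many admissible $z$ occur, a fortiori only finitely many $n=\phi^{\pm}(z)$; combining the two signs (and discarding the at most two $n$ with $n^2+3n+9=0$, for which $g^{C_3}(n;X)$ is inseparable) proves the corollary.

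The main obstacle is making the implication ``$n$ integral $\Rightarrow$ the parameter $z$ is $S$-integral on $U$'' rigorous, i.e. invoking Siegel's theorem in its formulation for $S$-integral points relative to a divisor rather than for integral points of an affine plane curve; an equivalent, more elementary route is to clear denominators in $n=\phi^{\pm}(z)$ and substitute $w:=(n-m)z$ (resp. $w:=(n+m+3)z$), which converts the relation into a monic cubic $w^3+\cdots=0$ with coefficients in $\mathcal{O}_M[m,n]$, so that $n\in\mathcal{O}_M$ forces $w\in\mathcal{O}_M$, and then to apply Siegel to the resulting affine plane cubic in $(w,n)$ — at the price of checking separately that the discriminant of its leading form does not vanish for the given $m$. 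Either way, the decisive point, exactly as in Corollary \ref{Siegel1}, is the non-vanishing of the discriminant $\mathrm{Disc}_z q=\Delta_m^2$ of the common denominator, which furnishes the three distinct points at infinity needed for Siegel's theorem.
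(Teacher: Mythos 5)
Your proof is correct and follows essentially the same route as the paper: the paper's own (very terse) proof likewise applies Siegel's theorem for genus-$0$ curves to the parametrization of Theorem \ref{thC3}, citing exactly the fact you isolate as decisive, namely that the discriminant of the denominator $z^3+(m+3)z^2+mz-1$ equals $(m^2+3m+9)^2\neq 0$, so that there are three distinct points at infinity. Your write-up merely supplies the details (coprimality of numerator and denominator, behaviour at $\infty$, the passage from integrality of $n$ to $S$-integrality of $z$) that the paper leaves implicit.
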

\begin{proof}
We may apply Siegel's theorem to Theorem \ref{thC3} because the discriminant of 
the denominator $mz(z+1)+z^3+3z^2-1$ $(=z^3+(m+3)z^2+mz-1)$ of the equalities 
in Theorem \ref{thC3} is given as $(m^2+3m+9)^2$. 
\end{proof}

\section{Some sextic generic polynomials}

Assume that char $k\neq 3$. 
Let $H_1$ and $H_2$ be subgroups of $\mathfrak{S}_3$. 
Let $k(s,t)$ be the rational function field over $k$ with variables $s,t$. 
We take a $k$-generic polynomial $f_3(\ba;X)\in k(s)[X]$ for $H_1$ with 
one parameter $s$ and a $k$-generic polynomial $f_3(\bb;X)\in k(t)[X]$ 
for $H_2$ also with one parameter $t$ where $\ba\in k(s)^3$ and $\bb\in k(t)^3$. 
Assume that $g^{(H_1,H_2)}(s,t;X):=F_2(\ba,\bb;X)$ has no multiple root. 
Then, by Theorem \ref{thgen}, $g^{(H_1,H_2)}(s,t;X)$ is a $k$-generic polynomial for 
$H_1\times H_2$ with two parameters $s,t$. 
Note that there exists no $\mathbb{Q}$-generic polynomial with 
one parameter except for the groups $\{1\},C_2,C_3,\mathfrak{S}_3$ 
(cf. \cite{BR97}, \cite{Led07}, \cite{CHKZ}). 
If we take 
$(H_1,H_2)\in \{(\mathfrak{S}_3,\mathfrak{S}_3)$, 
$(\mathfrak{S}_3,C_3)$, $(\mathfrak{S}_3,C_2)$, $(\mathfrak{S}_3,\{1\})$, $(C_3,C_2)\}$, 
then it follows from Theorem \ref{th} that 
$g^{(H_1,H_2)}(s,t;X)$ is irreducible over $k(s,t)$ because we have $L_\ba\cap L_\bb=k(s,t)$. 
Hence $H_1\times H_2$ can be regarded as a transitive subgroup of $\mathfrak{S}_6$ naturally. 

~\\
(1) The case of $(H_1,H_2)=(\mathfrak{S}_3,\mathfrak{S}_3)$. 
We take $\ba=(0,s,-s), \bb=(0,t,-t)$. 
Then we have $f_3(\ba;X)=X^3+sX+s, f_3(\bb;X)=X^3+tX+t$, 
$\Aa^3\Bb^2-27\Ab^3\Da=-729s^2t^2(4st+27s+27t)$, and 
\begin{align*}
g^{(\mathfrak{S}_3,\mathfrak{S}_3)}(s,t;X)\, :=\, 
\frac{1}{3^6}\, &\, F_2(0,s,-s,0,t,-t;3X)\\
\, =\, X^6&+\frac{2t}{s(4s+27)}X^4+\frac{t}{s^2(4s+27)}X^3\\
&+\frac{t^2}{s^2(4s+27)^2}X^2+\frac{t^2}{s^3(4s+27)^2}X+\frac{(s-t)t^2}{s^4(4s+27)^3}; 
\end{align*}
this is a $k$-generic polynomial for 
$\mathfrak{S}_3\times \mathfrak{S}_3\subset \mathfrak{S}_6$. 

~\\
(2) The case of $(H_1,H_2)=(\mathfrak{S}_3,C_3)$. 
We take $\ba=(0,s,-s), \bb=(t,-t-3,1)$. 
Then we have $f_3(\ba;X)=X^3+sX+s, f_3(\bb;X)=X^3-tX^2-(t+3)X-1$, 
$\Aa^3\Bb^2-27\Ab^3\Da=729s^2(t^2+3t+9)^2(t^2+3t+9+s)$, and 
\begin{align*}
g^{(\mathfrak{S}_3,C_3)}(s,t;X)\, :=\, 
F_2(&0,s,-s,t,-t-3,1;X)\\
\,=\,X^6&-\frac{6(t^2+3t+9)}{s(4s+27)}X^4-\frac{(2t+3)(t^2+3t+9)}{s^2(4s+27)}X^3\\
&+\frac{9(t^2+3t+9)^2}{s^2(4s+27)^2}X^2+\frac{3(2t+3)(t^2+3t+9)^2}{s^3(4s+27)^2}X\\
&+\frac{(t^2+3t+9)^2(4st^2+27t^2+12st+9s+81t+243)}{s^4(4s+27)^3}; 
\end{align*}
this is a $k$-generic polynomial for $\mathfrak{S}_3\times C_3\cong C_3\wr C_2\cong 
(C_3\times C_3)\rtimes C_2\subset \mathfrak{S}_6$. 

After specializing $(s,t)\mapsto (a,b)\in M^2$, we see that if $b^2+3b+9+a=0$ then 
$X^3+aX+a$ and $X^3-bX^2-(b+3)X-1$ have the same splitting field over $M$. 
More precisely, 
\begin{align*}
X^3-(b^2+3b+9)X-(b^2+3b+9)\quad \mathrm{and}\quad X^3-bX^2-(b+3)X-1 
\end{align*}
are Tschirnhausen equivalent over $M$. 
We also see that if $4ab^2+27b^2+12ab+9a+81b+243=0$ then 
$X^3+aX+a$ and $X^3-bX^2-(b+3)X-1$ have the same splitting field over $M$. 
Hence we see 
\begin{align*}
X^3-\frac{27(b^2+3b+9)}{(2b+3)^2}X-\frac{27(b^2+3b+9)}{(2b+3)^2}\quad 
\mathrm{and}\quad X^3-bX^2-(b+3)X-1 
\end{align*}
are Tschirnhausen equivalent over $M$. Here the equivalence is given by 
an affine form.  

~\\
(3) The case of $(H_1,H_2)=(\mathfrak{S}_3,C_2)$. 
We take $\ba=(0,s,-s), \bb=(0,-t,0)$. 
Then we have $f_3(\ba;X)=X^3+sX+s, f_3(\bb;X)=X(X^2-t)$, 
$\Aa^3\Bb^2-27\Ab^3\Da=729s^2t^3(4s+27)$, and 
\begin{align*}
g^{(\mathfrak{S}_3,C_2)}(s,t;X)\, &:=\, \frac{1}{3^6}\,F_2(0,s,-s,0,-t,0;3X)\\
\,&\ =\,X^6-\frac{2t}{s(4s+27)}X^4+\frac{t^2}{s^2(4s+27)^2}X^2+\frac{t^3}{s^4(4s+27)^3}; 
\end{align*}
this is a $k$-generic polynomial for 
$\mathfrak{S}_3\times C_2\cong D_6\subset \mathfrak{S}_6$ 
where $D_6$ is the dihedral group of order $12$. 

~\\
(4) The case of $(H_1,H_2)=(\mathfrak{S}_3,\{1\})$. 
Assume that char $k\neq 2$. 
We take $\ba=(0,s,-s), \bb=(0,-1,0)$; then 
$f_3(\ba;X)=X^3+sX+s, f_3(\bb;X)=X(X+1)(X-1)$ and 
$\Aa^3\Bb^2-27\Ab^3\Da=729s^2(4s+27)$. 
We obtain
\begin{align*}
g^{(\mathfrak{S}_3,\{1\})}(s,t;X)\, &:=\, \frac{1}{3^6}\, F_2(0,s,-s,0,-1,0;3X)\\
\,&\ =\,X^6-\frac{2}{s(4s+27)}X^4+\frac{1}{s^2(4s+27)^2}X^2+\frac{1}{s^4(4s+27)^3}; 
\end{align*}
this is a $k$-generic polynomial for $\mathfrak{S}_3\subset\mathfrak{S}_6$. 
We also obtain the following $k$-generic polynomial for $\mathfrak{S}_3$ 
with one parameter $s$: 
\begin{align*}
h^{\mathfrak{S}_3}(s;X)\,&:=\,(s(4s+27))^6\, 
g^{(\mathfrak{S}_3,\{1\})}\Bigl(s,t;\frac{X}{s(4s+27)}\Bigr)\\
\, &\ =\,X^6-2s(4s+27)X^4+s^2(4s+27)^2X^2+s^2(4s+27)^3. 
\end{align*}
Two polynomials $f_3(\ba;X)=X^3+sX+s$ and $h^{\mathfrak{S}_3}(s;X)$ 
have the same splitting field over $k(s)$. 

~\\
(5) The case of $(H_1,H_2)=(C_3,C_2)$. 
We take $\ba=(s,-s-3,1), \bb=(0,-t,0)$. 
Then we have $f_3(\ba;X)=X^3-sX^2-(s+3)X-1, f_3(\bb;X)=X(X^2-t)$, 
$\Aa^3\Bb^2-27\Ab^3\Da=-729t^3(s^2+3s+9)^2$, and 
\begin{align*}
g^{(C_3,C_2)}(s,t;X)\, &:=\, F_2(s,-s-3,1,0,-t,0;X)\\
\,&\ =\,X^6-\frac{6t}{s^2+3s+9}X^4+\frac{9t^2}{(s^2+3s+9)^2}X^2
-\frac{(2s+3)^2t^3}{(s^2+3s+9)^4}; 
\end{align*}
this is a $k$-generic polynomial for $C_3\times C_2\cong C_6\subset\mathfrak{S}_6$. \\

In the case of char $k=3$, we should use $F_0(\ba,\bb;X)$ instead of $F_2(\ba,\bb;X)$ 
because of the result in Subsection \ref{subsech3}. 
Here we give only $g^{(H_1,H_2)}(s,t;X):=F_0(\ba,\bb;X)$ 
for each $(H_1,H_2)$ in the case of char $k=3$: 
\begin{align*}
g^{(\frak{S}_3,\frak{S}_3)}(1/s,t;X)\,&=\,X^6-tX^4-tX^3+t^2X^2-t^2X-t^2(st-1),\\
g^{(\frak{S}_3,C_3)}(1/s,t;X)\,&=\,X^6+tX^5+t(t+1)X^4+(st^3-t^2+1)X^3\\
&\hspace*{5mm}-t(st^3-t+1)X^2-t(st^3+1)X+s^2t^6+st^4-st^3+1,\\
g^{(\frak{S}_3,C_2)}(1/s,t;X)\,&=\,X^6-tX^4+t^2X^2+st^3,\\
g^{(\frak{S}_3,\{1\})}(1/s,t;X)\,&=\,X^6-X^4+X^2+s,\\
g^{(C_3,C_2)}(1/s,t;X)\,&=\,X^6+\frac{t^2}{s^4+s^2+1}X^2+\frac{s^2t^3}{s^8+s^4+1}.\\
\end{align*}

\begin{acknowledgment}
We would like to thank the referee for suggesting many improvements and corrections 
to the manuscript. 
We also thank him/her for providing an improved statement and an alternative proof 
of Lemma \ref{lemnoMult}. 
\end{acknowledgment}


\vspace*{1cm}
{\small
\hspace*{-0.6cm}
\begin{tabular}{ll}
Akinari HOSHI & Katsuya MIYAKE\\
Department of Mathematics & Department of Mathematics\\
Rikkyo University  & School of Fundamental Science and Engineering\\
3--34--1 Nishi-Ikebukuro Toshima-ku & Waseda University\\
Tokyo, 171-8501, Japan & 3--4--1 Ohkubo Shinjuku-ku\\
E-mail: \texttt{hoshi@rikkyo.ac.jp} & Tokyo, 169--8555, Japan\\
& E-mail: \texttt{miyakek@aoni.waseda.jp}
\end{tabular}
}

\end{document}